 \def\@textbottom{\vskip \z@ \@plus 17pt}
 \let\@texttop\relax
\newtheorem{thm}{Theorem}[section]
\newtheorem{cor}[thm]{Corollary}
\newtheorem{lem}[thm]{Lemma}
\newtheorem{prop}[thm]{Proposition}
\theoremstyle{definition}
\newtheorem{defn}[thm]{Definition}
\theoremstyle{remark}
\newtheorem{rem}[thm]{Remark} 
\theoremstyle{definition}
\newtheorem{assumption}[thm]{Assumption}
\numberwithin{equation}{section}
\newenvironment{tenumerate}[1][]
  {\enumerate[label=\textup(\alph*\textup),ref=(\alph*),#1]}
  {\endenumerate}
\DeclarePairedDelimiter{\abs}{\lvert}{\rvert}
\DeclarePairedDelimiter{\norm}{\lVert}{\rVert}
\DeclarePairedDelimiter{\bra}{(}{)}
\DeclarePairedDelimiter{\pra}{[}{]}
\DeclarePairedDelimiter{\set}{\{}{\}}
\DeclarePairedDelimiter{\skp}{\langle}{\rangle}
\DeclareMathAlphabet{\mathup}{OT1}{\familydefault}{m}{n}
\newcommand{\dx}[1]{\mathop{}\!\mathup{d} #1}
\newcommand{\vol}{\mathup{vol}}
\newcommand{\cC}{\ensuremath{\mathcal C}}
\newcommand{\cG}{\ensuremath{\mathcal G}}
\newcommand{\cI}{\ensuremath{\mathcal I}}
\newcommand{\cM}{\ensuremath{\mathcal M}}
\newcommand{\cP}{\ensuremath{\mathcal P}}
\newcommand{\cX}{\ensuremath{\mathcal X}}
\newcommand{\Leb}{\ensuremath{{L}}}
\newcommand{\SobH}{\ensuremath{{\dot H}}}
\newcommand{\N}{{\mathbb N}}
\newcommand{\R}{{\mathbb R}}
\newcommand{\T}{{\mathds T}^d_L}
\newcommand{\Z}{{\mathds Z}}
\newcommand{\bP}{{\mathbb P}}
\newcommand{\eps}{{\varepsilon}}
\newcommand{\intT}[1]{\int_{\T} #1 \dx{x}}
\DeclareMathOperator{\supp}{supp}
\DeclareMathOperator{\Lip}{Lip}
\DeclareMathOperator{\epi}{epi}
\renewcommand{\tilde}{\widetilde}
\renewcommand{\bar}{\overline}
\renewcommand{\eps}{\varepsilon}
\title[A mountain pass theorem for the McKean--Vlasov energy]{Barriers of the McKean--Vlasov energy via 
a mountain pass theorem in the space of probability measures}
\author{Rishabh S. Gvalani}
\address{Department of Mathematics, Imperial College London, London SW7 2AZ}
\email{rishabh.gvalani14@imperial.ac.uk}
\author{Andr\'e  Schlichting}
\address{Institut f\"ur Angewandte Mathematik, Universit\"at Bonn}
\email{schlichting@iam.uni-bonn.de}
\thanks{RSG is funded by an Imperial College President's PhD Scholarship, partially through EPSRC Award Ref. 1676118.
AS is supported by the Deutsche Forschungsgemeinschaft (DFG, German Research Foundation) under Germany's Excellence Strategy EXC 2047
-- 390685813, the \emph{Hausdorff Center for Mathematics}, as well as the Collaborative Research Center 1060 -- 211504053, \emph{The Mathematics of Emergent Effects} at the Universität Bonn. 
RSG also acknowledges the hospitality of RWTH Aachen University. AS acknowledges the hospitality of Imperial College London. 
Part of this work was carried out at the workshop {\emph ``Nonlocal differential equations in collective behaviour''} held at the American Institute of Mathematics, San Jos\'e and at the {\emph ``Junior Trimester Programme in Kinetic Theory''} held at the Hausdorff Research Institute for Mathematics, Bonn. RSG and AS are grateful to both institutes for their hospitality. }
\date{Submitted May, 28 2019. Revised June, 22 2020. arXiv: 1905.11823}
\keywords{Free energy barrier, large deviations, McKean--Vlasov equation, mountain pass theorem, optimal transport, space of probability measures.}
\begin{document}
\begin{abstract}
We show that the empirical process associated with a system of weakly interacting diffusion processes exhibits a form of noise-induced metastability. The result is based on an analysis of the associated McKean--Vlasov free energy, which, for suitable attractive interaction potentials, has at least two distinct global minimisers at the critical parameter value $\beta=\beta_c$. On the torus, one of these states is the spatially homogeneous constant state, and the other is a clustered state.  We show that a third critical point exists at this value. As a result, we obtain that the probability of transition of the empirical process from the constant state scales like $\exp(-N \Delta)$, with $\Delta$ the energy gap at $\beta=\beta_c$. The proof is based on a version of the mountain pass theorem for lower semicontinuous and $\lambda$-geodesically convex functionals on the space of probability measures $\cP_2(M)$ equipped with the $2$-Wasserstein metric, where $M$ is a complete, connected, and smooth Riemannian manifold. 
\end{abstract}
\maketitle

\section{Introduction}
In recent years, a lot of progress has been made in understanding the convergence of interacting particle systems to their hydrodynamic or mean-field limits at the level of the convergence of gradient flows (cf.~\cite{ADPZ11,ADPZ13, DPZ13,Fat16,EFLS16,FS16,KJZ18}). These limits are described by dissipative evolution equations which are driven by some macroscopic free energy with respect to some metric. This gradient flow structure allows for a characterisation of the stationary states of the system in terms of critical points and minimisers of the free energy. Hence, the free energy landscape and the underlying metric encode some of the system's dynamical properties.  In many applications, the free energy is usually a lower semicontinuous (l.s.c) function  with the space of probability measures $\cP_2(M)$ as its domain. Here $\mu \in \cP_2(M)$ represents the distribution of particle positions on some base manifold $M$. The appropriate metric for the gradient flow is usually the $2$-Wasserstein metric and its variants. For example, in~\cite{BB18}, the authors derive a local mean-field model as the gradient flow of the macroscopic free energy with respect to a modified Wasserstein metric. 

For macroscopic models originating from interacting particle systems, the free energy can exhibit multiple local minima corresponding to distinguished stationary states of the macroscopic system. In this case, one may want to understand typical transition times and transition states between two such distinct states in the presence of noise. A typical example of this is a classical particle moving in $\R^d$ along the gradient of some potential $V \in C^2(\R^d;\R)$, i.e. 
\begin{equation}\label{eq:ODE}
\dot{x}(t)= -\nabla V (x) \, , 
\end{equation}
with $x(0)= x_0 \in \R^d$. Let us assume that $V$ has exactly two distinct global minima $x_1,x_2 \in \R^d$, which are also the stationary points of~\eqref{eq:ODE}. If one considers these to be the states of interest, then a relevant question is how does the particle transition from one to the other under the influence of noise. To understand this, one considers the stochastic differential equation (SDE)
\begin{align}\label{eq:SDE}
dX_t = - \nabla V(X_t) \dx{t} + \sqrt{2\beta^{-1}} \dx{B_t} \, , 
\end{align}
where $B_t$ is a $\R^d$-valued Wiener process and $\beta>0$ is a parameter representing the strength of the noise in the system. In the setting of the above SDE, the question can be reframed as follows: given $X_0=x_1$, what is the probability that in some finite time $T>0$, we have that $X_T=x_2$. This question is answered, at least for $\beta \gg 1$, by the Freidlin--Wentzell theorem. In particular, it tells us that the family of processes $\set{X_t^\beta} \in C([0,T];\R)$ with $X_0=x_1$ satisfy a large deviations principle with good rate function $S: C([0,T];\R^d) \to \R \cup\set{+\infty}$ given by
\begin{align}
S(f):= 
\frac{1}{4} \int_0^T \abs[\big]{\dot{f}(t)+ \nabla V(f(t))}^2 \dx{t} \, ,
\end{align} 
whenever the above integral is finite and $+\infty$ otherwise. As a consequence of the above result, we have that, for any closed and measurable $\Gamma \subset C([0,T];\R^d)$
\begin{align}
\limsup_{\beta \to +\infty}\beta^{-1} \log\bP\bra{X_t^\beta \in \Gamma} \leq - \inf_{f \in \Gamma} S(f) \, .
\end{align} 
If we pick $\Gamma= \set{f \in C([0,T];\R^d): f(0)=x_1, f(T)=x_2}$, we obtain an upper bound on the probability that the process reaches $x_2$ given that it starts at $x_1$. Setting $T^*= \arg \max_{t \in [0,T]} (V(f(t))-V(f(0)))$, we can obtain the following lower bound  for~$f \in \Gamma$,
\begin{align}
S(f)&= \frac{1}{4} \int_0^{T} \abs{\dot{f}(t)+ \nabla V(f(t))}^2 \dx{t}  \\
&= \frac{1}{4} \int_0^{T^*} \abs{\dot{f}(t)- \nabla V(f(t))}^2 \dx{t}  + \int_0^{T^*} \dot{f}(t)\cdot \nabla V(f(t)) \dx{t} \\
&\quad + \frac{1}{4} \int_{T^*}^T \abs{\dot{f}(t)+ \nabla V(f(t))}^2 \dx{t} \\
&\geq V(f(T^*))-V(f(0))  \geq \inf_{f \in \Gamma} \bra*{V(f(T^*))-V(f(0))} =:c- V(f(0))  \, .
\end{align}
It turns out that $c>0$ is in fact a critical value of $V$, i.e.  there exists $x_3 \in \R^d$ such that $V(x_3)=c$ and $\nabla V(x_3)=0$. The reader will recognise this as the finite-dimensional version of the well-known mountain pass theorem. Setting $\Delta:= V(x_3)-V(x_1)$, we see that for $\beta$ sufficiently large
\begin{align}
\bP(X_t^\beta \in \Gamma) \lesssim \exp\bra[\big]{-\beta \, \Delta} \, \label{eq:ldpi}. 
\end{align}
Thus, the probability of the process reaching the new phase/state in time $T>0$ goes exponentially with $\beta$ with the rate given by the difference between the energies of the saddle point and the initial phase.  Thus, we can see that the process finds the path of least resistance to reach the new phase in agreement with the fundamental tenet of large deviations theory that \emph{``an unlikely event will happen in the most likely of the possible unlikely ways.''} These transitions correspond to the phenomenon of noise-induced metastability, i.e. the process is stable around $x_1$ for $\beta \gg 1$, but there is an exponentially small probability of it transitioning to $x_2$. 

The purpose of this paper is to obtain results in a similar flavour but in an infinite-dimensional setting. Specifically, we are interested in understanding how related phenomena, i.e. noise-induced transitions, occur in systems governed by the Wasserstein gradient flow of some free energy $I$, especially those that arise as mean-field limits of interacting particle systems. We consider the following system of $N$ interacting SDEs on $\T$ (the $d$-dimensional torus of side length $L>0$)
\begin{align}
dX_t^i &= -\frac{1}{N}\sum_{j=1}^{N} \nabla W(X_t^i-X_t^j) \dx{t} + \sqrt{2 \beta^{-1}} \dx{B_t^i} \,  \\
\textrm{Law}(\bar{X}^N_0)&=\prod_{i=1}^N \nu(x_i) \quad \bar{X}_t^N= \bra*{X_t^1,\dots, X_t^N}
\end{align}
where $\beta>0$ is a parameter, $W \in C^2(\T)$ is an interaction potential which is even along every coordinate, and $B_t^i$ are $\T$-valued independent
Wiener processes. 

Let $\mu^{(N)}(t):=N^{-1} \sum_{i=1}^N \delta_{X_i^t}$, then it is well known (cf.~\cite{sznitman1991topics}) that $\mu^{(N)}(t)$ as a measure-valued random variable converges in law to $\mu=\mu(x,t)$ for each $t>0$, where $\mu$ is a weak solution of the following PDE
\begin{align}\label{mvintro}
\partial_t \mu = \nabla \cdot \bra[\big]{\mu \nabla(\beta^{-1} \log \mu + W \star \mu) } \qquad\text{with}\qquad \mu(x,0)=\nu(x) \, .
\end{align}
The above PDE is commonly referred to as the McKean--Vlasov equation and can be rewritten as $W_2$-gradient flow 
\begin{align}\label{eq:GF}
\partial_t \mu = \nabla \cdot\bra*{\mu \nabla \frac{\delta I}{\delta \mu} } \, ,
\end{align}
where $I : \cP(\T) \to \R \cup \set{+\infty}$ is the associated free energy. Its domain is the space of  absolutely continuous measures and for those it is given by
 \begin{align}\label{eq:I}
I(\mu) = \beta^{-1} \int \! \log \bra*{\frac{\dx{\mu}}{\dx{x}}} \, \dx{\mu} + \frac{1}{2} \iint \! W(x-y) \,  \dx{\mu}(y) \, \dx{\mu}(x) \, ,
 \end{align}
where $\frac{\dx{\mu}}{\dx{x}}$ denotes the density of $\mu$ with respect to the Lebesgue measure $\dx{x}$ on $\T$.  
The first term in~\eqref{eq:I} is referred to as the entropy and the second as the interaction energy. The function $I$ is referred to as the free energy of the system. The balance between entropy and interaction energy in terms of $\beta$ determines what the minimisers of $I$ look like. For $\beta$ smaller than some critical value $\beta_c$, the normalised Lebesgue measure, $\mu^L(\dx{x})=L^{-d}\dx{x}$ is the unique minimiser of the free energy. Above the value, $\beta_c$, a new minimiser of the free energy, which is not $ \mu^L $, emerges. The change in the structure of the set of minimisers of $I$ is called a phase transition and is observed in many models from the physical sciences~\cite{LP66, Sin82,Dawson1983,Shiino1987,GomesPavliotis2018, FV18}.

This operator $\nabla \cdot (\mu \nabla \frac{\delta}{\delta \mu}(\cdot))$ can be formally thought of as a gradient in the space of probability measures on $\T$ equipped with the $2$-Wasserstein mass transportation distance, which is defined as follows
 \[
W_2^2(\mu,\nu):= \inf_{\pi \in \Pi(\mu, \nu)} \int_{\T \times \T }d_{\T}(x,y)^2 \dx{\pi}(x,y) \, ,
 \]
where $\Pi(\mu,\nu)$ is the set of all couplings between $\mu$ and $\nu$ and $d_{\T}(\cdot,\cdot)$ is the distance on~$\T$. We note that $\cP(\T)$ equipped with $W_2$ is a complete, separable metric space. For $\mu,\nu$ absolutely continuous with respect to $\dx{x}$, the definition of the metric can be recast into the form discussed in~\thref{thm:monge}. This notion of a gradient flow can be made rigorous and is an extremely active field of research. However, the present work relies on quite classical results (cf.~\cite{cordero01,mccann01,mccann1997convexity,ambrosio2008gradient}). Indeed, the solutions of the McKean--Vlasov PDE are curves of maximal slope of the McKean--Vlasov energy $I$ with respect to $W_2$ (see~\cite{DS10}). Comparing this with the toy model discussed further up in the introduction, we see that the PDE has a gradient structure in $W_2$ and so the functional $I$ will play a similar role to the potential $V$ in~\eqref{eq:ODE}. The distinct phases/states are then characterised by the global minima of the functional~$I$ over $\cP(\T)$. The role of the SDE in~\eqref{eq:SDE} is then played by the empirical process $\mu^{(N)}$ and that of the parameter $\beta$ is played by $N$. In this context, we also refer to some recent progress in the understanding of singular SPDEs related to the fluctuations of the empirical process around its mean-field limit~\cite{FehrmanGess2019,Renesse2018,Zimmer2018,Zimmer2018b}.

Understanding such noise-induced transitions requires two ingredients: a version of the mountain pass theorem in the space of probability measures $\cP_2(M)$ equipped with the Wasserstein metric and an appropriate large deviations principle for the underlying particle system. We focus on the first ingredient noting that the second ingredient is usually application-specific. Our main result in this direction is as follows.
\begin{thm}\thlabel{thm:mpwi}
Assume $M$ is a complete, connected, and smooth Riemannian manifold. Let $I : \cP_2(M) \to \R \cup \set{+\infty}$ be a proper, l.s.c, and $\lambda$-geodesically convex functional. Suppose $\mu,\nu \in \cP_2(M) \cap D(I)$, $\Gamma$ is the set of all continuous curves $\gamma:[0,1] \to \cP_2(M)$ (where $\cP_2(M)$ is equipped with the 2-Wasserstein metric, $W_2$) with $\gamma(0)=\mu$ and $\gamma(1)=\nu$, and the function $\Upsilon: \Gamma \to \R$ is defined by:
\[
\Upsilon(\gamma)=\sup_{t \in[0,1]} I(\gamma(t)) \,.
\]
Let $c=\inf_{\gamma \in \Gamma} \Upsilon(\gamma)$ and $c_1=\max\set{I(\mu),I(\nu)}$. If $c>c_1$ and $I$ satisfies $\mathrm{(\hyperlink{MPS}{MPS})}$ (see Assumption~\eqref{ass:MPS}), then 
$c$ is a critical value of $I$, that is there exists a $\eta \in \cP_2(M)$ with $I(\eta)=c$ such that $\abs{\partial I}(\eta)= \abs{d I}(\eta)=0$ (see~\thref{def:rp} and~\thref{def:ms}). 
\end{thm}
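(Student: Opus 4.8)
The plan is to combine Ekeland's variational principle on the space of curves $\Gamma$ with the gradient flow $(\Phi_r)_{r\ge0}$ of $I$, which is well defined and depends continuously on its initial datum because $I$ is proper, l.s.c.\ and $\lambda$-geodesically convex on the complete space $(\cP_2(M),W_2)$, cf.\ \cite{ambrosio2008gradient}. I shall use that $r\mapsto I(\Phi_r\mu)$ is non-increasing and absolutely continuous with $\tfrac{d}{dr}I(\Phi_r\mu)=-\abs{\partial I}^2(\Phi_r\mu)$, that the metric speed of $r\mapsto\Phi_r\mu$ equals $\abs{\partial I}(\Phi_r\mu)$, and the identity $\abs{\partial I}(\mu)=\sup_{\nu\neq\mu}\bigl(I(\mu)-I(\nu)+\tfrac\lambda2 W_2^2(\mu,\nu)\bigr)^+/W_2(\mu,\nu)$, valid for $\lambda$-convex $I$, which makes $\abs{\partial I}$ lower semicontinuous and equal to $\abs{dI}$. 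The whole argument reduces to producing a \emph{Palais--Smale sequence at level $c$}, i.e.\ $(\mu_n)\subset D(I)$ with $I(\mu_n)\to c$ and $\abs{\partial I}(\mu_n)\to0$: given one, $\mathrm{(MPS)}$ yields $\mu_{n_k}\to\eta$ in $W_2$; lower semicontinuity of $I$ gives $I(\eta)\le c$, the slope identity tested at $\nu=\eta$ gives $I(\mu_{n_k})-I(\eta)\le\abs{\partial I}(\mu_{n_k})W_2(\mu_{n_k},\eta)+\tfrac{|\lambda|}2W_2^2(\mu_{n_k},\eta)\to0$ whence $I(\eta)=c$, and lower semicontinuity of $\abs{\partial I}$ forces $\abs{\partial I}(\eta)=\abs{dI}(\eta)=0$.

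Suppose, then, that no Palais--Smale sequence at level $c$ exists; then there are $\eps_0,\delta_0>0$ with $\abs{\partial I}(\mu)\ge\delta_0$ whenever $\abs{I(\mu)-c}\le\eps_0$, and shrinking $\eps_0$ we may assume $\eps_0<c-c_1$, so that $\mu$ and $\nu$ sit strictly below this band. Equip $\Gamma$ with $d_\infty(\gamma_1,\gamma_2):=\sup_tW_2(\gamma_1(t),\gamma_2(t))$; then $(\Gamma,d_\infty)$ is complete, $\Upsilon$ is l.s.c.\ and bounded below by $c_1$, and $\Upsilon$ is finite on the $W_2$-geodesic from $\mu$ to $\nu$ by $\lambda$-convexity, so $\Upsilon$ is proper and $c\in\R$. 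For each small $\eps>0$, Ekeland's variational principle gives $\gamma_\eps\in\Gamma$ with $\Upsilon(\gamma_\eps)\le c+\eps^2$ and $\Upsilon(\sigma)>\Upsilon(\gamma_\eps)-\eps\,d_\infty(\sigma,\gamma_\eps)$ for all $\sigma\neq\gamma_\eps$.

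Since $I$ is only l.s.c.\ the map $t\mapsto I(\gamma_\eps(t))$ need not be continuous, which would obstruct building a continuous deformed curve, so I first replace $\gamma_\eps$ by a geodesic polygon $\gamma_\eps'$ obtained by joining finitely many of its points $\gamma_\eps(t_i)$ by $W_2$-geodesics: along each piece $I$ is a $\lambda$-convex, hence continuous, function of arclength, and $\lambda$-convexity bounds $I(\gamma_\eps'(t))\le\max_iI(\gamma_\eps(t_i))+\tfrac{|\lambda|}{8}\max_iW_2^2(\gamma_\eps(t_i),\gamma_\eps(t_{i+1}))$, so for a fine enough partition $I\circ\gamma_\eps'$ is continuous, $\Upsilon(\gamma_\eps')\le c+2\eps^2$, and $d_\infty(\gamma_\eps',\gamma_\eps)\le\eps$. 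Set $b:=c-\eps_0$, $H:=\{t:I(\gamma_\eps'(t))\ge c-\eps_0/2\}$ (compact, disjoint from $\{0,1\}$), $U:=\{t:I(\gamma_\eps'(t))>b\}$ (open, $\supseteq H$, disjoint from $\{0,1\}$), pick a Urysohn function $\phi\colon[0,1]\to[0,1]$ with $\phi\equiv1$ on $H$ and $\phi\equiv0$ off $U$, let $T_b(\mu):=\inf\{r\ge0:I(\Phi_r\mu)\le b\}$, and define
\[
\sigma(t):=\Phi_{T_b(\gamma_\eps'(t))\,\phi(t)}\bigl(\gamma_\eps'(t)\bigr).
\]
Because $\abs{\partial I}\ge\delta_0$ on $\{\abs{I-c}\le\eps_0\}$, along $\Phi$ the energy decreases at rate $\ge\delta_0^2$ as long as it lies in that band; hence $T_b(\gamma_\eps'(t))\le2\eps_0/\delta_0^2=:\tau_0$, the flow crosses the level $b$ transversally, and — this is the delicate point — $\sigma$ is a continuous curve with $\sigma(0)=\mu$, $\sigma(1)=\nu$, so $\sigma\in\Gamma$. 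By the energy identity and Cauchy--Schwarz, $W_2(\sigma(t),\gamma_\eps'(t))\le\sqrt{\tau_0}\bigl(I(\gamma_\eps'(t))-I(\sigma(t))\bigr)^{1/2}\le\sqrt{2\eps_0\tau_0}=:C_0$, a constant independent of $\eps$, while on $H$ the flow is run down to energy $b<c-\eps_0/2$ and off $H$ the energy only decreases from a value $<c-\eps_0/2$; hence $\Upsilon(\sigma)\le c-\eps_0/2$.

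Combining these facts with $\Upsilon(\gamma_\eps)\ge c$ and Ekeland's inequality,
\[
\tfrac{\eps_0}{2}\le\Upsilon(\gamma_\eps)-\Upsilon(\sigma)\le\eps\,d_\infty(\sigma,\gamma_\eps)\le\eps\,(C_0+\eps),
\]
which is impossible for $\eps$ small. Hence a Palais--Smale sequence at level $c$ does exist, and the reduction above finishes the proof. The main obstacle is the construction in the previous paragraph: one must simultaneously (i) make the deformed curve $\sigma$ genuinely continuous — which forces the passage to a geodesic polygon, so that $I$ varies continuously along the curve, and uses the transversal crossing of the level $b$ guaranteed by the slope lower bound — and (ii) keep the displacement $d_\infty(\sigma,\gamma_\eps')$ bounded by a constant that does not degenerate as $\eps\to0$, for which it is essential to stop the flow at the fixed energy level $b$ rather than run it for a fixed time.
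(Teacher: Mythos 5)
Your argument is a genuinely different route from the paper's: you run a classical quantitative-deformation proof (Ekeland's principle on the path space plus a downward deformation along the gradient flow of $I$, stopped at a fixed energy level), whereas the paper never touches a flow — it passes to the continuous epigraph extension $\cG_I$ on $\epi(I)$, shows the mountain pass levels of $I$ and $\cG_I$ coincide via a geodesic-polygon lift, applies Katriel's metric mountain pass theorem, and then uses the slope computation on the epigraph (\thref{lem:slopeaway}) together with the equivalence $\abs{dI}=\abs{\partial I}$ to land back on the graph. Your reduction of the problem to producing a Palais--Smale sequence is correct (the duality formula for the slope of $\lambda$-convex functionals and its lower semicontinuity do hold at this generality, since they only use geodesics), and the Ekeland/polygon bookkeeping is sound.

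The genuine gap is the very first ingredient: the gradient flow $(\Phi_r)_{r\ge 0}$ with continuous dependence on the initial datum. For a proper, l.s.c.\ functional on $\cP_2(M)$ that is merely $\lambda$-convex \emph{along geodesics}, with $M$ an arbitrary complete connected Riemannian manifold, the existence of a well-posed semigroup satisfying the energy identity and the $\lambda$-contraction (or any continuity in the initial point) is \emph{not} a consequence of \cite{ambrosio2008gradient}: the generation results there require either convexity along \emph{generalized} geodesics or additional structural/compactness hypotheses, and it is known that geodesic $\lambda$-convexity alone does not generate an EVI flow in a general geodesic space. Without continuous dependence, your map $t\mapsto\sigma(t)=\Phi_{T_b(\gamma_\eps'(t))\phi(t)}(\gamma_\eps'(t))$ has no claim to continuity, and the deformation collapses; this is precisely the obstruction that the metric critical point theory used in the paper (Degiovanni--Marzocchi, Katriel) is designed to bypass, since it only ever requires single geodesics and the convexity inequality along them. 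A secondary, smaller issue is that even granting an EVI flow you assert rather than prove the continuity of the stopping time $T_b$ (which needs joint continuity of $(r,\mu)\mapsto I(\Phi_r\mu)$, i.e.\ the regularizing estimate, on top of the transversal crossing). Your proof would become correct if you either added a hypothesis guaranteeing the flow (e.g.\ convexity along generalized geodesics, or a lower curvature bound making $\cP_2(M)$ an Alexandrov space), or replaced the flow by the discrete regularity mappings of \thref{def:rp} — at which point you are essentially rebuilding the paper's argument.
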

The proof utilises the notion of the weak metric slope $\abs{dI}$ first introduced in~\cite{katriel94}. The main advantage over previous results in this direction is that we can apply the result to l.s.c functionals on $\cP_2(M)$ as long as they are $\lambda$-convex by working with the extension of the function to its epigraph based on ideas discussed by Degiovanni and Marzocchi~\cite{degiovanni94} originating from work in~\cite{DGMT80}. In fact for $\lambda$-convex functionals one can identify the usual (strong) metric slope $\abs{\partial I}$ and $\abs{dI}$. We focus on the case in which the metric is $W_2$ although the results generalise for $W_p$ or other variants of the metric.

Our first result shows that the abstract mountain pass \thref{thm:mpwi} can be applied to the McKean--Vlasov free energy $I$, as defined in~\eqref{eq:I}, after verifying the necessary regularity assumptions.
\begin{thm}\thlabel{intro:mpf}
Assume $W$ and $\beta$ are such that there exist two measures $\mu,\nu \in \cP(\T)$ such that $\mu$ is a strict local minimum of the McKean--Vlasov free energy $I$ (cf.~\eqref{eq:I}) and $I(\nu)\leq I(\mu)$. Then, there exists $\mu^* \in \cP(\T)$, distinct from $\mu$ and $\nu$, such that $\abs{\partial I}(\mu^*)=\abs{d I}(\mu^*)=0$. Additionally, $I(\mu^*)=c$, where $c$ is given by
\begin{align}
c= \inf\limits_{\gamma \in \Gamma} \sup_{t \in[0,1]} I(\gamma(t)) \, ,
\end{align}
where $\Gamma= \set*{C\bra[\big]{[0,1]; \cP(\T)}: \gamma(0)=\mu, \gamma(1)=\nu}$.  
\end{thm}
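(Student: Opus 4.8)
The plan is to obtain the statement directly from the abstract mountain pass \thref{thm:mpwi}, applied with the base manifold $M=\T$; the whole proof then reduces to checking the hypotheses of that theorem one by one. Observe first that $\T$, being a compact, connected, flat Riemannian manifold, is in particular complete, connected, and smooth, as required. Moreover, since $\T$ is compact, $\cP_2(\T)=\cP(\T)$ and $W_2$ metrises the topology of weak convergence on it, so $(\cP(\T),W_2)$ is itself a \emph{compact} metric space. This compactness will be the workhorse throughout.

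Next I would verify that $I$ is proper, lower semicontinuous, and $\lambda$-geodesically convex. Properness is immediate, since $I(\mu^L)<+\infty$. Writing $I=\beta^{-1}\cU+\cW$ as in~\eqref{eq:I}, with $\cU$ the entropy and $\cW$ the interaction energy, the functional $\cW$ is continuous along weakly (equivalently $W_2$-) convergent sequences because $W\in C^2(\T)$ is bounded and continuous, while $\cU$ is the classical (lower semicontinuous) entropy functional on $\cP(\T)$; hence $I$ is l.s.c. For $\lambda$-geodesic convexity I would invoke McCann's theorem: on the flat torus the entropy $\cU$ is geodesically convex (the Ricci curvature vanishes), and since $\Hess W$ is continuous on the compact manifold $\T$ it is bounded below, say $\Hess W\ge \lambda_W\,\mathrm{Id}$, which makes $\cW$ a $\lambda_W$-geodesically convex functional; summing, $I$ is $\lambda$-geodesically convex with $\lambda=\lambda_W$. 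Finally, $\mu\in D(I)$ because a strict local minimum has finite energy and $\nu\in D(I)$ because $I(\nu)\le I(\mu)<+\infty$; note that necessarily $\nu\ne\mu$, the case $\nu=\mu$ being trivial (then already $c=c_1$ and \thref{thm:mpwi} does not apply).

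The only genuinely geometric point is the gap inequality $c>c_1$, where $c_1=\max\set{I(\mu),I(\nu)}=I(\mu)$, and here strict local minimality enters. Let $U$ be a neighbourhood of $\mu$ with $I(\eta)>I(\mu)$ for every $\eta\in U\setminus\set{\mu}$. Since $I(\nu)\le I(\mu)$ and $\nu\ne\mu$, we have $\nu\notin U$, so after shrinking we may fix $r>0$ with the closed ball $\bar B_r(\mu)=\set{\eta:W_2(\eta,\mu)\le r}$ contained in $U$, and hence $\nu\notin \bar B_r(\mu)$. The sphere $S_r=\set{\eta:W_2(\eta,\mu)=r}$ is a closed, hence compact, subset of $\cP(\T)$ contained in $U\setminus\set{\mu}$, so the l.s.c function $I$ attains its minimum over $S_r$ and $m:=\min_{S_r}I>I(\mu)$. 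Now $\Gamma$ is nonempty (for instance, linear interpolation gives a $W_2$-continuous curve from $\mu$ to $\nu$), and for any $\gamma\in\Gamma$ the map $t\mapsto W_2(\gamma(t),\mu)$ is continuous with value $0$ at $t=0$ and value $>r$ at $t=1$, so the intermediate value theorem gives $\gamma(t^*)\in S_r$ for some $t^*\in(0,1)$, whence $\Upsilon(\gamma)\ge I(\gamma(t^*))\ge m$. Taking the infimum over $\gamma$ yields $c\ge m>I(\mu)=c_1$.

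It remains to verify $\mathrm{(MPS)}$, and this is where I expect the only real work. Given a sequence $(\mu_n)\subset\cP(\T)$ with $I(\mu_n)\to c$ and $\abs{dI}(\mu_n)\to 0$, compactness of $(\cP(\T),W_2)$ yields a subsequence with $\mu_n\to\mu^*$ in $W_2$; since $W$ is bounded, $\cW(\mu_n)\to\cW(\mu^*)$, so the entropies $\cU(\mu_n)$ stay bounded and, by lower semicontinuity of $\cU$, $I(\mu^*)\le c$ (in particular $\mu^*\in D(I)$). Because $I$ is $\lambda$-geodesically convex one has $\abs{dI}=\abs{\partial I}$ and the strong metric slope $\abs{\partial I}$ is lower semicontinuous, so $\abs{\partial I}(\mu^*)=0$; this gives the precompactness of Palais--Smale sequences demanded by $\mathrm{(MPS)}$. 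With all hypotheses in place, \thref{thm:mpwi} produces $\mu^*\in\cP(\T)$ with $I(\mu^*)=c$ and $\abs{\partial I}(\mu^*)=\abs{dI}(\mu^*)=0$, and $\mu^*$ is distinct from both $\mu$ and $\nu$ because $I(\mu^*)=c>c_1=\max\set{I(\mu),I(\nu)}$. The main obstacle I anticipate is the clean verification of $\mathrm{(MPS)}$ — controlling the entropy along energy-bounded, asymptotically critical sequences and invoking lower semicontinuity of the metric slope for $\lambda$-convex functionals — together with the (standard but, on the non-simply-connected torus, slightly delicate) identification of the geodesic-convexity constant of the interaction energy.
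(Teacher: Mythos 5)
Your proposal is correct and follows essentially the same route as the paper: verify that $I$ is proper, l.s.c.\ and $\lambda$-geodesically convex, note that compactness of $(\cP(\T),W_2)$ handles $\mathrm{(\hyperlink{MPS}{MPS})}$, establish $c>c_1$ by minimising $I$ over the compact sphere $\partial B_r^{W_2}(\mu)$ around the strict local minimum and using that every admissible curve must cross it, and then invoke \thref{thm:mpwi}. The only difference is cosmetic: you treat the $\mathrm{(\hyperlink{MPS}{MPS})}$ verification as the main obstacle, whereas compactness of $\cP(\T)$ makes it immediate (any sequence whatsoever has a $W_2$-convergent subsequence), so the extra discussion of entropy bounds and lower semicontinuity of the slope is not needed.
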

The proof of the above result can be found in Section~\ref{s:mva}. Furthermore, in Section~\ref{s:mva}, by relying on results from~\cite{CGPS18}, we will show that we can establish explicit conditions on the interaction potential~$W$  such that two distinct global minimisers $\set[\big]{\mu^L, \bar\mu}$ of the free energy~$I$~\eqref{eq:I} exist. This happens at a so-called \emph{discontinuous transition point} $\beta_c>0$. This provides us with a scenario in which we can apply~\thref{intro:mpf}.  Hereby, $\mu^L:=L^{-d}\dx{x}$ is the uniform state and $\bar \mu$ is a clustered state.  The existence of the associated saddle point can be found in~\thref{cor:mpf}.
\begin{rem}\thlabel{rem:rev2}
The abstract mountain pass theorem in~\thref{thm:mpwi} holds whenever one can find two measures, not necessarily critical points, in the domain of some $I:\cP_2(M) \to \R \cup \set{+\infty}$, such that the barrier value $c$ exceeds the maximum of their energies. We have chosen to apply the result in~\thref{intro:mpf} at a strict local minimum of the McKean--Vlasov free energy, $I$, because in this setting it is clear that the barrier value exceeds the value at the local minimum.
\end{rem} 
The fact, from~\thref{cor:mpf}, that the free energy functional $I$ has an energy barrier at $\beta=\beta_c$ allows us to study escape probabilities for the underlying particle system using results which were first proved by Dawson and G\"artner~\cite{DG1987}. We refer the reader to~\cite{ADPZ11,Reygner18,GPY13} for further discussions of the connections between large deviations theory and theory of gradient flows. 
\begin{thm}\thlabel{thm:ldi}
Assume $W$ and $\beta_c$ are such that there exist at least two distinct minimisers $\set{\mu^L, \bar \mu}$ of $I$. It follows then that the underlying empirical process $\mu^{(N)} \in \cC_T$ 
with initial i.i.d uniformly distributed particles satisfies
\begin{align}
\mathbb{P}(\mu^{N}(T)\in \overline{B}_\eps^{W_2}\bra{\bar{\mu}}, \mu^{(N)}(0)=\mu^{(N)}_0) \leq \exp\bra*{-N (\Delta- O(\eps^2))- o_T(1)}
\end{align}
for $N$ sufficiently large, where $\overline{B}_\eps^{W_2}\bra{\bar{\mu}}$ is the closed ball of size $\eps>0$ around $\bar{\mu}$ in $W_2$, $\Delta:= I(\mu^*)-I(\mu^L)$ and $\mu^*$ is the critical point defined in~\thref{cor:mpf}.
\end{thm}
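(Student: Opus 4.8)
The plan is to transpose, almost verbatim, the three-step argument of the introduction (Freidlin--Wentzell lower bound, then completing the square, then the mountain pass lemma) to the Wasserstein setting, with the Dawson--Gärtner theorem~\cite{DG1987} playing the role of the Freidlin--Wentzell theorem and \thref{cor:mpf} that of the classical mountain pass lemma. First I would invoke the Dawson--Gärtner large deviations principle: the empirical process $\mu^{(N)}$, viewed in $\cC_T$ (the space of continuous paths $[0,T]\to\cP(\T)$) and started from i.i.d.\ uniformly distributed particles, satisfies an LDP at speed $N$ with a good rate function $\cS_{[0,T]}$ which, on $W_2$-absolutely continuous curves with $\mu_0=\mu^L$ (the almost sure weak limit of the initial empirical measure), equals $\cS_{[0,T]}(\mu)=\tfrac14\int_0^T\norm*{v_t+\nabla\tfrac{\delta I}{\delta\mu}(\mu_t)}_{L^2(\mu_t)}^2\dx{t}$, where $v_t$ is the minimal (gradient) velocity field determined by $\partial_t\mu_t+\nabla\cdot(\mu_t v_t)=0$, and $+\infty$ otherwise. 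Since $\mu\mapsto\mu_T$ is continuous on $\cC_T$ and $\overline{B}_\eps^{W_2}(\bar\mu)$ is closed, the upper bound gives $\limsup_N N^{-1}\log\bP(\mu^{(N)}(T)\in\overline{B}_\eps^{W_2}(\bar\mu),\,\mu^{(N)}(0)=\mu^{(N)}_0)\le-\inf\set{\cS_{[0,T]}(\mu):\mu_T\in\overline{B}_\eps^{W_2}(\bar\mu)}$, so the whole problem reduces to bounding this infimum below by $\Delta-O(\eps^2)$.

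Next I would carry out the reverse chain rule. For any admissible curve $\mu$ with $\cS_{[0,T]}(\mu)<\infty$, completing the square via $\abs{a+b}^2=\abs{a-b}^2+4\,a\cdot b$ and using the chain-rule identity $\int_0^t\int v_s\cdot\nabla\tfrac{\delta I}{\delta\mu}(\mu_s)\dx{\mu_s}\dx{s}=I(\mu_t)-I(\mu_0)$, valid for $W_2$-absolutely continuous curves of finite action along the $\lambda$-geodesically convex functional $I$ (cf.~\cite{ambrosio2008gradient,DS10}), and then discarding the two nonnegative leftover integrals, one obtains $\cS_{[0,T]}(\mu)\ge I(\mu_t)-I(\mu_0)$ for every $t\in[0,T]$; choosing $T^*\in\arg\max_{t\in[0,T]}I(\mu_t)$ this yields $\cS_{[0,T]}(\mu)\ge I(\mu_{T^*})-I(\mu^L)$.

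Then comes the mountain pass step, which converts the pointwise maximum of $I$ along the curve into the barrier value. Let $\sigma\colon[0,1]\to\cP_2(\T)$ be a constant-speed $W_2$-geodesic from $\mu_T$ to $\bar\mu$, of length $W_2(\mu_T,\bar\mu)\le\eps$. Concatenating a reparametrisation of $\mu$ on $[0,\tfrac12]$ with $\sigma$ on $[\tfrac12,1]$ produces a path $\gamma\in\Gamma=\set{\gamma\in C([0,1];\cP(\T)):\gamma(0)=\mu^L,\,\gamma(1)=\bar\mu}$. On the geodesic piece, $\lambda$-geodesic convexity gives $I(\gamma(s))\le\max\set{I(\mu_T),I(\bar\mu)}+\tfrac{\abs{\lambda}}{8}\eps^2$, and since $\bar\mu$ minimises $I$ and $I(\mu_T)\le\sup_{t\in[0,T]}I(\mu_t)=I(\mu_{T^*})$, one has $\sup_{s\in[0,1]}I(\gamma(s))\le I(\mu_{T^*})+\tfrac{\abs{\lambda}}{8}\eps^2$. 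By the definition of the barrier value $c=I(\mu^*)$ from \thref{cor:mpf} this forces $I(\mu_{T^*})\ge c-\tfrac{\abs{\lambda}}{8}\eps^2$. Combining with the previous step and with $\Delta=c-I(\mu^L)$ gives $\inf\set{\cS_{[0,T]}(\mu):\mu_T\in\overline{B}_\eps^{W_2}(\bar\mu)}\ge\Delta-\tfrac{\abs{\lambda}}{8}\eps^2$, and inserting this into the Dawson--Gärtner upper bound yields the claim for $N$ large; the $o_T(1)$ error absorbs the relative-entropy cost of the i.i.d.\ initial configuration (which is not exactly $\mu^L$ at finite $N$) together with the $t\to0$ boundary layer in the rate function.

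The main obstacle I anticipate is the rigorous justification of the chain-rule identity in the second step: $\tfrac{\delta I}{\delta\mu}$ need not exist pointwise, so one must work within the metric theory of curves of maximal slope, exploiting that $I$ is proper, l.s.c.\ and $\lambda$-geodesically convex (the same structural facts already used for \thref{intro:mpf}) and that solutions of~\eqref{mvintro} are curves of maximal slope of $I$~\cite{DS10}; a subordinate but nontrivial point is the careful treatment of the i.i.d.\ uniform initial condition in the Dawson--Gärtner LDP, which is precisely what produces the $o_T(1)$ correction, together with the verification that the concatenated path $\gamma$ is genuinely admissible (continuity at the junction and at the endpoints, which follows from $W_2$-continuity of $\mu$ and of geodesics).
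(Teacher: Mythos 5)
Your proposal is correct and follows the paper's overall strategy — Dawson--Gärtner upper bound on the closed set of paths ending in $\overline{B}_\eps^{W_2}(\bar\mu)$, then completing the square together with the chain rule along $AC^2$ curves to get $S(\mu)\geq \max_{t}I(\mu(t))-I(\mu^L)$, then a mountain-pass step converting that maximum into the barrier value up to $O(\eps^2)$ — but your execution of the third step is genuinely different from, and simpler than, the paper's. The paper handles the fact that paths in $F$ end anywhere in the $\eps$-ball (not at $\bar\mu$ itself) by passing to the epigraph: it introduces the class $F_{\epi}$ of lifted curves, extracts a near-minimiser $\tilde\mu$ of the lifted barrier problem, and runs a concatenation-plus-contradiction argument with $\cG_I$ to show the deficit $\delta$ satisfies $\delta\leq\frac{|\lambda|}{2}\eps^2$. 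You instead argue curve by curve: for each finite-action $\mu\in F$ you append a constant-speed $W_2$-geodesic from $\mu_T$ to $\bar\mu$, observe that $\lambda$-geodesic convexity caps the energy gain on that segment by $\frac{|\lambda|}{8}\eps^2$ (using $I(\bar\mu)=I(\mu^L)\leq I(\mu_{T^*})$ and $I(\mu_T)\leq I(\mu_{T^*})$), and then invoke the bare definition of $c=\inf_{\gamma\in\Gamma}\sup_s I(\gamma(s))$ to force $I(\mu_{T^*})\geq c-\frac{|\lambda|}{8}\eps^2$. This avoids the epigraph machinery and the near-minimiser entirely, and even yields a marginally better constant ($\frac{|\lambda|}{8}$ versus the paper's $|\lambda|$); the only care needed, which you correctly flag, is that $t\mapsto I(\mu_t)$ is continuous along finite-action curves so that $T^*$ exists, and that $\mu_T,\bar\mu\in D(I)$ so the convexity inequality applies. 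Two minor cosmetic discrepancies with the paper: the $o_T(1)$ there arises purely from converting the $\limsup$ bound into a finite-$N$ bound (the i.i.d.\ initial data is handled by the hypothesis $W_2(\mu_0^{(N)},\mu^L)\to 0$ in \thref{thm:dg}, not as an entropy cost), and you should state that conversion explicitly; neither affects the validity of the argument.
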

The above result says that the probability of the empirical process reaching the clustered state, $\bar{\mu}$, from the uniform state, $\mu^L$, in time $T>0$ becomes exponentially small as the number of particles increases, as long as the system is at a discontinuous transition point.
In light of the recent results in~\cite{FehrmanGess2019}, we expect the above bound to hold for a stochastic version of the McKean-Vlasov PDE in the regime of vanishing noise, i.e.  Freidlin--Wentzell-type large deviations.

\begin{rem}\thlabel{rem:whyTd}
 We note that we have considered the McKean--Vlasov system on the torus in this paper which on the one hand has the advantage that the space $\cP(\T)$ equipped with the $2$-Wasserstein distance is compact, while on the other hand we can build on the characterisation of critical points and phase transitions from the work in~\cite{CGPS18}. Thus, we can extract a lot of information about the structure of stationary states in this setting. On the torus, the normalised Lebesgue measure $\mu^L=L^{-d} \dx{x}$ is always an invariant measure for the McKean--Vlasov dynamics~\eqref{mvintro} and it is the unique minimiser of the free energy before the critical temperature. Thus, linearisation arguments provide us with a lot of useful information that may not be readily available for the diffusions on $\R^d$. An extension to $\R^d$, with some suitable confinement potential, seems to be possible. The critical points and phase transitions are studied for specific choices of $W$, for example, in~\cite{tamura1984asymptotic} and~\cite{tugaut2013phase}. For the case of diffusions on $\R^d$ with a bi-stable confinement and Curie--Weiss-type quadratic interaction, large deviations, escape probabilities, and tunnelling results can be found in~\cite{DG1986,DG1987,DG1989} while a study of the basins of attraction of the different stationary states is the content of~\cite{Bashiri2020}.
\end{rem}

\begin{rem}\thlabel{rem:hclinic}
Even though the Dawson--G\"artner large deviations principle provides an exponential lower bound on the above probability, it is not clear how this can be compared to the energy barrier $\exp\bra[\big]{-N (\Delta- O(\eps^2))}$ for a general model. 
However, such a lower bound could be obtained, for example, in the following setting: for all $\eps>0$, there exist two points in $\mu^*_0, \bar\mu^*$ in a neighborhood of $\mu^*$ such that $\mu^*_0$ is connected to $\mu^L$ and $\bar\mu^*$ is connected to $\bar{\mu}$ through a heteroclinic orbit under the flow of the McKean--Vlasov PDE. However, it is unlikely that such heteroclinic connections exist at this level of generality in the choice of $W$. A first step in this direction would be the characterisation of $\mu^*$ for specific choices of $W$. In analogy to the situation in finite-dimensional Hamiltonian dynamical systems, it may be possible to use a version of weak KAM theory in the Wasserstein space of probability measures to construct such heteroclinic orbits. We refer the reader to~\cite{FengKatsoulakis2009,FengNguyen2012,AmbrosioFeng2014} for the construction of solutions to Hamiltonian--Jacobi PDEs in the space of probability measures. These ideas may help in obtaining a generalisation of weak KAM theory to dissipative evolution equations in the space of probability measures.
\end{rem}
\subsection*{Outline}
The paper is organised as follows: In Section \ref{s:cp}, we introduce the notion of the weak metric slope and metric critical points that we will use throughout the paper and a version of the mountain pass theorem due to Katriel that holds for continuous functions on metric spaces. In Section~\ref{s:ot}, we briefly recall some results due to McCann on optimal transport on Riemannian manifolds. In Section~\ref{s:mpw}, we compare the notion of the weak metric slope with the notion of (strong) metric slope used in the gradient flows community and show that, under the assumption of $\lambda$-convexity of $I$, the two are equivalent. We conclude the section by proving~\thref{thm:mpwi}. In the final Section~\ref{s:mva}, we discuss as a specific application of the result: the McKean--Vlasov model. We state and extend some results from~\cite{CGPS18} on the structure of the set of minimisers of $I$ and their phase transitions. We proceed by showing the existence of mountain pass at the point of discontinuous phase transition, thus proving~\thref{intro:mpf}. Finally, we introduce the precise form of the large deviations principle due to Dawson and Gärtner and complete the proof of~\thref{thm:ldi}. In particular, these results imply a kind of noise-induced metastability for the underlying particle system.

\section{Critical points in metric spaces}\label{s:cp}
We will assume throughout this section that $(\cX,d)$ is a complete metric space. We start with the definition of the weak metric slope for some real-valued continuous function defined on  $\cX$. The notion goes back to Ioffe and Schwartzman~\cite{IS96} who provided the definition in the Banach space setting.
\begin{defn}[$\delta$-regular points, weak metric slope and critical points~\cite{katriel94}] \thlabel{def:rp}
Let $x \in \cX$, and $I:\cX \to \R$ be a continuous function defined in a neighbourhood of $x$. Given $\delta>0$, $x$ is said to be a \emph{$\delta$-regular point} of $I$ if there is a neighbourhood $U$ of $x$, a constant $\alpha>0$, and a continuous mapping $\psi:  U \times [0,\alpha] \to \cX$ such that for all $(u,t) \in U \times [0,\alpha]$, it holds:
\begin{enumerate}
\item $d\bra*{\psi(u,t),u}\leq t$.
\item $I(u)-I(\psi(u,t))\geq \delta t$.
\end{enumerate}
If this is the case, $\psi$ is called a \emph{$\delta$-regularity mapping} for $I$ at $x$ and $x$ is called a \emph{regular point} of $I$.

The \emph{weak metric slope} of $I$ at $x$ is given by the extended real number
\[
    \abs{dI}(x)= \sup \set{\delta \in (0,\infty): I\textrm{ is } \delta\textrm{-regular at } x} \,.
\]
If $x$ is not $\delta$-regular for any $\delta>0$, then $x$ is called a \emph{critical point} of $I$ with $\abs*{dI}(x)=0$.
\end{defn}
\begin{assumption}[Weak metric Palais--Smale condition]\label{ass:MPS}
 A function $I:\cX \to \R$ is said to satisfy the \emph{weak metric Palais--Smale condition} (\hypertarget{MPS}{MPS}) if any \emph{Palais sequence}, that is $\{u_n\}_{n \in \N} \in \cX$ with $I(u_n )\to c \in\R$ and $|dI|(u_n) \to 0$, possesses a convergent subsequence. 
\end{assumption}
Given this notion, we have the following generalisation of the Ambrosetti--Rabinowitz mountain pass theorem due to Katriel~\cite{katriel94}.
\begin{thm}~\thlabel{thm:mpc}
Let $\cX$ be a path-connected metric space and $I: \cX \to \R$ be continuous. For $u_0,u_1 \in X$ let $\Gamma$ be the set of all continuous curves $\gamma:[0,1] \to \cX$ with $\gamma(0)=u_0$ and $\gamma(1)=u_1$, and the function $\Upsilon: \Gamma
\to \R$ is given by
\[
\Upsilon(\gamma)=\sup_{t \in[0,1]} I(\gamma(t)) \,.
\]
Let $c=\inf_{\gamma \in \Gamma} \Upsilon(\gamma)$ and $c_1=\max\set{I(u_0),I(u_1)}$. If $c>c_1$ and $I$ satisfies $\mathrm{(\hyperlink{MPS}{MPS})}$, then $c$ is a critical value of $I$.
\end{thm}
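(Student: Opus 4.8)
The plan is to run the classical mountain pass argument by contradiction, with the pseudo-gradient flow replaced by the $\delta$-regularity mappings of \thref{def:rp}. Assume, then, that $c$ is \emph{not} a critical value, so that every $x \in \cX$ with $I(x) = c$ is a regular point, i.e.\ $\abs{dI}(x) > 0$.

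First I would record the ``spreading'' property of \thref{def:rp}: if $\psi\colon U \times [0,\alpha] \to \cX$ is a $\delta$-regularity mapping for $I$ at $x$, then its restriction to $U' \times [0,\alpha]$ is a $\delta$-regularity mapping at every $y$ in a neighbourhood $U' \subseteq U$ of $y$, so that $\abs{dI} \ge \delta$ throughout $U$; in particular $\abs{dI}$ is lower semicontinuous. Using this together with $\mathrm{(MPS)}$, I would then show there are $\eps_0 \in (0, c - c_1)$ and $\delta_0 > 0$ with $\abs{dI}(x) \ge \delta_0$ for every $x$ in the strip $\set{x : \abs{I(x) - c} \le \eps_0}$: otherwise one produces $x_n$ with $I(x_n) \to c$ and $\abs{dI}(x_n) \to 0$, a Palais sequence, whose $\mathrm{(MPS)}$-limit $x_*$ satisfies $I(x_*) = c$ (continuity of $I$) and $\abs{dI}(x_*) = 0$ (lower semicontinuity of $\abs{dI}$) — a critical point at level $c$, contradicting the assumption.

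The technical core is then a deformation lemma: for $\eps > 0$ small (say $3\eps < \eps_0$) there is a continuous map $\eta\colon\cX\to\cX$ which is the identity on $\set{I \le c - \eps_0}$ and sends $\set{I \le c + \eps}$ into $\set{I \le c - \eps}$. The idea is to ``flow downhill'' through the strip: a point starting at level $\le c + \eps$ stays inside $\set{\abs{I - c} \le \eps_0}$ — where the local $\delta_0$-regularity mappings are available, each of which decreases $I$ at rate $\ge \delta_0$ while moving points a controlled amount — until it has been pushed below level $c - \eps$, which costs a total amount of ``time'' at most $3\eps/\delta_0$; one then concatenates finitely many such local pushes and glues them continuously (metric spaces are paracompact, so subordinate partitions of unity are available), with a cut-off in the $I$-value turning the construction off outside $\set{c - \eps_0 < I < c + \eps_0}$. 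This is essentially Katriel's construction in \cite{katriel94}, and carrying it out carefully is the main obstacle, since in a general metric space there is no linear structure and the regularity mappings are only locally defined both in space and in ``time''.

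Granting $\eta$, the conclusion is the standard one. Since $c = \inf_{\gamma\in\Gamma}\Upsilon(\gamma)$, pick $\gamma\in\Gamma$ with $\Upsilon(\gamma) < c + \eps$, i.e.\ $I(\gamma(t)) \le c + \eps$ for all $t$. Then $I(\eta(\gamma(t))) \le c - \eps$ for all $t$, while $\eta$ fixes the endpoints $\gamma(0) = u_0$ and $\gamma(1) = u_1$ because $I(u_0), I(u_1) \le c_1 < c - \eps_0$; hence $\eta\circ\gamma \in \Gamma$ and $\Upsilon(\eta\circ\gamma) \le c - \eps < c$, contradicting $c = \inf_\Gamma \Upsilon$. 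Therefore $c$ is a critical value. As an alternative that avoids the full deformation lemma, one can apply Ekeland's variational principle to the continuous functional $\Upsilon$ on the complete metric space of curves $(\Gamma, d_\Gamma)$ with $d_\Gamma(\gamma,\widetilde\gamma) = \sup_t d(\gamma(t), \widetilde\gamma(t))$, extract from an $\eps$-minimiser a point $\gamma_\eps(t_*)$ at a maximum of $I\circ\gamma_\eps$ with $I(\gamma_\eps(t_*))$ near $c$ and $\abs{dI}(\gamma_\eps(t_*))$ of order $\eps$ (else a small downhill perturbation near the compact maximiser set in $[0,1]$ violates Ekeland's inequality), and then let $\eps \to 0$ and invoke $\mathrm{(MPS)}$ and lower semicontinuity of $\abs{dI}$; this still requires gluing regularity mappings, but only over a compact subset of $[0,1]$.
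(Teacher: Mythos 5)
The paper does not prove this statement at all: it is quoted verbatim from Katriel~\cite{katriel94}, and your plan is essentially a reconstruction of Katriel's argument — lower semicontinuity of $\abs{dI}$ via the spreading of regularity mappings, a uniform slope bound $\abs{dI}\ge\delta_0$ on a strip $\set{\abs{I-c}\le\eps_0}$ obtained from $\mathrm{(MPS)}$, a quantitative deformation pushing $\set{I\le c+\eps}$ into $\set{I\le c-\eps}$ while fixing $\set{I\le c-\eps_0}$, and the standard minimax contradiction. The only place where real work remains — continuously gluing the locally defined, locally-in-time regularity mappings in a space with no linear structure to produce that deformation — is correctly identified as the main obstacle but left unexecuted; that construction is precisely the content of~\cite{katriel94}, so as a plan this is sound and coincides with the proof the paper cites.
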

For the application of the mountain pass theorem to the energies encountered in gradient flows, we need a working definition of the weak slope if $I$ is only lower semicontinuous. We also need to deal with the fact that~\thref{thm:mpc} holds only for continuous functions. 
Consider the example $I : \R \to \R$ with $I(x)=x+1$ for $x <0$ and $I(x)=x$ for $x \geq 0$. Then $I$ is l.s.c, and it is easy to verify that $I$ has a critical point at $x=0$ in the sense of \thref{def:rp}. However, this seems to be in some sense pathological for identifying mountain pass points as $I$ does not attain the value of the barrier at $x=0$, i.e. $I(0)=0$. We like to use a theory of critical points for l.s.c. functionals which handles such critical points in our generalisation of the mountain pass theorem. One possible resolution for this example would be to view the function $I$ as a multivalued map at $x=0$ with values $[0,1]$ from which the energy barrier ist identified as the maximum of those values. Another resolution for this issue was suggested by Degiovanni and Marzocchi~\cite{degiovanni94}, who, using notions developed in~\cite{DGMT80}, proposed a generalisation based on the so-called \emph{epigraph extension}, $\cG_I$, a continuous function associated to $I$ and defined on its epigraph (cf.~\thref{def:epigraph}). This idea also helps us overcome the difficulty that~\thref{thm:mpc} holds only for continuous functions.
\begin{defn}[Extension to the epigraph]\thlabel{def:epigraph}
Let $I :\cX \to \R \cup \{+ \infty\}$ be a proper l.s.c functional and denote by $\epi(I)= \set{(u,\xi) \in \cX \times \R: I(u) \leq \xi }$ its epigraph, which is equipped with the \emph{graph metric} $d_{\epi} \bra[\big]{(u,\xi),(v,\zeta)}= \sqrt{d(u,v)^2 + |\xi -\zeta|^2}$. The \emph{epigraph extension} of $I$ is the functional $\cG_I: \epi(I) \to \R \cup \{+\infty\}$ defined by 
\[
  \cG_I(u,\xi)=\xi, \qquad (u, \xi) \in \epi(I) \, .
\]
\end{defn}
It is now straightforward to check that $\cG_I$ is a continuous function with respect to $d_{\epi}$ and that $\abs*{d \cG_I}(u,\xi) \leq 1$ for all $(u,\xi) \in \epi(I)$.
Let us also point out that the epigraph of a lower semicontinuous function is closed~\cite[Proposition 2.5]{BarbuPrecupanu2012}.
It turns out, that the notion of the weak slope for l.s.c functions on $\cX$ based on the epigraph extension is suitable for applications to mountain pass theorems in metric spaces. 
\begin{defn}~\thlabel{def:wms}
Let $I : \cX \to \R \cup \{+\infty\}$ be a proper l.s.c function. Define its domain as
\[
D(I) := \set*{x \in \cX : I(x) < + \infty} \,.
\]
Then the \emph{weak metric slope} at $x\in D(I)$ is defined by
\begin{align}
\abs*{d I(x)}= 
\begin{cases}
\frac{\abs*{d \cG_I(x,I(x))}}{\sqrt{1- \abs*{d \cG_I(x,I(x))}^2}} &\text{ if } \abs{d \cG_I(x,I(x))} <1 \\
+ \infty  &\text{ if } \abs{d \cG_I(x,I(x))} =1 \, . 
\end{cases}
\end{align}
Again, $x \in D(I)$ is called \emph{critical point} of $I$ if $(x,I(x)) \in \epi(I)$ is a critical point of $\abs*{d \cG_I}\bra[\big]{u,I(u)}$.
\end{defn} 
In the case when $I$ is continuous the above definition is equivalent to~\thref{def:rp}. Indeed, it holds by \cite[Proposition 2.3]{degiovanni94}, that in this case 
\[
  \abs*{d\cG_I(x,I(x))} = \begin{cases}
  \frac{|dI(x)|}{\sqrt{1+\abs*{d I(x)}^2}} &\text{ if } \abs*{dI(x)} < \infty \\
  1 &\text{ if } \abs{dI(x)} = \infty
                         \end{cases} \quad\text{and}\quad \abs{d\cG_I(x,\xi)}=1 \text{ if } I(x) < \xi \, . 
\]
Hence, the~\thref{def:wms} is a generalisation of the weak metric slope from~\thref{def:rp} to lower semicontinuous functionals. However, this definition is, in general, hard to verify. For this reason, we state without a proof a result from~\cite{degiovanni94} that provides a lower bound on $\abs{dI}$.
\begin{prop}[{\cite[Proposition 2.5]{degiovanni94}}]~\thlabel{prop:prop2.5}
Let $I : \cX \to \R \cup \set{+\infty}$ be a proper, l.s.c functional and for $b\in \R$ let $D(I)_b = \set{ x\in D(I) : I\leq b}$. If for some $x \in D(I)$ there exist constants $\delta>0, b> I(x), \alpha >0$, a neighbourhood $U$ of $x$, and a mapping $\Psi: (U \cap D(I)_b )\times [0, \alpha]\to \cX$ such that for all $(u,t) \in U \cap D(I)_b \times [0, \alpha] $ it holds that
\[
d(\Psi(u,t),u) \leq t \quad\text{and}\quad I(u)-I(\Psi(u,t)) \geq \delta \, t \, .
\]
Then, $\abs{dI}(x) \geq \delta$.
\end{prop}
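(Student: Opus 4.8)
The plan is to reduce the estimate, through the defining formula of \thref{def:wms}, to the construction of a single explicit regularity mapping (in the sense of \thref{def:rp}) for the continuous function $\cG_I$ (the epigraph extension, cf.~\thref{def:epigraph}) on $(\epi(I),d_\epi)$ at the point $y:=(x,I(x))\in\epi(I)$. The map $s\mapsto s/\sqrt{1-s^2}$ is a strictly increasing bijection of $[0,1)$ onto $[0,\infty)$ with inverse $\delta\mapsto\delta/\sqrt{1+\delta^2}$; hence, by \thref{def:wms}, the conclusion $\abs{dI}(x)\geq\delta$ will follow once we establish that $\abs{d\cG_I(y)}\geq\delta':=\delta/\sqrt{1+\delta^2}$ (so $0<\delta'<1$). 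Indeed, if $\abs{d\cG_I(y)}<1$ then
\[
  \abs{dI}(x)=\frac{\abs{d\cG_I(y)}}{\sqrt{1-\abs{d\cG_I(y)}^2}}\;\geq\;\frac{\delta'}{\sqrt{1-(\delta')^2}}\;=\;\delta
\]
by monotonicity, while if $\abs{d\cG_I(y)}=1$ then $\abs{dI}(x)=+\infty\geq\delta$ outright. Since $\abs{d\cG_I(y)}=\sup\set{\delta''\in(0,\infty):\cG_I\text{ is }\delta''\text{-regular at }y}$, it suffices to exhibit a $\delta'$-regularity mapping for $\cG_I$ at $y$.

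To do so I would introduce the two ``speeds'' $a:=1/\sqrt{1+\delta^2}$ and $h:=\delta\,a=\delta'$, chosen so that $a^2+h^2=1$ and $h=\delta\,a$. Put $\cU:=\set{(u,\xi)\in\epi(I):u\in U,\ \xi<b}$, which is a neighbourhood of $y$ in $\epi(I)$ since $U$ is a neighbourhood of $x$ and $I(x)<b$, and set $\alpha':=\alpha/a$. For $(u,\xi)\in\cU$ and $t\in[0,\alpha']$ one has $I(u)\leq\xi<b$, hence $u\in U\cap D(I)_b$, and $at\in[0,\alpha]$, so $\Psi(u,at)$ is defined; I then set
\[
  \widetilde\Psi\bigl((u,\xi),t\bigr):=\bigl(\Psi(u,at),\,\xi-h\,t\bigr)\,.
\]
Taking $\Psi$ continuous (as is implicit in the hypothesis and the case in our applications), $\widetilde\Psi$ is clearly continuous; it remains only to check that it actually takes values in $\epi(I)$ and satisfies the two conditions of \thref{def:rp} with constant $\delta'$.

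For membership in $\epi(I)$: the hypothesis on $\Psi$ gives $I(u)-I(\Psi(u,at))\geq\delta\,at$, so, using also $I(u)\leq\xi$,
\[
  I\bigl(\Psi(u,at)\bigr)\;\leq\;I(u)-\delta\,at\;\leq\;\xi-h\,t\,,
\]
which shows at once that $\widetilde\Psi((u,\xi),t)\in\epi(I)$ and that condition~(2) holds with equality, $\cG_I(u,\xi)-\cG_I(\widetilde\Psi((u,\xi),t))=\xi-(\xi-h\,t)=h\,t=\delta'\,t$. For condition~(1), using $d(\Psi(u,at),u)\leq at$,
\[
  d_\epi\bigl(\widetilde\Psi((u,\xi),t),(u,\xi)\bigr)=\sqrt{d(\Psi(u,at),u)^2+(h\,t)^2}\;\leq\;\sqrt{(at)^2+(h\,t)^2}=t\sqrt{a^2+h^2}=t\,.
\]
Thus $\cG_I$ is $\delta'$-regular at $y$, giving $\abs{d\cG_I(y)}\geq\delta'$, and the reduction above finishes the proof.

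I do not expect a genuine analytic obstacle: the crux is simply the calibration of the horizontal speed $a$ — along which $\Psi$ decreases $I$ at rate $\delta\,a$ — against the vertical drop $h$ of the epigraph coordinate, under the two competing requirements $a^2+h^2=1$ (so the graph-metric displacement stays $\leq t$) and $h\leq\delta\,a$ (so that $\Psi$'s decrease of $I$ keeps us inside $\epi(I)$); the extremal choice $h=\delta\,a$ is exactly what turns slope $\delta$ for $I$ into slope $\delta/\sqrt{1+\delta^2}$ for $\cG_I$. The only other point requiring (minor) care is choosing $\cU$ small enough that every $(u,\xi)\in\cU$ satisfies both $u\in U$ and $I(u)\leq\xi<b$, which is what licenses invoking the hypothesis on $\Psi$; this is possible precisely because $b>I(x)$ and the coordinate projection $\epi(I)\to\cX$ is $1$-Lipschitz.
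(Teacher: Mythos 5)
The paper itself states this result without proof, simply citing Degiovanni--Marzocchi, so there is no in-paper argument to compare against; your proof is correct and is essentially the standard one from that reference. The calibration $a=1/\sqrt{1+\delta^2}$, $h=\delta a$ with $a^2+h^2=1$ correctly turns the hypothesis on $\Psi$ into a $\delta/\sqrt{1+\delta^2}$-regularity mapping for $\cG_I$ at $(x,I(x))$ (the check $I(\Psi(u,at))\leq \xi-ht$ guaranteeing values in $\epi(I)$ is exactly the point), and the monotone reparametrisation in \thref{def:wms} then yields $\abs{dI}(x)\geq\delta$; your reading that $\Psi$ is implicitly continuous matches the cited source.
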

We return to the previous example $I: \R \to \R$ with $I(x)=x+1$ for $x<0$ and $I(x)=x$ for $x\geq 0$. 
In regard of~\thref{def:rp}, 
we pick $U$ to be the ball of size $\delta$ around $(0,0)$ in $\epi(f)$.
Choosing the map $\Phi\bra[\big]{(x,\xi),t}=\bra[\big]{x+t/\sqrt{2},\xi+t/\sqrt{2}}$, then we have $\cG_f((x,\xi))=\xi$

we have that $\abs*{d \cG_f}((0,0))=1$
and thus $\abs*{d f}(0)= +\infty$. Thus, the new definition captures the fact that $f$ has a jump at $x=0$ and correctly does not classify it as a critical point. 

Although we can apply the mountain pass~\thref{thm:mpc} to the function $\cG_I$, we do not know if the critical point we obtain is of the form $\bra[\big]{x,I(x)}$, i.e. we have no information about how $\abs{d\cG_I}$ behaves away for $(x, \xi) \in \epi(I)$ such that $\xi>I(x)$.  Degiovanni and Marzocchi~\cite{degiovanni94} provide some intuition in the case in which $I$ is a functional defined on a Banach space and consists of convex l.s.c part plus a $C^1$ perturbation. The critical point, in this case, is defined relative to the metric generated by the norm. Another more abstract approach could be to establish a mountain pass theorem for multivalued maps, as indicated in the discussion surrounding~\thref{def:epigraph}.

In the study of gradient flows, this problem can be treated differently. In Section~\ref{s:mpw}, we show how the notion of $\lambda$-convexity ensures that critical points in the sense of \thref{def:wms} are actually of the form $\bra[\big]{x,I(x)}$ (cf.~\thref{lem:slopeaway}). Furthermore, the notion of weak metric slope for l.s.c functions introduced in~\thref{def:wms} ensures that the critical points obtained in~\thref{thm:mpwi} attain the barrier value. Before discussing this in further detail, we first cover some preliminaries on optimal transport on Riemannian manifolds.

\section{Optimal transport on manifolds} \label{s:ot}
Let $M$ be a complete, connected, and smooth Riemannian manifold equipped with a metric given in local coordinates by $g_{ij}$. We denote the geodesic distance between $x,y \in M$ by $d_M(x,y)$ and the Riemannian volume element by $\dx{\vol}(x)=\sqrt{\det g
_{ij}(x)}\dx{x}$ in local coordinates. For $x \in M$, we denote the inner product on the tangent space $T_x M$ by $\skp{\cdot,\cdot}$.
Let $c(x,y):=d_M(x,y)^2/2$ denote the cost function. We denote by $\cP_2(M)$ the space of Borel probability measures on $M$ with finite second moment. Given $\mu,\nu \in \cP_2(M)$, the $2$-Wasserstein distance, $W_2(\cdot,\cdot)$, between them is defined as
\begin{align}
W_2(\mu,\nu):= \inf_{\pi \in \Pi(\mu,\nu)} \int_{M \times M}d_M^2(x,y) \dx{\pi}(x,y) \, ,
\end{align}
where $\Pi(\mu,\nu)$ is the set of all couplings between $\mu$ and $\nu$. We first discuss the existence of minimising geodesics in $\cP_2(M)$ in the absence of any regularity assumptions on the initial and final measures. Given a curve $\mu \in C([0,1];\cP_2(M)) $, we define its length to be
\begin{align}
L(\mu):= \sup_{N \in \N}\, \sup_{0= t_0< \dots < t_N=1} \sum_{i=0}^{N-1}W_2(\mu\bra{t_i},\mu\bra{t_{i+1}}) \, .
\end{align}
We now state the following result from~\cite[Corollary 7.2.2]{villani2008optimal}:
\begin{prop}\thlabel{prop:geodesic}
The space $\cP_2(M)$ is a geodesic metric space, i.e. for any two $\mu_0, \mu_1 \in \cP_2(M)$ there exists a minimising geodesic between them. That is to say, there exists a curve $\mu \in C([0,1]; \cP_2(M))$ between $\mu_0$ and $\mu_1$, such that
\begin{align}
W_2(\mu_0,\mu_1)= \min\set[\Big]{ L(\overline\mu): \overline\mu \in C([0,1];\cP_2(M)), \overline\mu(0)=\mu_0, \overline\mu(1)=\mu_1}= L(\mu) \, .
\end{align}
Furthermore, the curve $\mu$ can be reparametrised to have unit speed, that is
\begin{align}
W_2(\mu(t),\mu(s))= \abs{t-s}W_2(\mu_0,\mu_1) \, ,
\end{align}
for all $s,t \in [0,1]$. Such a reparametrised curve $\mu \in C([0,1];\cP_2(M))$ is called a \emph{unit speed minimising geodesic} between $\mu_0$ and $\mu_1$.
\end{prop}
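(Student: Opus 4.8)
The plan is to verify the three assertions in turn: existence of an optimal transport plan, construction of a geodesic by interpolating geodesics on $M$ fibrewise, and the identification of this curve as a length minimiser that already has constant speed. I will use only the completeness and connectedness of $M$ (via Hopf--Rinow) together with soft compactness arguments; no regularity of $\mu_0,\mu_1$ is needed.

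\textbf{Step 1 (optimal plans).} For $\mu_0,\mu_1 \in \cP_2(M)$ the coupling set $\Pi(\mu_0,\mu_1)$ is tight — its marginals are fixed and each has finite second moment — hence weakly sequentially compact by Prokhorov's theorem. Since $(x,y)\mapsto d_M^2(x,y)$ is nonnegative and continuous, the map $\pi\mapsto \int_{M\times M} d_M^2 \dx{\pi}$ is weakly lower semicontinuous on $\Pi(\mu_0,\mu_1)$, so it attains its infimum at some $\gamma\in\Pi(\mu_0,\mu_1)$ with $\int d_M^2 \dx{\gamma}=W_2^2(\mu_0,\mu_1)$. \textbf{Step 2 (geodesic interpolation).} By Hopf--Rinow any two points $x,y\in M$ are joined by a constant-speed minimising geodesic; choosing such a geodesic measurably in $(x,y)$ (see the obstacle below) yields a Borel map $(x,y)\mapsto G_{\cdot}(x,y)\in C([0,1];M)$ with $G_0(x,y)=x$, $G_1(x,y)=y$ and $d_M(G_s(x,y),G_t(x,y))=\abs{t-s}\,d_M(x,y)$. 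Set $\mu_s:=(G_s)_\#\gamma$. Then $\mu_0=\mu_0$, $\mu_1=\mu_1$, and $(G_s,G_t)_\#\gamma$ is a coupling of $\mu_s$ and $\mu_t$, giving
\[
W_2^2(\mu_s,\mu_t)\le \int_{M\times M} d_M^2\bigl(G_s(x,y),G_t(x,y)\bigr)\dx{\gamma}(x,y)=(t-s)^2\int d_M^2\dx{\gamma}=(t-s)^2\,W_2^2(\mu_0,\mu_1),
\]
so in particular $s\mapsto\mu_s$ is $W_2$-continuous.

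\textbf{Step 3 (length minimality and constant speed).} For \emph{any} curve $\overline\mu\in C([0,1];\cP_2(M))$ from $\mu_0$ to $\mu_1$ the triangle inequality gives $\sum_i W_2(\overline\mu(t_i),\overline\mu(t_{i+1}))\ge W_2(\mu_0,\mu_1)$ for every partition, hence $L(\overline\mu)\ge W_2(\mu_0,\mu_1)$; on the other hand the bound from Step 2 yields, for our curve $\mu$, $L(\mu)\le\sum\abs{t_{i+1}-t_i}\,W_2(\mu_0,\mu_1)=W_2(\mu_0,\mu_1)$. Therefore $L(\mu)=W_2(\mu_0,\mu_1)=\min\{L(\overline\mu)\}$, which is the first claim. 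Finally, combining $W_2(\mu_s,\mu_t)\le(t-s)W_2(\mu_0,\mu_1)$ with $W_2(\mu_0,\mu_1)\le W_2(\mu_0,\mu_s)+W_2(\mu_s,\mu_t)+W_2(\mu_t,\mu_1)\le\bigl(s+(t-s)+(1-t)\bigr)W_2(\mu_0,\mu_1)$ forces equality throughout, so $W_2(\mu_s,\mu_t)=\abs{t-s}\,W_2(\mu_0,\mu_1)$: the curve already has constant (unit) speed, and no reparametrisation is needed.

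\textbf{Main obstacle.} The only genuinely delicate point is the measurable selection in Step 2: in the presence of a cut locus minimising geodesics are non-unique, so one must choose $(x,y)\mapsto G_\cdot(x,y)$ in a Borel-measurable way. I would handle this via the Kuratowski--Ryll-Nardzewski selection theorem applied to the closed-valued multifunction $(x,y)\mapsto\{\text{unit-speed minimising geodesics from }x\text{ to }y\}\subset C([0,1];M)$, whose closedness and measurable dependence on $(x,y)$ follow from lower semicontinuity of the length and completeness of $M$. Equivalently, one can choose only a measurable \emph{midpoint} map $m\colon M\times M\to M$, set $\mu_{1/2}:=m_\#\gamma$, check it is a Wasserstein midpoint of $\mu_0,\mu_1$ using the argument of Steps 2--3, iterate on the dyadic rationals, and extend to all of $[0,1]$ by completeness of $(\cP_2(M),W_2)$. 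Everything else — tightness, lower semicontinuity of the cost, and the push-forward computations — is routine.
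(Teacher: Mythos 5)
Your proposal is correct. Note, however, that the paper does not prove this proposition at all: it is imported verbatim as \cite[Corollary 7.2.2]{villani2008optimal}, so there is no in-text argument to compare against. What you have written is essentially the standard proof underlying that citation: existence of an optimal plan by tightness of $\Pi(\mu_0,\mu_1)$ and lower semicontinuity of the quadratic cost, fibrewise geodesic interpolation $\mu_s=(G_s)_\#\gamma$, the two-sided length estimate, and the triangle-inequality rigidity argument forcing constant speed. You also correctly isolate the one genuinely non-trivial point, namely the Borel selection of a minimising geodesic for each pair $(x,y)$ in the presence of a cut locus; the Kuratowski--Ryll-Nardzewski route is the one used in the literature. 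Your alternative construction via a measurable midpoint map, iterated over dyadic times and completed by continuity, is worth keeping in mind: it is exactly the argument that extends the proposition from Riemannian manifolds to complete, separable, locally compact length spaces, which is the generalisation the paper invokes in \thref{rem:lspace1}. Two minor points you leave implicit and should record: the infimum defining $W_2$ is finite because the product coupling has finite cost (using $d_M^2(x,y)\leq 2d_M^2(x,o)+2d_M^2(o,y)$ and the second-moment assumption), and the interpolants $\mu_s$ themselves lie in $\cP_2(M)$ by the same elementary estimate applied to $d_M(G_s(x,y),o)$.
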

\begin{rem} \thlabel{rem:lspace1}
We will use the above proposition extensively to prove the mountain pass theorem in $\cP_2(M)$, i.e. ~\thref{thm:mpwi}. Note, however, that the result of~\thref{prop:geodesic} holds even if $M$ is replaced by some $\cM$, where $\cM$ is a complete, separable, and locally compact, length space (as is any complete, connected, Riemannian manifold, by the Hopf--Rinow theorem). However, we will refrain from working at this level of generality and will instead remind the reader when any stated results can be generalised to $\cM$. 
\end{rem}
We now introduce the following definition following~\cite{cordero01}.
\begin{defn}
  Let $A,B$ be compact subsets of $M$. The set $\cI^c(A,B)$ of \emph{$c$-concave functions} is the set of functions $\phi:A \to \R \cup \set{-\infty}$ not identically $-\infty$, for which there exists a function $\psi: B \to \R \cup\set{-\infty}$ such that
  \[
    \phi(x)= \inf_{y \in B}\bra[\big]{ c(x,y) -\psi(y)}, \quad \forall x \in A \,.
  \]
  The function $\phi$ is called the \emph{$c$-transform} of $\psi$ and abbreviate it as $\phi=\psi^c$.
\end{defn}
We have the following main result on the well-posedness of the Monge problem from~\cite{mccann01}.
\begin{thm}~\thlabel{thm:monge}
Let $M$ be a complete Riemannian manifold. Fix two Borel probability measures $\mu  \ll \vol$ and $\nu$ on $M$ and two compact
subsets $A,B \subset M$ containing the supports of $\mu$ and  $\nu$, respectively. Then there exists a $\phi \in \cI^c(A,B)$
such that the map
\[
  F(x) := \exp_x\bra*{-\nabla \phi (x)} \qquad\text{ is a pushforward of $\mu$ to $\nu$.}
\]
Furthermore, $F$ is the unique minimiser of the quadratic cost $\int_M c(x, G(x)) \dx{\mu}(x)$ among all Borel maps $G:M \to M$ pushing $\mu$ forward to $\nu$ apart from variations on sets of $\mu$-measure zero. It follows then that the $W_2$ transportation distance between $\mu$ and $\nu$ takes the following form
\[
W_2^2(\mu,\nu)= \int_M d_M(x, F(x))^2 \dx{\mu}(x) \,.
\]
\end{thm}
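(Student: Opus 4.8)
This is a theorem of McCann~\cite{mccann01}; the proof proceeds by relaxing the Monge problem to the Kantorovich problem, extracting an optimal Kantorovich potential, and then exploiting the Riemannian geometry of the squared distance to show that the optimal plan is supported on the graph of $F$.

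First I would set up the Kantorovich problem $\inf\set*{\int_{M\times M} c\,\dx\pi : \pi\in\Pi(\mu,\nu)}$. Since $\mu,\nu$ are supported on the compact sets $A,B$, the set $\Pi(\mu,\nu)$ is weakly compact and $\pi\mapsto\int c\,\dx\pi$ is continuous ($c=d_M^2/2$ being continuous and bounded on $A\times B$), so an optimal plan $\pi$ exists. Kantorovich duality (see e.g.~\cite{villani2008optimal,ambrosio2008gradient}) then produces a bounded function $\psi:B\to\R$ whose $c$-transform $\phi=\psi^c\in\cI^c(A,B)$ satisfies $\phi(x)+\psi(y)\le c(x,y)$ everywhere, with equality on $\supp\pi$. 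Being an infimum of the uniformly Lipschitz family $\set{c(\cdot,y)-\psi(y):y\in B}$ on the compact set $A$, $\phi$ is Lipschitz, hence differentiable $\vol$-a.e.\ by Rademacher's theorem on manifolds; since $\mu\ll\vol$, the set $\Omega$ of differentiability points of $\phi$ has full $\mu$-measure.

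The heart of the argument is the pointwise identification of $F$. Fix $(x,y)\in\supp\pi$ with $x\in\Omega$. From $\phi(\cdot)+\psi(y)\le c(\cdot,y)$ with equality at $x$, the function $z\mapsto c(z,y)-\phi(z)$ attains its global minimum at $x$, whence $\nabla\phi(x)\in\partial^-(c(\cdot,y))(x)$. Now $z\mapsto c(z,y)=d_M(z,y)^2/2$ is locally semiconcave on $M$ (a consequence of Hessian comparison), so it is differentiable at any point where its subdifferential is nonempty; hence $c(\cdot,y)$ is differentiable at $x$ and $\nabla_x c(x,y)=\nabla\phi(x)$. Differentiability of $c(\cdot,y)$ at $x$ forces $y\notin\cut(x)$, and then $\nabla_x c(x,y)=-\exp_x^{-1}(y)$, minus the initial velocity of the minimal geodesic from $x$ to $y$; therefore $y=\exp_x(-\nabla\phi(x))=F(x)$. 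As $\pi(\Omega\times B)=1$, this shows $\pi=(\id,F)_\#\mu$, and projecting onto the second marginal gives $F_\#\mu=\nu$.

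For uniqueness, any optimal plan $\tilde\pi$ again satisfies $\phi(x)+\psi(y)=c(x,y)$ on its support by complementary slackness, so the previous paragraph applies verbatim and $\tilde\pi=(\id,F)_\#\mu$; in particular the optimal Kantorovich plan is unique. If $G$ is a Borel map with $G_\#\mu=\nu$ attaining the Monge infimum, then $(\id,G)_\#\mu$ is an optimal plan, hence equals $(\id,F)_\#\mu$, and since a probability measure cannot be concentrated on the graphs of two maps differing on a set of positive measure, $G=F$ $\mu$-a.e.\ Finally, optimality of $\pi=(\id,F)_\#\mu$ gives $W_2^2(\mu,\nu)=\int_{M\times M}d_M^2\,\dx\pi=\int_M d_M(x,F(x))^2\,\dx\mu(x)$. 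The main obstacle is the geometric step of the third paragraph: upgrading a.e.\ differentiability of the Lipschitz potential to differentiability of the squared distance off the cut locus and inverting the resulting first-order condition through $\exp_x$ — this is exactly what identifies $F=\exp(-\nabla\phi)$ as the correct transport map, and it is where the completeness of $M$ and the absolute continuity $\mu\ll\vol$ enter.
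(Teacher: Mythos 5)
The paper states this result as a citation of McCann~\cite{mccann01} and gives no proof of its own; your proposal is a correct reconstruction of McCann's original argument (Kantorovich duality, Lipschitz/Rademacher differentiability of the $c$-concave potential, semiconcavity of $d_M^2$ to upgrade subdifferentiability to differentiability, and inversion through $\exp_x$ off the cut locus). The only minor imprecision is the claim that differentiability of $c(\cdot,y)$ at $x$ forces $y\notin\cut(x)$ — what it actually yields is uniqueness of the minimising geodesic from $x$ to $y$, which is all that is needed to write $\nabla_x c(x,y)=-\exp_x^{-1}(y)$ — but this does not affect the argument.
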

The natural extension on McCann's notion of displacement interpolation~\cite{mccann1997convexity} to the manifold setting is given in the following definition.
\begin{defn}[Optimal interpolant]~\thlabel{def:opip}
Let $M$ be a complete Riemannian manifold. Fix two Borel probability measures $\mu  \ll \vol$ and $\nu$ on $M$ and two compact
subsets $A,B \subset M$ containing the supports of $\mu$ and  $\nu$, respectively. We define the \emph{optimal interpolant} to
be the map $t \mapsto \mu(t)$ for $t \in[0,1]$ such that $\mu(t) = (F_t)_{\#}\mu$ and $F_t= \exp_x\bra*{-t \nabla \phi(x)}$. Here
$\phi \in \cI^c(\bar{A},B)$ is the so-called Kantorovich potential between $\mu$ and $\nu$ from~\thref{thm:monge}.
\end{defn}
We are finally in a position to conclude this section with the following results from \cite{cordero01} about the properties of
the optimal interpolant.
\begin{lem} \thlabel{lem:oiprop}
Let $M$ be a complete Riemannian manifold. Fix two Borel probability measures $\mu  \ll \vol$ and $\nu$ on $M$ and two compact
subsets $A,B \subset M$ containing the supports of $\mu$ and  $\nu$, respectively. Then the following two results hold
\begin{tenumerate}
\item \emph{Optimality of the optimal interpolant.} The map $F_t$ defined in~\thref{def:opip} is the minimiser of the quadratic cost between $\mu(t)$ and $\mu$ among all maps pushing forward $\mu$ to $\mu(t)$ for all $t \in [0,1]$. \label{lem:oi1}
\item \emph{Absolute continuity of the interpolant.} If $\mu$ and $\nu$ are compactly supported absolutely continuous with respect to the Riemannian volume, then so is their optimal interpolant $t \to \mu(t)$ for all $t \in[0,1]$. 
\end{tenumerate}
\end{lem}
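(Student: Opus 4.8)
The plan is to reduce both assertions to a single geometric fact about the interpolant: for $\mu$-a.e.\ $x$ the curve $s\mapsto F_s(x)$, $s\in[0,1]$, is a constant-speed minimising geodesic in $M$ from $x$ to $F(x)$. Granting this, part~\ref{lem:oi1} follows from the metric structure of $(\cP_2(M),W_2)$ together with the uniqueness clause of \thref{thm:monge}, and the absolute-continuity statement follows from a Jacobian estimate for $F_t$. I expect the geodesic fact — in particular ruling out that the optimal target meets the cut locus of its source — to be the real difficulty; it is, however, exactly the content of the results of \cite{mccann01,cordero01} (and, in the Euclidean case, of \cite{mccann1997convexity}), which I shall invoke rather than reprove.

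\textbf{Step 1 (geodesic interpolation).} Since $\phi\in\cI^c(A,B)$ is $c$-concave it is locally semiconcave, hence differentiable $\vol$-a.e., and therefore $\mu$-a.e.\ because $\mu\ll\vol$. Fix such an $x$ and let $\bar y\in B$ realise the infimum in $\phi(x)=\inf_y\bigl(c(x,y)-\psi(y)\bigr)$; differentiability of $\phi$ at $x$ forces $\nabla\phi(x)=\nabla_x c(x,\bar y)$, and for $c=d_M^2/2$ one has $\nabla_x c(x,\bar y)=-\exp_x^{-1}(\bar y)$ as soon as $\bar y$ lies off the cut locus of $x$. The input I take from \cite{mccann01,cordero01} is precisely that, for $\mu$-a.e.\ $x$, the optimal target $\bar y=F(x)$ is joined to $x$ by a unique minimising geodesic and in particular avoids the cut locus. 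Hence $-\nabla\phi(x)=\exp_x^{-1}(F(x))$, the curve $s\mapsto F_s(x)=\exp_x(-s\nabla\phi(x))$ parametrises this minimising geodesic, $\abs{\nabla\phi(x)}=d_M(x,F(x))$, and therefore
\[
 d_M\bigl(F_s(x),F_t(x)\bigr)=(t-s)\,d_M\bigl(x,F(x)\bigr),\qquad 0\le s\le t\le 1 .
\]

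\textbf{Step 2 (optimality of the interpolant).} Using $(F_s,F_t)_{\#}\mu$ as a competitor coupling and Step~1,
\[
 W_2^2\bigl(\mu(s),\mu(t)\bigr)\le\int_M d_M\bigl(F_s(x),F_t(x)\bigr)^2\dx{\mu}(x)=(t-s)^2\int_M d_M\bigl(x,F(x)\bigr)^2\dx{\mu}(x)=(t-s)^2\,W_2^2(\mu,\nu),
\]
the last equality by \thref{thm:monge}. Inserting $s,t$ into a partition $0=t_0<\dots<t_N=1$ and comparing $\sum_i W_2(\mu(t_i),\mu(t_{i+1}))\le W_2(\mu,\nu)$ with the triangle inequality $W_2(\mu,\nu)\le\sum_i W_2(\mu(t_i),\mu(t_{i+1}))$ — valid since $(\cP_2(M),W_2)$ is a metric space by \thref{prop:geodesic} — forces every inequality to be an equality. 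Thus $W_2(\mu(s),\mu(t))=(t-s)W_2(\mu,\nu)$ and, the competitor attaining the infimum, $(F_s,F_t)_{\#}\mu$ is optimal. Taking $s=0$ shows $(\id,F_t)_{\#}\mu$ is an optimal coupling of $\mu$ and $\mu(t)$; since $\mu\ll\vol$, the uniqueness part of \thref{thm:monge} identifies $F_t$ with \emph{the} minimiser of the quadratic cost between $\mu$ and $\mu(t)$, which is part~\ref{lem:oi1}. (En route one also sees that $t\mapsto\mu(t)$ is a constant-speed minimising geodesic in $\cP_2(M)$, the property in which \thref{def:opip} enters \thref{thm:mpwi}.)

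\textbf{Step 3 (absolute continuity).} For $t\in\{0,1\}$ there is nothing to show since $\mu(0)=\mu$ and $\mu(1)=\nu$ are absolutely continuous by hypothesis; fix $t\in(0,1)$. One checks, as in \cite{cordero01}, that $t\phi$ is again $c$-concave, hence locally semiconcave, so at $\mu$-a.e.\ $x$ it admits a second-order Alexandrov expansion and $F_t$ is differentiable there. The Jacobian $\det DF_t(x)$ splits as the Jacobi-field Jacobian of $\exp_x$ along the minimising geodesic of Step~1 — which is strictly positive for $t<1$, since a minimising geodesic carries no conjugate point before its endpoint — times the linearisation of $t\nabla\phi$ at $x$, which is bounded below in terms of $(1-t)^{\dim M}$ using the semiconcavity bound on $D^2\phi$. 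Hence $\det DF_t(x)>0$ for $\mu$-a.e.\ $x$, and the Monge–Ampère change-of-variables formula shows $\mu(t)=(F_t)_{\#}\mu$ has a density with respect to $\vol$, i.e.\ $\mu(t)\ll\vol$. This is the substance of the corresponding statements in \cite{cordero01}, whose Jacobian machinery I have only outlined; as flagged above, the essential obstacle is the a.e.\ regularity of $\phi$ and the cut-locus control underpinning Step~1.
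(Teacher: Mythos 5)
The paper does not prove this lemma at all: it is imported verbatim as ``results from \cite{cordero01}'' (with the map $F$ itself coming from \cite{mccann01}), so there is no in-paper argument to compare against. Your sketch is a faithful reconstruction of the proof in those references, and you have correctly located where the real work lies — the $\vol$-a.e.\ differentiability of the $c$-concave potential, the fact that $s\mapsto\exp_x(-s\nabla\phi(x))$ is a minimising geodesic reaching $F(x)$ at $s=1$, and the injectivity-plus-Jacobian (Monge--Amp\`ere) machinery behind part (b) — while delegating exactly those points back to \cite{mccann01,cordero01}, which is no worse than what the paper itself does. Step 2 is the one part you carry out in full, and it is complete and correct: the gluing $(F_s,F_t)_\#\mu$ plus the partition/triangle-inequality argument pins down $W_2(\mu(s),\mu(t))=(t-s)W_2(\mu,\nu)$, and the uniqueness clause of \thref{thm:monge} (applicable because $\mu(s)\ll\vol$ for $s<1$, or directly at $s=0$) then identifies $F_t$ as \emph{the} optimal map, which is precisely part (a).

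Two small points of precision. First, the statement that the optimal target $F(x)$ ``avoids the cut locus of $x$'' for $\mu$-a.e.\ $x$ is slightly stronger than what McCann's lemma gives (and than what you need): differentiability of $\phi$ at $x$ yields that $-\nabla\phi(x)$ is an initial velocity of a \emph{minimising} geodesic with $\abs{\nabla\phi(x)}=d_M(x,F(x))$, so $F_t(x)$ for $t\in(0,1)$ lies strictly before the cut point, but $F_1(x)$ itself may sit on the cut locus; your Step 1 identity for $d_M(F_s(x),F_t(x))$ survives unchanged. Second, in Step 3 the positivity of $\det DF_t$ alone does not give $(F_t)_\#\mu\ll\vol$; one also needs the $\mu$-a.e.\ injectivity of $F_t$ for $t<1$ (the no-crossing property of interior points of minimising geodesics) so that the change-of-variables inequality $\vol(F_t(E))\geq\int_E\det DF_t\dx{\vol}$ can be applied. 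You fold this into the ``Jacobian machinery'' you cite, which is acceptable, but it deserves explicit mention since it is a separate geometric input from the Jacobian lower bound.
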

\section{A mountain pass theorem in \texorpdfstring{$\cP_2(M)$}{P(M)}} \label{s:mpw}
We now turn to the question of obtaining a notion of mountain passes for l.s.c functions. We fix our metric space
to be $\cX = \cP_2(M)$, where $M$ is now a complete connected smooth Riemannian manifold, and we equip it with the $d=W_2$
transportation distance which makes it a complete, separable metric space~\cite{villani2008optimal}. The functionals under consideration satisfy a geodesic $\lambda$-convexity assumption introduced in the following definition.

\begin{defn}[Geodesic $\lambda$-convexity]\thlabel{def:lconvex}
A proper l.s.c function $I : \cP_2(M) \to \R \cup \set{+\infty}$ is said to be \emph{$\lambda$-geodesically convex} for some 
$\lambda \in \R$, if for any $\mu_0,\mu_1 \in \cP_2(M) \cap D(I)$ it holds that $I(\mu(t))$, where $\mu \in C([0,1];\cP_2(M))$  is any unit speed minimising geodesic between $\mu_0$ and~$\mu_1$ (cf.~\thref{prop:geodesic}), satisfies
\[
I(\mu(t)) \leq (1-t) I(\mu_0) + t I(\mu_1) -\frac{\lambda}{2}t (1-t) W_2^2(\mu_0,\mu_1) \, \quad \forall t \in[0,1].
\]
\end{defn}
The following lemma, whose proof is similar in spirit to \cite[Theorem 3.13]{degiovanni94}, shows that the weak metric slope of $\cG_I$ is non-zero for geodesically $\lambda$-convex functionals for $(\mu,\xi) \in \epi(I)$ such that $\xi>I(\mu)$. In particular, any critical point of $\cG_I$, if present, satisfies $\xi=I(\mu)$.
\begin{lem}~\thlabel{lem:slopeaway}
Let $I : \cP_2(M) \to \R \cup\set{+\infty}$ be a proper, l.s.c, and $\lambda$-geodesically convex function.
Then, it holds for all $\mu\in \cP_2(M)$ and $\xi\in \R$ that
\begin{align}
\abs{d\cG_I}(\mu, \xi) =1 \qquad \textrm{if }\xi > I(\mu) \, .
\end{align}
In particular, any critical point $(\mu,\xi)$ of $\cG_I$ satisfies $\xi=I(\mu)$.
\end{lem}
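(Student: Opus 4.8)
The plan is to apply~\thref{prop:prop2.5} to the functional $\cG_I$ itself, viewed on the complete metric space $(\epi(I),d_{\epi})$. Since $\cG_I(u,\xi)=\xi$ is $1$-Lipschitz for $d_{\epi}$ one already has $\abs{d\cG_I}(\mu,\xi)\le 1$ (noted after~\thref{def:epigraph}), and since $\cG_I$ is continuous the slope computed for it via~\thref{def:wms} agrees with the one from~\thref{def:rp}; so it suffices to prove $\abs{d\cG_I}(\mu,\xi)\ge\delta$ for every $\delta\in(0,1)$, which is exactly what~\thref{prop:prop2.5} delivers once we exhibit a suitable descent map near $(\mu,\xi)$. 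The advantage of routing through~\thref{prop:prop2.5} is that it only demands a map satisfying two inequalities, so I may pick minimising geodesics pointwise and need not arrange a continuous selection.

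Set $\eps_0:=\xi-I(\mu)>0$ (in particular $\mu\in D(I)$). Fix a small $\rho>0$ with $\rho\le\eps_0/4$ and $\tfrac{\abs{\lambda}}{2}\rho^2\le\eps_0/4$, and let $U$ be the open $d_{\epi}$-ball of radius $\rho$ about $(\mu,\xi)$. For $(v,\zeta)\in U\cap\epi(I)$ one has $v\in D(I)$, $I(v)\le\zeta$, $W_2(v,\mu)<\rho$ and $\zeta-I(\mu)\ge\eps_0-\rho\ge\tfrac34\eps_0$. Choose any unit-speed minimising geodesic $\sigma_v\colon[0,1]\to\cP_2(M)$ from $v$ to $\mu$ (it exists by~\thref{prop:geodesic}). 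Geodesic $\lambda$-convexity gives $I(\sigma_v(t))\le(1-t)I(v)+tI(\mu)-\tfrac{\lambda}{2}t(1-t)W_2^2(v,\mu)$, and estimating $I(v)\le\zeta$, $-\tfrac{\lambda}{2}t(1-t)W_2^2(v,\mu)\le\tfrac{\abs{\lambda}}{2}t\rho^2$ and $\zeta-I(\mu)\ge\tfrac34\eps_0$ this yields $I(\sigma_v(t))\le\zeta-\tfrac{\eps_0}{2}t$ for all $t\in[0,1]$. Hence, with $K:=\sqrt{\rho^2+\tfrac{\eps_0^2}{4}}$ and $\alpha:=K$, the map
\[
\Psi\bra[\big]{(v,\zeta),s}:=\bra[\Big]{\sigma_v(s/K),\ \zeta-\tfrac{\eps_0}{2K}s},\qquad (v,\zeta)\in U\cap\epi(I),\ s\in[0,\alpha],
\]
takes values in $\epi(I)$ (the height at parameter $s$ is $\zeta-\tfrac{\eps_0}{2}(s/K)\ge I(\sigma_v(s/K))$), and since $\sigma_v$ has constant speed $W_2(v,\mu)$ one computes $d_{\epi}(\Psi((v,\zeta),s),(v,\zeta))=\tfrac{s}{K}\sqrt{W_2^2(v,\mu)+\tfrac{\eps_0^2}{4}}\le s$, while $\cG_I(v,\zeta)-\cG_I(\Psi((v,\zeta),s))=\tfrac{\eps_0}{2K}s=(1+4\rho^2/\eps_0^2)^{-1/2}s$.

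Applying~\thref{prop:prop2.5} (with $b:=\xi+1$, $\alpha$ and $U$ as above, and $\delta:=(1+4\rho^2/\eps_0^2)^{-1/2}$) gives $\abs{d\cG_I}(\mu,\xi)\ge(1+4\rho^2/\eps_0^2)^{-1/2}$; letting $\rho\downarrow0$ forces $\abs{d\cG_I}(\mu,\xi)\ge1$, hence $\abs{d\cG_I}(\mu,\xi)=1$ whenever $\xi>I(\mu)$. For the final assertion, a critical point $(\mu,\xi)$ of $\cG_I$ satisfies $\abs{d\cG_I}(\mu,\xi)=0$ by definition; since $\xi\ge I(\mu)$ for every $(\mu,\xi)\in\epi(I)$ and the case $\xi>I(\mu)$ has just been ruled out, it must be that $\xi=I(\mu)$.

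The step I expect to need the most care is verifying the hypotheses of~\thref{prop:prop2.5} in the precise form used — above all that no continuity of $\Psi$ is required, which is what makes the pointwise choice $v\mapsto\sigma_v$ harmless even though minimising geodesics in $\cP_2(M)$ need not be unique nor depend continuously on their endpoints. Should a continuous deformation be needed (e.g.\ to invoke~\thref{def:rp} directly), one can replace the geodesics $\sigma_v$ by the metric gradient-flow semigroup $t\mapsto S_t v$ of $I$, which exists and is jointly continuous precisely because $I$ is proper, l.s.c.\ and $\lambda$-convex; running it for a short time and combining with vertical descent, the $\lambda$-convexity estimate above is replaced by the evolution-variational inequality together with $W_2(S_tv,v)\le\sqrt{t\,(I(v)-I(S_tv))}$, which again produces a descent of efficiency arbitrarily close to $1$. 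Two routine points to track along the way are that the convexity bound must hold uniformly over $U$ (handled by also shrinking $\rho$ so that $\tfrac{\abs{\lambda}}{2}\rho^2\le\eps_0/4$) and that the deforming curve stays in $\epi(I)$ at every intermediate time, not merely at its endpoint — automatic here since $I(\sigma_v(t'))\le\zeta-\tfrac{\eps_0}{2}t'$ holds for every $t'\le s/K$.
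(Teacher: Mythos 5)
Your proposal is correct and is essentially the paper's own argument: both proofs deform a whole $d_{\epi}$-ball around $(\mu,\xi)$ by sliding each nearby point along a unit-speed minimising geodesic towards the fixed centre $\mu$ while lowering the height coordinate at a rate just below $1$, use $\lambda$-geodesic convexity to keep the deformed curve inside $\epi(I)$, and shrink the ball to push the descent efficiency to $1$; the only differences are cosmetic (your uniform normalisation by $K$ versus the paper's point-dependent $\Lambda$, and invoking \thref{prop:prop2.5} for $\cG_I$ rather than verifying \thref{def:rp} directly, which amounts to the same thing since $\cG_I$ is continuous). The continuity-of-geodesic-selection caveat you flag is real but applies verbatim to the paper's own map as well, so it is not a gap relative to the paper's standard.
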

\begin{proof}
Let $(\mu_1,\xi) \in \epi(I)$ be such that $\xi = I(\mu_1) + 2\eps$ for some $\eps>0$. We define for any $\delta>0$ the map $\Psi:B_\delta^{d_{\epi}}\bra{\mu_1,\xi} \times [0,\eps] \to \epi(I)$ as follows
\begin{align}\label{e:Psi:p0}
\Psi\bra[\big]{(\mu_0,\alpha),t} =  \bra*{\mu\bra*{\frac{t}{\Lambda}}, \alpha - \frac{t}{\Lambda}\bra*{\alpha -\frac{|\lambda|}{2} W_2^2(\mu_0,\mu_1)- I(\mu_1)}} \, ,
\end{align}
where 
\[
\Lambda=\sqrt{W_2^2(\mu_0,\mu_1) + \abs*{\bra*{\alpha -\frac{|\lambda|}{2} W_2^2(\mu_0,\mu_1)- I(\mu_1)}}^2 } 
\]
and $\mu(\cdot)$ is a unit speed minimising geodesic between $\mu_0$ and $\mu_1$. We need to first verify that $t/\Lambda \in [0,1]$. Since $\xi \geq I(\mu_1)+2\eps$, we find $\delta_0=\delta_0(\eps)$ such that for all $\delta \in (0,\delta_0)$ it holds
\begin{equation}\label{e:eps:p0}
 \eps \leq \xi - I(\mu_1) - \frac{|\lambda|}{2 }\delta^2 - 2 \delta  \, .
\end{equation}
The above estimate yields $\Lambda\geq \eps$ implying $\frac{t}{\Lambda} \in [0, 1]$ and so $\mu\bra{\frac{t}{\Lambda}}$ is well-defined, provided that $\delta\in (0,\delta_0)$. 
We also have, from~\thref{prop:geodesic}, that
\begin{align}
d_{\epi}\bra[\Big]{\Psi((\mu_0,\alpha),t),(\mu_0,\alpha)}=t \, .
\end{align}
Thus, the map $\Psi$ satisfies condition (1) of~\thref{def:rp}. We still have to check that $\Psi\bra[\big]{(\mu_0,\alpha),t} \in \epi(I)$. Indeed we have from the definition of $\lambda$-geodesic convexity
\begin{align}
I\bra[\big]{\mu\bra{\frac{t}{\Lambda}}} &\leq I(\mu_0) + \frac{t}{\Lambda}\bra[\big]{I(\mu_1)-I(\mu_0)} - \frac{\lambda}{2}\frac{t}{\Lambda}\bra*{1-\frac{t}{\Lambda}} W_2^2(\mu_0,\mu_1) \\
& \leq \alpha - \frac{t}{\Lambda}\bra*{\alpha -  I(\mu_1)} + \frac{|\lambda|}{2}\frac{t}{\Lambda} W_2^2(\mu_0,\mu_1) \\
& = \alpha - \frac{t}{\Lambda}\bra*{\alpha - \frac{|\lambda|}{2} W_2^2(\mu_0,\mu_1)-  I(\mu_1)}    \, .
\end{align}
Finally, we can proceed from~\eqref{e:Psi:p0} to the following estimate
\begin{align}
\cG_I\bra*{\Psi((\mu_0,\alpha),t)} &= \alpha - \frac{t}{\Lambda}\bra*{\alpha - \frac{|\lambda|}{2} W_2^2(\mu_0,\mu_1)-  I(\mu_1)}  \\
& \leq  \cG_I((\mu_0,\alpha)) - t \frac{\xi  -  I(\mu_1) -\delta -  \delta^2 \frac{|\lambda|}{2}}{\sqrt{\delta^2 + \abs*{\xi - I(\mu_1) + \delta + \delta^2 \frac{|\lambda|}{2}}^2 }}  \,.
\end{align}
Thanks to~\eqref{e:eps:p0}, we can make $\delta$ arbitrarily small and obtain that $\abs{d \cG_I}(\mu_1, \xi) \geq 1$ from~\thref{def:rp}~$(2)$. Since
$\abs{d \cG_I}(\mu_1, \xi) \leq 1$ by~\thref{def:epigraph}, the result follows.
\end{proof}
Having showed that the weak metric slope of $\cG_I$ is a constant equal to one for all points $(\mu,\xi) \in \epi(I)$ such that $\xi>I(\mu)$, we investigate how the critical points of $I$ defined through the weak metric slope relate to other relevant notions. Specifically, we compare it to the notion of  critical point derived from the strong metric slope used in theory of gradient flows~\cite{ambrosio2008gradient}. This theory makes rigorous the notion of the Wasserstein gradient discussed in the introduction. We briefly introduce some terminology.  Let $I: \cP_2(M) \to \R \cup \set{+\infty}$ be a proper, l.s.c, and $\lambda$-geodesically convex.
\begin{defn}[Absolutely continuous curves]~\thlabel{def:ac}
A curve $\mu: [a,b] \subset \R \to \cP_2(M)$ is said to belong to $AC^p([a,b];\cP_2(M))$ for some $p \in[1,+\infty]$ if there
exists $m \in \Leb^p([a,b])$ such that
\begin{align}
W_2(\mu(s),\mu(t)) \leq \int_s^t m(r) \dx{r}, \quad a \leq s \leq t \leq b \, .
\label{eq:ac}
\end{align}
If $p=1$, then $\mu$ is said to be an \emph{absolutely continuous curve}.
\end{defn}
\begin{thm}[Metric derivative]
If $\mu:[a,b] \to \cP_2(M)$ is an absolutely continuous curve then the limit
\[
\abs{\mu'}(t)= \lim_{s \to t} \frac{W_2(\mu(s),\mu(t))}{\abs{t-s}} \, ,
\]
exists for a.e. $t$ and is called the \emph{metric derivative} of $\mu$. Additionally, $|\mu'| \in \Leb^1([a,b])$ and is admissible as an $m$
in~\eqref{eq:ac}. In fact it is the minimal admissible $m$, i.e. 
\[
|\mu'|(t) \leq m(t)
\]
for $t$ a.e. where $m$ satisfies~\eqref{eq:ac}.
\end{thm}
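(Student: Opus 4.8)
The plan is to reproduce the classical construction of the metric derivative as a countable supremum of derivatives of real-valued absolutely continuous functions (this is essentially \cite[Theorem~1.1.2]{ambrosio2008gradient}, the ambient space $\cP_2(M)$ playing no special role). By \thref{def:ac} there is $m \in \Leb^1([a,b])$ satisfying $\eqref{eq:ac}$; absolute continuity of the Lebesgue integral then forces $t \mapsto \mu(t)$ to be (uniformly) continuous, which is the only qualitative input the argument requires. I would then fix a countable dense subset $\set{t_i}_{i \in \N} \subset [a,b]$ and set $g_i(t) := W_2(\mu(t),\mu(t_i))$. The triangle inequality for $W_2$ together with $\eqref{eq:ac}$ gives $\abs{g_i(t)-g_i(s)} \leq W_2(\mu(s),\mu(t)) \leq \int_s^t m(r)\dx r$ for $s \leq t$, so each $g_i$ is absolutely continuous on $[a,b]$, hence differentiable a.e., and the Lebesgue differentiation theorem applied to $m$ yields $\abs{g_i'} \leq m$ a.e. Define the measurable function
\[
  \tilde m(t) := \sup_{i \in \N} \abs{g_i'(t)} \,,
\]
which satisfies $\tilde m \leq m$ a.e., so $\tilde m \in \Leb^1([a,b])$.

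Next I would verify that $\tilde m$ is admissible in $\eqref{eq:ac}$ and is minimal. For admissibility, fix $s < t$ and choose $t_i \in (s,t)$ with $t_i \to s$; then $g_i(t_i) = 0$, while by continuity of $\mu$ one has $W_2(\mu(t),\mu(t_i)) \to W_2(\mu(t),\mu(s))$, i.e. $g_i(t) \to W_2(\mu(s),\mu(t))$. Since $g_i(t) = g_i(t) - g_i(t_i) = \int_{t_i}^t g_i'(r)\dx r \leq \int_{t_i}^t \tilde m(r)\dx r$ and $\int_{t_i}^t \tilde m \to \int_s^t \tilde m$ as $t_i \to s$ (dominated convergence, $\tilde m \in \Leb^1$), passing to the limit gives $W_2(\mu(s),\mu(t)) \leq \int_s^t \tilde m(r)\dx r$. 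For minimality, observe that the bound $\abs{g_i'} \leq m$ a.e. used above holds for \emph{any} function $m$ satisfying $\eqref{eq:ac}$, simply because $g_i$ is absolutely continuous with $\abs{g_i(t)-g_i(s)} \leq \int_s^t m$ regardless; taking the supremum over $i$ gives $\tilde m \leq m$ a.e., so $\tilde m$ is the minimal admissible function.

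It remains to identify $\tilde m$ with the metric derivative. Let $t$ be a Lebesgue point of $\tilde m$ at which every $g_i$ is differentiable; this excludes only a null set. From admissibility of $\tilde m$,
\[
  \frac{W_2(\mu(s),\mu(t))}{\abs{t-s}} \leq \frac{1}{\abs{t-s}}\abs*{\int_t^s \tilde m(r)\dx r} \longrightarrow \tilde m(t) \qquad\text{as } s \to t \,,
\]
so $\limsup_{s\to t} W_2(\mu(s),\mu(t))/\abs{t-s} \leq \tilde m(t)$. For the reverse bound, the triangle inequality gives $W_2(\mu(s),\mu(t)) \geq \abs{g_i(s)-g_i(t)}$, hence $\liminf_{s\to t} W_2(\mu(s),\mu(t))/\abs{t-s} \geq \abs{g_i'(t)}$ for each $i$, and taking the supremum over the countable family $\set{t_i}$ yields $\liminf_{s\to t} W_2(\mu(s),\mu(t))/\abs{t-s} \geq \tilde m(t)$. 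Combining the two inequalities shows the limit defining $\abs{\mu'}(t)$ exists a.e. and equals $\tilde m(t)$, which is in $\Leb^1([a,b])$, admissible in $\eqref{eq:ac}$, and minimal — exactly the asserted properties.

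The only genuinely delicate point is the density step used to pass from the countable family $\set{t_i}$ to arbitrary endpoints $s,t$: one must use that $W_2(\mu(t),\mu(t_i)) \to W_2(\mu(t),\mu(s))$ as $t_i \to s$, which relies on the continuity of $\mu$ (itself a consequence of $\eqref{eq:ac}$ and absolute continuity of the Lebesgue integral). Everything else is routine bookkeeping with the Lebesgue differentiation theorem, and one may alternatively just invoke \cite[Theorem~1.1.2]{ambrosio2008gradient} directly.
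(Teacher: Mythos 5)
Your proof is correct and is exactly the classical argument of \cite[Theorem 1.1.2]{ambrosio2008gradient}, which is the result the paper simply recalls without proof (the only metric-space input being the triangle inequality and separability of the image of a continuous curve on $[a,b]$, so $\cP_2(M)$ indeed plays no special role). All the steps — the countable family $g_i$, the bound $\abs{g_i'}\leq m$ a.e., admissibility and minimality of $\tilde m=\sup_i\abs{g_i'}$, and the two-sided identification of the limit at Lebesgue points — are carried out correctly.
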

Now we introduce the notion of the (strong) metric slope.
\begin{defn}[Metric slope]~\thlabel{def:ms}
The \emph{metric slope} $\abs{\partial I}$ of $I$ at $\mu \in \cP_2(M)$ is defined as 
\begin{align}
\abs{\partial I}(\mu)=
\begin{cases}
\limsup\limits_{\nu \to \mu} \frac{(I(\mu)-I(\nu))_+}{W_2(\mu,\nu)} & \mu \in D(I)\\
+\infty & \textrm{otherwise}  \, .
\end{cases}
\end{align}
\end{defn}
The metric slope is defined with a positive part, since we are interested in (negative) gradient flows decreasing the energy functional $I$ (see~\cite[Chapter 10]{ambrosio2008gradient}).
Finally, we are in a position to define the notion of a curve of maximal slope.
\begin{defn}[Curves of maximal slope]
A curve $\mu \in AC^2([0,+\infty); \cP_2(M))$ is a \emph{curve of maximal slope} of the function $I$ if the following energy dissipation inequality is satisfied 
\begin{align}
\frac{1}{2}\int_s^t |\mu'|^2(r) \dx{r} + \frac{1}{2} \int_s^t |\partial \phi|^2 (\mu_r) \dx{r} \leq I(\mu(s)) -I(\mu(t))) \,,
\end{align}
for all $0\leq s \leq t <+\infty$. A curve $\mu$ is a \emph{stationary curve} of maximal slope if it is a curve of maximal slope and $\mu(t)=\mu(s)$ for all $s,t \in[0,+\infty)$.
\end{defn}
We have the following straightforward corollary.
\begin{cor}
A curve of maximal slope $\mu$ of a function $I$ is stationary if and only if $\abs{\partial I}(\mu)=0$.
\end{cor}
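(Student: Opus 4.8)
The plan is to read both implications off the energy dissipation inequality (EDI) that defines a curve of maximal slope, together with the classical fact that, because $I$ is proper, l.s.c.\ and $\lambda$-geodesically convex, the metric slope $\abs{\partial I}$ (see~\thref{def:ms}) is a strong upper gradient for $I$; consequently, along any curve of maximal slope the EDI is in fact an equality and $\abs{\mu'}(r)=\abs{\partial I}(\mu(r))$ for a.e.\ $r\ge 0$ (cf.~\cite{ambrosio2008gradient}). When $\mu$ is a curve, I read ``$\abs{\partial I}(\mu)=0$'' as $\abs{\partial I}(\mu(t))=0$ for every $t$; note that once $\mu$ is shown to be stationary it is a constant curve, so this becomes a statement about a single point of $\cP_2(M)$.

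First I would do the ``only if'' direction, which needs only the EDI as written. If $\mu$ is stationary, then $\mu(r)\equiv\mu_0$ for some fixed $\mu_0$, so $W_2(\mu(s),\mu(t))=0$ for all $s,t$ — hence $\abs{\mu'}(r)=0$ a.e.\ — and $I(\mu(s))=I(\mu(t))=I(\mu_0)$. Substituting into the EDI gives, for all $0\le s\le t$,
\[
\tfrac12 (t-s)\,\abs{\partial I}^2(\mu_0)\;=\;\tfrac12\int_s^t \abs{\partial I}^2(\mu(r))\dx{r}\;\le\; I(\mu(s))-I(\mu(t))\;=\;0,
\]
so, choosing $t>s$, $\abs{\partial I}(\mu_0)=0$, i.e.\ $\abs{\partial I}(\mu(t))=0$ for all $t$.

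For the converse, assume $\abs{\partial I}(\mu(r))=0$ for all $r$. By the identity $\abs{\mu'}(r)=\abs{\partial I}(\mu(r))$ we get $\abs{\mu'}\equiv 0$ a.e., and since $\mu\in AC^2$ and $\abs{\mu'}$ is an admissible integrand in~\eqref{eq:ac},
\[
W_2(\mu(s),\mu(t))\;\le\;\int_s^t \abs{\mu'}(r)\dx{r}\;=\;0\qquad\text{for all }0\le s\le t,
\]
so $\mu(s)=\mu(t)$ for all $s,t$ and $\mu$ is stationary.

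The only step that is not a one-line consequence of the stated EDI is the identity $\abs{\mu'}=\abs{\partial I}(\mu(\cdot))$ a.e.\ along curves of maximal slope — equivalently, that $\abs{\partial I}$ is a strong upper gradient for the $\lambda$-convex functional $I$ — and this is the single (mild) obstacle, as well as the only place geodesic $\lambda$-convexity is used. If one prefers not to quote that identity, the converse can alternatively be obtained by noting that the EDI with the vanishing slope term gives $\tfrac12\int_s^t\abs{\mu'}^2\le I(\mu(s))-I(\mu(t))$, so $r\mapsto I(\mu(r))$ is non-increasing, while the strong upper gradient bound $\abs{I(\mu(s))-I(\mu(t))}\le\int_s^t\abs{\partial I}(\mu(r))\abs{\mu'}(r)\dx{r}=0$ forces it to be constant, whence $\int_s^t\abs{\mu'}^2\le 0$ and $\mu$ is constant as before.
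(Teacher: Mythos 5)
Your proof is correct. The paper offers no proof of this corollary (it is stated as ``straightforward''), so there is no argument to compare against; yours is the natural one. The ``only if'' direction is, as you say, immediate from the energy dissipation inequality with $\abs{\mu'}\equiv 0$ and $I(\mu(s))-I(\mu(t))=0$. You are also right to isolate the one genuinely non-formal point: the ``if'' direction does not follow from the EDI alone, since a priori the energy could still decrease along a curve whose slope vanishes, and one needs the strong upper gradient property of $\abs{\partial I}$ --- valid here because $I$ is proper, l.s.c.\ and $\lambda$-geodesically convex, which is the standing assumption in that part of Section~4 --- to conclude $I(\mu(\cdot))$ is constant and hence $\int_s^t\abs{\mu'}^2=0$. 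Your fallback version, which uses only the upper gradient inequality $\abs{I(\mu(s))-I(\mu(t))}\le\int_s^t\abs{\partial I}(\mu(r))\abs{\mu'}(r)\dx{r}$ rather than the a.e.\ identity $\abs{\mu'}=\abs{\partial I}(\mu(\cdot))$, is the cleaner of the two and is what I would keep. A still more elementary route, if you want to avoid citing the upper gradient machinery, is to use that $\abs{\partial I}(\nu)=0$ for a $\lambda$-convex functional forces $I(\mu(s))-I(\mu(t))\le\frac{\abs{\lambda}}{2}W_2^2(\mu(s),\mu(t))\le\frac{\abs{\lambda}}{2}(t-s)\int_s^t\abs{\mu'}^2\dx{r}$, which combined with the EDI yields $\int_s^t\abs{\mu'}^2\dx{r}=0$ whenever $t-s<1/\abs{\lambda}$.
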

Using all these notions we can finally compare the weak metric slope with the metric slope.
\begin{lem}[Equivalence of the two notions of slope]~\thlabel{prop:equivslope}
Let $I : \cP_2(M) \to \R \cup \set{+\infty}$ be a proper, l.s.c, and $\lambda$-geodesically convex functional. Then for $\mu \in D(I)$ it holds that $|dI|(\mu)=|\partial I|(\mu)$.
 \end{lem}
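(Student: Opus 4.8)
The plan is to establish the two inequalities $\abs{dI}(\mu) \le \abs{\partial I}(\mu)$ and $\abs{\partial I}(\mu) \le \abs{dI}(\mu)$ separately; the first direction is essentially soft (it holds without $\lambda$-convexity), while the second is where geodesic $\lambda$-convexity enters and is the main work. Throughout I fix $\mu \in D(I)$ and write $\xi = I(\mu)$, so that $(\mu,\xi) \in \epi(I)$ with $\cG_I(\mu,\xi)=\xi$; by~\thref{def:wms} the claim reduces to relating $\abs{d\cG_I}(\mu,\xi)$ to $\abs{\partial I}(\mu)$ via the monotone bijection $s \mapsto s/\sqrt{1+s^2}$ on $[0,\infty]$.

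\emph{Step 1 ($\abs{dI}(\mu) \ge \abs{\partial I}(\mu)$).} Suppose $\abs{\partial I}(\mu) = \ell < \infty$ (if $\ell=0$ there is nothing to prove, and the case $\ell=\infty$ follows by letting $\delta \to \infty$ in what follows). Fix $\delta < \ell$. By definition of the metric slope as a $\limsup$, there is a neighbourhood $U$ of $\mu$ and a sequence — or rather, for every point $\nu$ near $\mu$ with $I(\nu)<I(\mu)$ a direction — along which $I$ decreases with slope at least $\delta$; but to build a genuine continuous \emph{regularity map} $\Psi$ as in~\thref{prop:prop2.5} one moves along minimising geodesics from $u$ towards $\mu$. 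Concretely, for $u \in U \cap D(I)_b$ (with $b > I(\mu)$ chosen later) let $\mu_u(\cdot)$ be a unit speed minimising geodesic from $u$ to $\mu$ (\thref{prop:geodesic}), and set $\Psi(u,t) = \mu_u(t/\kappa)$ for a suitable normalisation $\kappa = \kappa(u)$ comparable to $W_2(u,\mu)$, extended by $\Psi(u,t)=\mu$ once $t \ge \kappa$. Then $d(\Psi(u,t),u) \le t$ is immediate. For the energy-decrease estimate $I(u) - I(\Psi(u,t)) \ge \delta t$ one uses $\lambda$-convexity along $\mu_u$: for $u$ close to $\mu$ one has $W_2^2(u,\mu)$ small, so the $\lambda$-term $\tfrac{\lambda}{2} s(1-s) W_2^2(u,\mu)$ is negligible, and the convexity inequality forces $I$ to decrease at rate $\approx (I(u)-I(\mu))/W_2(u,\mu)$, which — using the characterisation of $\abs{\partial I}$ for $\lambda$-convex functionals, i.e. that $\abs{\partial I}(\mu) = \sup_{\nu \ne \mu}\big( (I(\mu)-I(\nu))_+/W_2(\mu,\nu) + \tfrac{\lambda}{2}W_2(\mu,\nu)\big)$, cf.~\cite[Ch. 1]{ambrosio2008gradient} — can be arranged to be at least $\delta$ on a sufficiently small neighbourhood. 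Continuity of $\Psi$ follows from continuous dependence of minimising geodesics and continuity of $W_2$. Then~\thref{prop:prop2.5} gives $\abs{dI}(\mu) \ge \delta$, and letting $\delta \uparrow \ell$ finishes this direction.

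\emph{Step 2 ($\abs{dI}(\mu) \le \abs{\partial I}(\mu)$).} Here I argue on the epigraph side: I claim $\abs{d\cG_I}(\mu,\xi) \le \ell/\sqrt{1+\ell^2}$ where $\ell = \abs{\partial I}(\mu)$. Suppose $\cG_I$ is $\delta$-regular at $(\mu,\xi)$ with regularity map $\psi$ on $U \times [0,\alpha]$, $U \subset \epi(I)$. Evaluating at the base point itself, $\psi((\mu,\xi),t) = (\mu_t, \eta_t) \in \epi(I)$ with $d_{\epi}((\mu_t,\eta_t),(\mu,\xi)) \le t$ and $\xi - \eta_t \ge \delta t$. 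From $I(\mu_t) \le \eta_t \le \xi - \delta t$ and $W_2(\mu_t,\mu)^2 + (\eta_t-\xi)^2 \le t^2$ we get $W_2(\mu_t,\mu) \le t\sqrt{1-\delta^2}$ and $I(\mu) - I(\mu_t) \ge \delta t$, whence
\[
\frac{(I(\mu)-I(\mu_t))_+}{W_2(\mu,\mu_t)} \ge \frac{\delta t}{t\sqrt{1-\delta^2}} = \frac{\delta}{\sqrt{1-\delta^2}}.
\]
Since $W_2(\mu_t,\mu) \to 0$ as $t \to 0$ (and $\mu_t \ne \mu$ for small $t>0$, else $\eta_t < \xi = I(\mu)$ contradicts $(\mu_t,\eta_t)\in\epi(I)$), taking $\limsup_{t\to 0}$ gives $\abs{\partial I}(\mu) \ge \delta/\sqrt{1-\delta^2}$, i.e. $\delta \le \ell/\sqrt{1+\ell^2}$. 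Taking the supremum over admissible $\delta$ yields $\abs{d\cG_I}(\mu,\xi) \le \ell/\sqrt{1+\ell^2}$, and by~\thref{def:wms} this is exactly $\abs{dI}(\mu) \le \ell = \abs{\partial I}(\mu)$. Combining with Step 1 gives equality. (Note Step 2 does not use $\lambda$-convexity; it is only needed so that, via~\thref{lem:slopeaway}, the weak slope is controlled by its value at $(\mu,I(\mu))$ rather than at higher points of the epigraph, and in Step 1 to construct the descent map.)

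\emph{Main obstacle.} The delicate point is Step 1: producing a map $\Psi$ that is \emph{jointly continuous} in $(u,t)$ and achieves the decrease rate $\delta$ uniformly on a neighbourhood, rather than just along a single near-optimal direction. This requires (i) a measurable/continuous selection of minimising geodesics $\mu_u$ emanating from nearby $u$ — available here because $\cP_2(M)$ with $M$ a complete connected smooth manifold has the structure from~\thref{thm:monge}, so geodesics are given by $t \mapsto (\exp(-t\nabla\phi_u))_\#\, u$ and depend continuously on $u$ through stability of Kantorovich potentials — and (ii) the observation that for $\lambda$-convex $I$ the corrected slope $\abs{\partial I}(\mu) + $ error is lower semicontinuous in the base point, so the rate $\delta$ survives shrinking the neighbourhood. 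Handling the sign of $\lambda$ (the $-\tfrac{\lambda}{2}s(1-s)W_2^2$ term can help or hurt by an amount $O(W_2^2)$, hence negligible at first order) and the reparametrisation so that $d(\Psi(u,t),u)\le t$ holds exactly are the routine but fiddly parts.
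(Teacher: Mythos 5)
Your Step 2 is correct and is in fact a cleaner route than the paper's for that direction: evaluating the $\delta$-regularity map of $\cG_I$ at the base point $(\mu,I(\mu))$ and splitting the graph distance into its two components gives $W_2(\mu_t,\mu)\le t\sqrt{1-\delta^2}$ and $I(\mu)-I(\mu_t)\ge\delta t$ directly, hence $\abs{\partial I}(\mu)\ge\delta/\sqrt{1-\delta^2}$, which after the change of variables in~\thref{def:wms} is exactly $\abs{dI}(\mu)\le\abs{\partial I}(\mu)$. The paper instead passes through a comparison of $\abs{\partial\cG_I}$ with $\abs{d\cG_I}$ and then computes $\abs{\partial\cG_I}(\mu,I(\mu))$ along a sequence $(\nu_n,I(\nu_n))$; your version is shorter and avoids the case distinctions there.

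Step 1 (the direction that actually uses $\lambda$-convexity) has a genuine gap: your descent map points at the wrong target. You set $\Psi(u,t)=\mu_u(t/\kappa)$ with $\mu_u$ the geodesic from $u$ \emph{to the base point} $\mu$, and claim a decrease rate $\approx (I(u)-I(\mu))/W_2(u,\mu)$. But $\abs{\partial I}(\mu)>0$ means precisely that every neighbourhood of $\mu$ contains points $u$ with $I(u)<I(\mu)$; for those $u$ there is no mechanism forcing $I$ to decrease along the geodesic toward $\mu$ ($\lambda$-convexity only gives an upper bound interpolating between $I(u)$ and the \emph{larger} value $I(\mu)$), so the uniform estimate $I(u)-I(\Psi(u,t))\ge\delta t$ fails and shrinking the neighbourhood does not help. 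Already $I(x)=x$ on $\R$ with $\mu=0$ defeats the construction: for $u<0$ the energy strictly increases toward $0$. The correct construction, which is what the paper does, fixes a \emph{single} target $\nu$ with $I(\nu)<I(\mu)-\eps W_2(\mu,\nu)$ (such $\nu$ exists by~\thref{def:ms} whenever $\abs{\partial I}(\mu)>\eps$) and sends every nearby $\eta$ along a unit speed geodesic \emph{toward $\nu$}, i.e.\ $\Psi(\eta,t)=\gamma_{\eta,\nu}\bigl(t/W_2(\eta,\nu)\bigr)$; then $\lambda$-convexity along $\gamma_{\eta,\nu}$, together with lower semicontinuity of $I$ at $\mu$ to bound $I(\mu)-I(\eta)$ from above on a small ball, yields the uniform rate $\eps-o(1)$ and \thref{prop:prop2.5} applies. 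The formula $\abs{\partial I}(\mu)=\sup_{\nu\ne\mu}\bigl((I(\mu)-I(\nu))_+/W_2(\mu,\nu)+\tfrac{\lambda}{2}W_2(\mu,\nu)\bigr)$ you quote is the right heuristic --- it singles out a distinguished low-energy $\nu$ --- but your map does not implement it.
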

 \begin{proof}
 We first show that $|dI|(\mu)\leq |\partial I|(\mu)$.  Let $\cG_I$ be the continuous extension to the epigraph and let $\Psi$ be a $\delta$-regularity mapping for the point $(\mu, I(\mu))$ with $\mu \in D(I)$, that is by~\thref{def:rp}
\begin{align}
\frac{I(\mu) - \Psi((\mu,I(\mu)),t)}{d_{\epi}((\mu,I(\mu)),\Psi((\mu,I(\mu)),t))} \geq \delta \,.
\end{align}
At the same time, we can choose $\Psi((\mu,I(\mu)),t)$ as the approximating sequence in~\thref{def:ms} of the strong metric slope and obtain 
\begin{align}
\abs{\partial \cG_I}(\mu,I(\mu)) \geq \delta \, .
\end{align}
Taking the supremum over all such $\delta$, we have the bound $\abs{\partial \cG_I}(\mu,I(\mu)) \geq \abs{d \cG_I}(\mu,I(\mu)) $. This yields only the comparison of the two different slopes of the epigraph extension~$\cG_I$. To obtain the comparison of the slopes of the functional $I$ itself, we first assume that $\mu \in D(I)$ is not a local minimum, and $\abs{\partial I}(\mu)< + \infty$. Then there exists a sequence $(\nu_n, I(\nu_n)) \in \epi(I)$ such that it converges to $(\mu,I(\mu))$ and such that $I(\nu_n) \leq I(\mu)$ for all $n$ sufficiently large. Using this as the approximating sequence in~\thref{def:ms}, we obtain 
\begin{align}
\abs{\partial \cG_I}(\mu,I(\mu))  &= \limsup_{(\nu_n, I(\nu_n)) \to (\mu,I(\mu))}\frac{(I(\mu)-I(\nu_n))_+}{\sqrt{\abs{I(\mu)-I(\nu_n)}^2 + W_2^2(\mu,\nu_n)}} \\
&=\limsup_{(\nu_n, I(\nu_n)) \to (\mu,I(\mu))}\frac{(I(\mu)-I(\nu_n))_+}{\sqrt{\bra{I(\mu)-I(\nu_n)}_+^2 + W_2^2(\mu,\nu_n)}} \\
&= \frac{\abs{\partial I}(\mu)}{\sqrt{1 + \abs{\partial I}(\mu)^2}}
\end{align}
When $\mu \notin D(I)$ or $\mu$ is local minimum both $\abs{\partial I}(\mu)$ and $\abs{\partial \cG_I}(\mu)$ are $+\infty$ and $0$ respectively. Using~\thref{def:wms} of the weak metric slope, we have $|dI|(\mu)\leq |\partial I|(\mu)$.

To prove the other inequality, we first assume that $|\partial I|(\mu) =: \eps_0 > 0$. Then, for any $\eps \in (0,\eps_0)$ exists $\delta=\delta(\eps)>0$ by~\thref{def:ms} such that there exists $\nu \in B_{\delta}(\mu)$ with
\[
I(\nu) < I(\mu) - \eps W_2(\mu,\nu) \,.
\]
Choose such a $\nu$  and set $\delta'= W_2(\mu,\nu) < \delta$. Since $I$ is l.s.c, we find for any $n\in \N, n\geq 2$ some $\alpha=\alpha(\delta',n)>0$ such that for all $\eta \in B_\alpha(\mu)$ it holds that
\[
I(\mu)-I(\eta) \leq \frac{\delta'}{n} \, .
\]
We define $\alpha'=  \alpha'(\delta',n) = \min \set{\alpha, \delta'/n}$ and define a map $\Psi: B_{\alpha'}(\mu) \times 
[0,\alpha'] \to \cP_2(M)$ as follows
\[
\Psi(\eta,t)=  \gamma_{\eta,\nu}\bra*{\frac{t}{W_2(\nu,\eta)}} \,,
\]
where $\gamma_{\eta,\nu}\bra{\cdot}$ is any unit speed minimising geodesic between $\eta$ and $\nu$ (cf. ~\thref{prop:geodesic}). Again, we have to check that $0\leq t/W_2(\nu,\eta)\leq 1$. We have from the definition of $\alpha'$ by the triangle inequality
\begin{align}
\frac{n-1}{n} \delta' \leq - W_2(\mu,\eta)+ W_2( \mu,\nu) \leq W_2(\nu,\eta) &\leq W_2(\mu,\eta)+ W_2( \mu,\nu) \leq \frac{n+1}{n} \delta' \,.
\end{align}
Thus, it follows that $0 \leq t/W_2(\eta,\nu) \leq 1$. Also, by construction holds $W_2(\eta,\Psi(\eta,t))=t$. Now, by the $\lambda$-geodesically convexity of $I$, we obtain the following estimate
\begin{align}
I(\Psi(\eta,t)) &\leq I(\eta) +\frac{t}{W_2(\eta,\nu)} \bra[\big]{I(\nu)-I(\eta)} + \frac{|\lambda|}{2}t\bra*{1-\frac{t}{W_2(\eta,\nu)}}W_2(\eta,\nu) \\
&\leq  I(\eta) +\frac{t}{W_2(\eta,\nu)} \bra[\big]{I(\nu)-I(\mu)} + \frac{t}{W_2(\eta,\nu)} \bra[\big]{I(\mu)-I(\eta)} + \frac{|\lambda|}{2} \, t\, \delta' \frac{n+1}{n}   \\
& < I(\eta) + t \bra*{ -\eps\bra*{ \frac{n}{n+1} } + \frac{1}{n-1} + \delta'|\lambda| } \, .
\end{align}
We can pick $\delta'>0$ to be as small and $n$ as large as we want and conclude $I(\Psi(\eta,t)) \leq I(\eta) - \eps t$. It follows from~\thref{prop:prop2.5} that $|dI|(\mu)\geq \eps$. Thus, since $\eps\in (0,\eps_0)$ is arbitrary, we have that $|dI|(\mu) \geq \eps_0 =|\partial I|(\mu)$ for all positive values. For the case in which $\abs{ d I}(\mu)=0$, assume that $\abs{\partial I}(\mu)= \eps_0>0$. But we have shown that $\abs{\partial I}(u)=\abs{\partial I}(\mu)= \eps_0>0$ which would be a contradiction. Thus, we have $\abs{\partial I}(\mu)=\abs{\partial I}(\mu)$. 
 \end{proof}
\begin{prop}\thlabel{prop:connected}
Let $I : \cP_2(M) \to \R \cup \set{+\infty}$ be a proper, l.s.c, and $\lambda$-geodesically convex functional. Then $\epi (I)$ 
is complete and path-connected.
 \end{prop}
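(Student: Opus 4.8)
The plan is to handle completeness and path-connectedness separately; each reduces to a short argument.

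\emph{Completeness.} Since $\cP_2(M)$ is complete (recalled at the start of Section~\ref{s:mpw}) and $\R$ is complete, the product $\cP_2(M)\times\R$ equipped with the graph metric $d_{\epi}$ is complete. As noted after \thref{def:epigraph}, the epigraph of an l.s.c.\ function is closed, so $\epi(I)$ is a closed subset of a complete metric space and is therefore complete. No convexity is needed for this part.

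\emph{Path-connectedness.} Fix $(u_0,\xi_0),(u_1,\xi_1)\in\epi(I)$; then $u_0,u_1\in D(I)$ because $I(u_i)\le\xi_i<\infty$. First I would use \thref{prop:geodesic} to select a unit-speed minimising geodesic $\mu\in C([0,1];\cP_2(M))$ with $\mu(0)=u_0$ and $\mu(1)=u_1$. The crucial point is that $\lambda$-geodesic convexity bounds $I$ uniformly along this geodesic: for every $t\in[0,1]$,
\[
I(\mu(t))\le (1-t)I(u_0)+tI(u_1)-\frac{\lambda}{2}t(1-t)W_2^2(u_0,u_1)\le C,
\]
where $C:=\max\set{I(u_0),I(u_1)}+\frac{|\lambda|}{8}W_2^2(u_0,u_1)<\infty$ (using $t(1-t)\le 1/4$). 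Setting $M:=\max\set{\xi_0,\xi_1,C}$, I would then concatenate three continuous curves lying in $\epi(I)$: (i) the vertical segment $t\mapsto(u_0,(1-t)\xi_0+tM)$, which remains in $\epi(I)$ since its heights lie in $[\xi_0,M]$ and $\xi_0\ge I(u_0)$; (ii) the horizontal curve $t\mapsto(\mu(t),M)$, which remains in $\epi(I)$ since $M\ge C\ge I(\mu(t))$; and (iii) the vertical segment $t\mapsto(u_1,(1-t)M+t\xi_1)$, which remains in $\epi(I)$ for the same reason as (i). Reparametrising these onto $[0,\frac13]$, $[\frac13,\frac23]$, $[\frac23,1]$ and checking that consecutive endpoints agree (they are $(u_0,M)$ and $(u_1,M)$) produces, by the pasting lemma, a continuous path in $\epi(I)$ from $(u_0,\xi_0)$ to $(u_1,\xi_1)$.

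The only genuine subtlety — and the place where $\lambda$-convexity is used — is the uniform bound: when $\lambda<0$ the convexity inequality does not keep $I(\mu(t))$ below $\max\set{I(u_0),I(u_1)}$, but the extra term $-\frac{\lambda}{2}t(1-t)W_2^2(u_0,u_1)$ is still dominated by $\frac{|\lambda|}{8}W_2^2(u_0,u_1)$, so a single ceiling height $M$ covers the whole geodesic. Everything else is a routine verification that the three pieces are continuous and stay in the epigraph.
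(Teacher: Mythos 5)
Your proof is correct and follows essentially the same approach as the paper: completeness via closedness of the epigraph of an l.s.c.\ functional inside the complete product $\cP_2(M)\times\R$, and path-connectedness via a unit-speed minimising geodesic together with the $\lambda$-convexity bound on $I$ along it. The only cosmetic difference is that the paper takes the single curve $t\mapsto\bigl(\mu(t),(1-t)\xi_0+t\xi_1-\tfrac{\lambda}{2}t(1-t)W_2^2(u_0,u_1)\bigr)$ in place of your three-piece up--across--down concatenation (and you should rename your ceiling constant, since $M$ already denotes the manifold); both constructions rest on the same convexity estimate.
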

 \begin{proof}
  Since $\cP_2(M) \times \R$ is complete, we have for any convergent sequence $(\mu_n,\xi_n) \in \epi(I)$ converging to some $(\mu,c) \in \cP_2(M) \times \R$, that $I(\mu) \leq \lim \inf I(\mu_n)\leq \lim \inf\xi_n=c$. Thus, $\epi(I) \subset \cP_2(M) \times \R$ is closed and thus complete.

Let $(\mu_0,\alpha), (\mu_1,\beta) \in \epi( I)$. Then $(\mu(\cdot),(1-t)\alpha + t\beta - \frac{\lambda}{2}t(1-t)W_2^2(\mu_0,\mu_1))$, where $\mu \in C([0,1];\cP_2(M))$ is any unit speed minimising geodesic between $\mu_0$ and $\mu_1$ (cf.~\thref{prop:geodesic}), is a continuous path  (with respect to $d_{\epi}$) between them which lies entirely in $\epi( I)$.  
 \end{proof}
We conclude this section with the proof of~\thref{thm:mpwi}.
 \begin{proof}[Proof of~\thref{thm:mpwi}]
 Denote by $\Gamma_{\epi}$ the set of all continuous curves $\gamma_{\epi}:[0,1] \to \epi(I)$ with $\gamma_{\epi}(0)=(\mu,I(\mu))$ and $\gamma_{\epi}(1)=(\nu,I(\nu))$. We can identify any $\gamma_{\epi} \in \Gamma_{\epi}$ with a $\tilde{\gamma} \in \Gamma$ by projecting onto the first factor, i.e.  $(t \mapsto (\mu(t),\xi(t))) \mapsto (t \mapsto \mu(t))$. Because of the definition of the epigraph and $\cG_I$, we have that
\begin{align}\label{eq:cubound}
\inf_{\gamma_{\epi} \in \Gamma_{\epi}} \max_{t \in[0,1]} \cG_I(\gamma_{\epi}(t)) &\geq \inf_{\gamma_{\epi} \in \Gamma_{\epi}} \sup_{t \in[0,1]} I(\tilde{\gamma}(t)) \geq \inf_{\gamma \in \Gamma}  \sup_{t \in[0,1]} I(\gamma(t)) =c \,.
\end{align}
Now, we prove the inequality holds the other way as well. Note that, for every $\eps>0$, there must exist some $\gamma \in \Gamma$ such that 
\begin{align}
c \leq \sup_{t \in [0,1]} I(\gamma(t)) \leq c+\frac{\eps}{2} \, .
\end{align}
Consider a partition $\mathfrak{P}_\delta=\set{t_i}_{i=0 ,\dots, N}$ of $[0,1]$ having mesh size $\delta>0$. Consider the family of curves $\set{\gamma^{\delta,i}}_{i=0,\dots,N-1} \subset C([0,1];\cP_2(M))$  associated to the partition $\mathfrak{P}_\delta$, where $\gamma^{\delta,i}(\tau)= \mu^i(\tau)$, $\tau \in [0,1]$ and $\mu^i$ is a unit speed minimising geodesic between $\gamma(t_i)$ and $\gamma(t_{i+1})$. We can now construct a family of curves $\set{(\gamma^{\delta,i},\xi^{\delta,i})}_{i=0,\dots,N-1}\subset C([0,1];\epi(I))$ such that 
\[
\xi^{\delta,i}(\tau):=(1-\tau)I(\gamma(t_i)) + \tau(I(\gamma(t_{i+1}))) + \frac{\abs{\lambda}}{2}\tau(1-\tau)W_2^2(\gamma(t_i),\gamma(t_{i+1}))  \, .
\]   
The fact that $I$ is $\lambda$-geodesically convex ensures that $(\gamma^{\delta,i}(\tau),\xi^{\delta,i}(\tau)) \in \epi(I)$ for all $\tau \in [0,1]$ and $i= 0,\dots,N-1$. We can now concatenate these curves as follows
\begin{align}
\gamma_{\epi}^\delta(t)=(\gamma^\delta(t),\xi^\delta(t))=(\gamma^{\delta,i}(Nt-i),\xi^{\delta,i}(Nt-i)) \quad i/N \leq t < (i+1)/N
\, ,
\end{align} 
such that the curve $\gamma^\delta_{\epi} \in \Gamma_{\epi}$. It follows then that
\begin{align}
\max_{t \in [0,1]}\cG_I(\gamma^\delta_{\epi}) &\leq \sup_{t\in [0,1] }I(\gamma(t)) + \max_{i=0,\dots,N-1}\frac{\abs{\lambda}}{2}W_2^2(\gamma\bra{t_i},\gamma\bra{t_{i+1}}) \\
& \leq c+ \frac{\eps}{2} + \max_{i=0,\dots,N-1}\frac{\abs{\lambda}}{2}W_2^2(\gamma\bra{t_i},\gamma\bra{t_{i+1}}) \, .
\end{align}
Note that the curve $\gamma \in C([0,1];\cP_2(M))$ is uniformly continuous, since $[0,1]$ is compact. Thus, we can find a $\delta>0$ small enough such that
\begin{align}
\frac{\abs{\lambda}}{2}W_2^2(\gamma\bra{s},\gamma\bra{t}) \leq \frac{\eps}{2} \, ,
\end{align}
for all $\abs{s-t} \leq \delta$ and $ s,t \in [0,1]$. Thus, we have that
\begin{align}
\max_{t \in [0,1]}\cG_I(\gamma^\delta_{\epi}) \leq c+ \eps \, .
\end{align}
Using the inequality in~\eqref{eq:cubound}, it follows that
\begin{align}
c \leq \inf_{\gamma_{\epi} \in \Gamma_{\epi}} \max_{t \in[0,1]} \cG_I(\gamma_{\epi}(t)) \leq c+ \eps \, .
\end{align}
Since $\eps>0$ is arbitrary, it follows that
\begin{align}
\inf_{\gamma_{\epi} \in \Gamma_{\epi}} \max_{t \in[0,1]} \cG_I(\gamma_{\epi}(t))= c= \inf_{\gamma \in \Gamma} \sup_{t \in[0,1]} I(\gamma(t)) \, .
\end{align}
Also we have that $c_1=\max\set{I(\mu),I(\nu)}=\max \set{\cG_I(\mu,I(\mu)), \cG_I(\nu,I(\nu))}$ and from the above identity
that $\inf_{\gamma_{\epi} \in \Gamma_{\epi}} \max_{t \in[0,1]} \cG_I(\gamma_{\epi}(t)) = c >c_1$. Furthermore, if $I$ satisfies $\mathrm{(\hyperlink{MPS}{MPS})}$, it follows that $\cG_I$ satisfies it as well. Let $(\mu_n, \xi_n)$ be a Palais sequence.
Since $\abs{d \cG_I}(\mu_n,\xi_n) \to 0$ it follows from~\thref{lem:slopeaway} that for $n$ large enough the sequence must be of the form $(\mu_n,I(\mu_n))$ and that $\abs{d I}(\mu_n) \to 0$.  Since $\cG_I((\mu_n,I(\mu_n)))=I(\mu_n) \to c$, it follows that $\mu_n$ is a Palais sequence for $I$. Thus, we can construct a subsequence which converges to some $\mu^* \in \cP_2(M)$ and by extension
to $(\mu^*, c) \in \epi(I)$.
Finally we can apply~\thref{thm:mpc} to $\cG_I$ to extract the existence of a critical point $(\eta,c) \in \epi(I)$ such that $|d \cG_I|(\eta,c)=0$ and $\cG_I((\eta,c))=c= \inf_{\gamma_{\epi} \in \Gamma_{\epi}} \Upsilon(\gamma) $. However, the contraposition of \thref{lem:slopeaway} implies that $c=I(\eta)$ if $|d \cG_I|(\eta,c)<1$, from which it follows that $\abs{d I}(\eta)=\abs{d \cG_I}(\eta,I(\eta))=0$. Thus, $\eta$ is critical point of $I$ with critical value $c$. Also, since~$I$ is $\lambda$-geodesically convex it follows from~\thref{prop:equivslope} that $\abs{\partial I}(\eta)=0$.
 \end{proof}
\begin{rem}
We remark that a similar regularisation argument to the one used in the above proof, i.e. using the curves $\gamma^\delta$,  can also be used to prove that
\begin{align}
\inf_{\gamma \in \Gamma} \sup_{t \in [0,1]} I(\gamma(t)) = \inf_{\gamma \in \Gamma_{AC}} \max_{t \in [0,1]} I(\gamma(t)) \, ,
\end{align}
where $\Gamma_{AC}= \Gamma \cap AC([0,1]; \cP_2(M))$.
\end{rem}
\begin{rem}\thlabel{rem:lspace2}
Since all we have used in~\thref{def:lconvex},~\thref{lem:slopeaway},~\thref{prop:equivslope}, and~\thref{prop:connected}, is the existence of a unit speed minimising geodesic between two points~$\mu$ and~$\nu$, it follows that these results hold true if $M$ is replaced by $\cM$, a complete, separable, and locally compact length space. Furthermore, the abstract mountain pass theorem, i.e.~\thref{thm:mpwi}, continues to hold true if $\cP_2(M)$ is replaced by $\cP_2(\cM)$.
\end{rem}
\section{Application to the McKean--Vlasov model}\label{s:mva} 
This section is devoted to the analysis of the McKean--Vlasov free energy~$I$~\eqref{eq:I}.  As an immediate consequence of~\thref{thm:mpwi}, we obtain~\thref{intro:mpf}.
\begin{proof}[Proof of Theorem~\ref{intro:mpf}]
The functional $I:\cP(\T)\to \R$ is proper and l.s.c and since $\norm{D^2 W}_{\Leb^\infty(\T)} \leq C$ it is also $\lambda$-geodesically convex.  The space $\cP(\T)$ is compact and thus $I$ trivially satisfies~$\mathrm{(\hyperlink{MPS}{MPS})}$. Since $\mu_0$ is a strict local minimum of $I$, it follows that there exists an $R>0$, such that, for all $0<r<R$, $\mu_0$ is the unique minimiser of $I$ in $B^{W_2}_r(\mu_0)$. Thus, we have that $I(\mu)> I(\mu_0)$ for all $\mu \in \partial B^{W_2}_r(\mu_0)$, $0<r<R$. Since $\partial B^{W_2}_r(\mu)$ is compact (because $\cP(\T)$ equipped with the $W_2$ metric is compact) and $I$ is l.s.c, it follows that the minimum of $I(\mu)-I(\mu_0)$ must be attained by some $\mu_2^r \in \partial B^{W_2}_r(\mu_0)$. Thus, we have that
\begin{align}
I(\mu)-I(\mu_0) \geq I(\mu_2^r)-I(\mu_0) =: \delta(r)>0 \, , 
\end{align} 
for all $\mu \in \partial B^{W_2}_r(\mu_0)$ and all $0<r<R$. Let us set $r < \min\set{R, W_2(\mu_0,\mu_1)}$. Since any curve $\gamma \in \Gamma$ must pass through $\partial B^{W_2}_r(\mu_0)$, it follows that
\begin{equation*}
\inf_{\gamma \in \Gamma} \sup_{t \in [0,1]} I(\gamma(t)) \geq I(\mu_0) + \delta(r)= \max\set{I(\mu_0),I(\mu_1)} + \delta(r) \, . \qedhere
\end{equation*}
\end{proof}
At this level of generality, one still needs to find at least one strict local minimum to apply~\thref{intro:mpf}. Hence, as a next step, we would like to provide conditions on the interaction potential $W$ and the parameter values $\beta$ at which we can find two measures $\mu_0$ and $\mu_1$ which satisfy the assumptions of~\thref{intro:mpf}. We do this by using the results of~\cite{CGPS18} to argue that one can find potentials $W$ and parameter values $\beta_c$ such that the free energy $I$ has two distinct minimisers and one of them, $\mu^L$, is a strict local minimum of $I$. Such parameter values are referred to as discontinuous transition points of $I$ (cf.~\thref{defn:tp}).  In the second part, we formulate the large deviations results and complete the proof of~\thref{thm:ldi}. Let us recall some of the main definitions and results about the free energy functional from~\cite{chayes2010mckean} and~\cite{CGPS18}. 
\begin{defn}[Transition point] \thlabel{defn:tp}
A parameter value $\beta_c >0$ is said to be a \emph{transition point} of~$I$ from the uniform measure $\mu^L(\dx{x})=\dx{x}/L^d$ if it satisfies the following conditions:
\begin{enumerate}
\item For $0<\beta< \beta_c$, $\mu^L$ is the unique minimiser of $I$\,.
\item For $\beta=\beta_c$, $\mu^L$ is a minimiser of $I$\,.
\item For $\beta>\beta_c$, there exists $\mu_\beta \in \cP(\T)\setminus \set{\mu^L}$, such that $\mu_\beta$ is a minimiser of $I$\,.
\end{enumerate}
Additionally, a transition point  $\beta_c >0$ is said to be a \emph{continuous transition point} of $I$ if:
\begin{enumerate}
\item For $\beta=\beta_c$, $\mu^L$ is the unique minimiser of $I$\,.
\item Given any family of minimisers $\{\mu_\beta|\beta> \beta_c \}$, we have that
\begin{align}
\limsup_{\beta \downarrow \beta_c} \norm*{\mu_\beta-\mu^L}_{TV}=0 \, .
\end{align}
\end{enumerate}
A transition point $\beta_c$ which is not continuous is said to be \emph{discontinuous}.  
\end{defn}
In thermodynamics, continuous phase transitions correspond to second-order ones similar to those seen in the theory of magnetisation and spin systems~\cite{Dawson1983,Shiino1987,GomesPavliotis2018}, whereas discontinuous phase transitions correspond to first-order ones similar to the ones observed in nucleation processes or phase transformation from liquid to vapour~\cite{LP66}.

One can show that if $\beta_c$ is discontinuous, i.e. if either one of the  two conditions in~\thref{defn:tp} are violated, then it must be the case that condition (1) is violated (cf.~\thref{thm:summary}~\ref{p21}). This is the key idea we will use to obtain a set of conditions under which we can apply the result of~\thref{intro:mpf}. The original statement of these definitions and the proof of the above statement can be found in~\cite{chayes2010mckean}. We summarise the main results about the free energy functional in the theorem below. The proofs can be found in~\cite{chayes2010mckean} and~\cite[Theorems 5.11 and 5.19]{CGPS18}. The conditions are expressed in terms of the Fourier coefficients of $W$ denoted by]
\begin{equation}\label{e:FT:W}
\widehat{W}(k)=\intT{e_k(x) W(x)} \quad\text{ with }\quad e_k=L^{-d/2}\exp\bra*{\frac{2\pi \, i}{L} \; k \cdot x } \quad\text{ for }\quad k \in \Z^d \,.
\end{equation}
\begin{thm}\thlabel{thm:summary}
Assume that $W \in C^2(\T)$ and $\beta>0$.
\begin{tenumerate}
\item The free energy function $I : \cP(\T) \to \R \cup \set{+\infty}$ always has a minimiser $\mu \in \cP(\T)$ such that $\mu \ll \dx{x}$ with a positive and smooth density. 
\label{p1}
\item If there exists $k \in \Z^d\setminus\set{0}$ such that $\widehat{W}(k) <0$, then there exists a $\beta_c>0$ such that $\beta_c$ is a transition point of $I$. Furthermore, if the transition point $\beta_c>0$ is discontinuous, then there exist at least two minimisers of the free energy $I$ over $\cP(\T)$ at $\beta=\beta_c$, such that one is $\mu^L=L^{-d} \dx{x}$ and the other is some $\bar{\mu} \in \cP(\T)$. Additionally, if $\beta_c$ is discontinuous then $\beta_c <\frac{ L^{d/2} }{\abs*{\min_{k \in \Z^d, k\neq 0}\widehat{W}(k)}}$. \label{p21}
\item  \label{p2}
For $W$ with $\beta_c>0$ a transition point, let the set $K^\delta$ be given for any $\delta>0$ by
\begin{align}
K^\delta:=\set*{k \in \Z^d, k \neq 0: \widehat{W}(k) \leq \min_{k \in \Z^d, k\neq 0}\widehat{W}(k) +\delta} \, ,
\end{align}
Let $\delta_*>0$ be the smallest $\delta$ for which there
exist distinct $k^a,k^b,k^c \in K^{\delta_*}$ with $k^a=k^b+k^c$, if such points exist, else $\delta_*=\infty$. If $\delta_*$ is sufficiently small, then $\beta_c$ is a discontinuous transition point.
\item \label{p3}
Let $\set{W_n}_{n \in \N} \in C^2(\T)$, with $\beta_{c,n}>0$ the associated transition points, be a sequence of interaction potentials such that $\delta_* \to 0$ as $n \to \infty$. Assume there exists $N \in \N$ and a positive constant $C>0$ such that for all $n>N$, $\abs[\big]{\min_{k \in \Z^d, k\neq 0}\widehat{W}_n(k)} > C \delta_*^\gamma$ for any $\gamma< \frac{1}{2}$. Then 
for $n$ sufficiently large, $\beta_{c,n}$ is a discontinuous transition point and $\beta_{c,n}
<\frac{ L^{d/2} }{\abs*{\min_{k \in \Z^d, k\neq 0}\widehat{W}_n(k)}}$. 
\end{tenumerate}
\end{thm}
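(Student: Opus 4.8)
The four assertions are of rather different natures; I would treat them in turn, the later ones resting on a Fourier expansion of $I$ about the uniform state $\mu^L$. \textbf{Part~\ref{p1}.} The plan is the direct method: $\cP(\T)$ is compact in the topology of weak convergence of measures, the entropy term in~\eqref{eq:I} is weakly l.s.c., and, since $W\in C(\T)$, the interaction term is weakly continuous, so a minimiser $\mu$ exists. For the regularity one writes down the first-order optimality condition, which forces $\beta^{-1}\log\frac{\dx{\mu}}{\dx{x}}+W\star\mu$ to be constant on $\supp\mu$, i.e.\ $\frac{\dx{\mu}}{\dx{x}}=Z^{-1}\exp(-\beta\,W\star\mu)$ there; since a minimiser cannot have its density vanish on a set of positive measure (moving mass onto such a set strictly decreases the entropy, hence $I$), one gets $\supp\mu=\T$ and the identity holds everywhere. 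As $\partial^{\alpha}(W\star\mu)=(\partial^{\alpha}W)\star\mu$ for $\abs{\alpha}\le 2$, the density is at least $C^2$, and then $\partial^{k+2}(W\star\mu)=(\partial^{2}W)\star(\partial^{k}\mu)$ lets one bootstrap to $C^\infty$; positivity is immediate from the exponential form.

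\textbf{Part~\ref{p21}.} The first step is to linearise $I$ at $\mu^L$: writing a competitor density as $L^{-d}\bigl(1+\sum_{k\neq0}a_k e_k\bigr)$ and expanding to second order, the quadratic form decouples over Fourier modes and equals, up to a positive prefactor, $\sum_{k\neq0}\bigl(\beta^{-1}+L^{-d/2}\widehat W(k)\bigr)\abs{a_k}^2$. Hence $\mu^L$ is a local minimiser exactly while $\beta\le\beta_\sharp:=L^{d/2}/\abs{\min_{k\neq0}\widehat W(k)}$ (finite precisely because some $\widehat W(k)<0$), and fails to be even a local minimiser once $\beta>\beta_\sharp$. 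Strict convexity of $I$ near $\mu^L$ for $\beta$ small gives uniqueness of $\mu^L$ there, so $\beta_c:=\sup\{\beta:\mu^L\text{ is the unique minimiser of }I\}$ is well defined, positive, and $\le\beta_\sharp$; one then checks it is a transition point in the sense of~\thref{defn:tp}, with $\mu^L$ still a minimiser at $\beta=\beta_c$ by weak compactness and l.s.c.\ on letting $\beta\uparrow\beta_c$. If $\beta_c$ is discontinuous then, by~\cite{chayes2010mckean}, the feature that fails at $\beta=\beta_c$ is uniqueness: $\mu^L$ is a, but not the only, minimiser, and a weak limit $\bar\mu$ of minimisers $\mu_\beta$ as $\beta\downarrow\beta_c$ is a second one, distinct from $\mu^L$ precisely because $\norm{\mu_\beta-\mu^L}_{TV}\not\to 0$ in the discontinuous case. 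The strict inequality $\beta_c<\beta_\sharp$ is obtained together with parts~\ref{p2}--\ref{p3}, where discontinuity is shown to manifest as the appearance of a competitor strictly below the linear threshold.

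\textbf{Parts~\ref{p2} and~\ref{p3}.} These are the substantive assertions, and I expect the cubic expansion to be where the real work lies. The plan is to expand $I$ at $\mu^L$ to third order along competitors $\mu^L+\eps\rho$ with $\rho=\sum_{k\in K^\delta}a_k e_k$ supported on the near-minimal modes: the interaction energy is exactly quadratic in $\rho$, so the only cubic contribution comes from the entropy and equals a negative multiple of $\int\rho^3\,\dx{x}$, which in Fourier variables is $\sum a_k a_l a_m$ over triples with $k+l+m=0$. If this cubic term vanishes on $K^\delta$ the bifurcation at $\beta_\sharp$ is supercritical and the transition continuous; the content of the assertion is the opposite case, so when $\delta_*$ is small enough that $K^{\delta_*}$ contains distinct $k^a=k^b+k^c$ with all three $\widehat W$-values within $\delta_*$ of the minimum, one chooses the phases of $a_{k^a},a_{k^b},a_{k^c}$ so that the cubic term of $I$ becomes strictly negative, and then, balancing the amplitude $\eps$ against the positive $O(\beta_\sharp-\beta)$ quadratic part, produces a density with $I<I(\mu^L)$ for some $\beta<\beta_\sharp$. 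This forces $\beta_c<\beta_\sharp$; and because the quadratic form at $\mu^L$ is then strictly positive definite at $\beta=\beta_c$ (hence just above), $\mu^L$ is a strict local minimiser there, so any competing minimiser stays uniformly away from it --- which is precisely discontinuity. Part~\ref{p3} is the uniform version along a sequence $W_n$: one tracks the requirement that the negative cubic gain ($\sim\eps^3$) beat both the quadratic correction ($\sim\delta_*\eps^2$, since near-minimal modes carry eigenvalues of size $O(\delta_*)$ once $\beta$ is near $\beta_\sharp$) and the quartic-and-higher remainder, and the exponent $\tfrac12$ in the hypothesis $\abs{\min_{k\neq0}\widehat W_n(k)}>C\delta_*^{\gamma}$ arises from this balance, the accompanying bound on $\beta_{c,n}$ being just $\beta_\sharp$ for $W_n$. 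The main obstacle throughout is bookkeeping the three competing small quantities --- the resonance gap $\delta_*$, the spectral gap $\abs{\min\widehat W_n}$, and the amplitude $\eps$ --- uniformly enough that the cubic term genuinely wins; carrying this out carefully is exactly~\cite[Theorems 5.11 and 5.19]{CGPS18}, which I would invoke for the details.
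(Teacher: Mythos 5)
The paper does not actually prove this theorem: it is stated explicitly as a summary of results whose proofs are in~\cite{chayes2010mckean} and~\cite[Theorems 5.11 and 5.19]{CGPS18}, which is precisely where your sketch also terminates. Your outline of the underlying mechanisms --- the direct method plus the Euler--Lagrange fixed-point bootstrap for part (a), the Fourier-diagonalised quadratic form with threshold $L^{d/2}/\abs{\min_{k\neq 0}\widehat W(k)}$ for part (b), and the resonant cubic term $k^a=k^b+k^c$ driving the energy below $I(\mu^L)$ before the linear threshold for parts (c)--(d) --- is a faithful account of the strategy of those cited proofs, so the proposal is consistent with the paper's treatment.
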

The above result provides conditions when to expect a discontinuous transition point. The case of the discontinuous transition point is particularly interesting for us as it implies the existence of a parameter value $\beta_c$ at which there are two 
distinct minimisers and hints at a possible scenario in which the mountain pass theorem
could be applied. To provide more intuition we show in the following lemma that any potential that under rescaling localises sufficiently fast but loses mass sufficiently
slow will eventually exhibit a discontinuous transition point for the associated free energy $I$.
\begin{lem}~\thlabel{cor:lm}
Let $W \in C^2(\T)$ be a compactly supported interaction potential with support strictly contained in $\T$ and $\intT{W}<0$. Assume further, that for some $\epsilon_1>0$ and all $\epsilon\in (0,\epsilon_1]$, it holds that
\begin{align}
L^{-d/2} \intT{ W(x) e^{i\frac{2 \pi \epsilon k \cdot x}{L}}} \geq L^{-d/2}\intT{W}:=-C \quad\text{ for all } k \in \Z^d \, .\label{eq:ftlb}
\end{align}
 Consider the rescaled potential, $W_\epsilon(x) = f(\epsilon) W(x/\epsilon)$ and positive function $f: (0,\epsilon_1] \to \R_+$.  If $\epsilon^{\ell} \lesssim f(\epsilon) \lesssim\epsilon^m$  as $\epsilon \to 0$ for $m >-d -2 $, $\ell \geq-d$, $\ell <\frac{m-d}{2} + 1$ (along with the natural restriction $\ell\geq m$), then for $\epsilon$ small enough, the associated free energy $I$ possesses a discontinuous transition point at some $\beta_c < \frac{ L^{d/2} } {\abs*{\min_{k \in \Z^d, k\neq 0}\widehat{W}_\epsilon(k)}}$.
\end{lem}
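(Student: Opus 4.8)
The plan is to realise $\set{W_\epsilon}_{\epsilon}$ as a family of potentials to which \thref{thm:summary}~\ref{p3} applies, after first checking via \thref{thm:summary}~\ref{p21} that a transition point exists at all. Since $W$ is compactly supported strictly inside $\T$ and $\epsilon<1$, the support of $W_\epsilon=f(\epsilon)W(\cdot/\epsilon)$ is $\epsilon\,\supp W\subset\T$, and the change of variables $x\mapsto\epsilon x$ gives, for every $k\in\Z^d$,
\[
\widehat{W}_\epsilon(k)=f(\epsilon)\,\epsilon^{d}\,g(\epsilon k),\qquad g(\xi):=L^{-d/2}\intT{W(x)\,e^{2\pi i\xi\cdot x/L}},\quad\xi\in\R^d.
\]
Here $g$ is smooth with all derivatives bounded (since $x^\alpha W\in L^1$), $g(0)=L^{-d/2}\intT{W}=-C<0$, $\nabla g(0)=0$ (by evenness of $W$), and hypothesis~\eqref{eq:ftlb} says precisely that $g(\epsilon k)\ge g(0)$ for all $k\in\Z^d$ and $\epsilon\in(0,\epsilon_1]$. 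First I would pin down $\bigl\lvert\min_{0\neq k\in\Z^d}\widehat{W}_\epsilon(k)\bigr\rvert$: the bound $g(\epsilon k)\ge g(0)$ gives $\min_{k\neq0}\widehat{W}_\epsilon(k)\ge f(\epsilon)\epsilon^{d}g(0)$, while evaluating at $k=e_1$ and using $g(\epsilon e_1)\to g(0)<0$ gives $\min_{k\neq0}\widehat{W}_\epsilon(k)\le f(\epsilon)\epsilon^{d}g(\epsilon e_1)\le\tfrac12 f(\epsilon)\epsilon^{d}g(0)<0$ for $\epsilon$ small. Thus $\tfrac{\abs{g(0)}}{2}f(\epsilon)\epsilon^{d}\le\bigl\lvert\min_{k\neq0}\widehat{W}_\epsilon(k)\bigr\rvert\le\abs{g(0)}f(\epsilon)\epsilon^{d}$, and in particular $\widehat{W}_\epsilon$ has a strictly negative coefficient, so \thref{thm:summary}~\ref{p21} provides a transition point $\beta_{c,\epsilon}>0$.

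Next I would bound $\delta_*$ from above. Fix once and for all three distinct nonzero lattice points with $k^a=k^b+k^c$ and Euclidean norm at most some absolute $R_0$ (e.g.\ $\set{e_1,e_2,e_1+e_2}$ when $d\ge2$, and $\set{-1,1,2}$ when $d=1$). A second-order Taylor expansion of $g$ at $0$, using $\nabla g(0)=0$ and $\sup_{\R^d}\abs{\nabla^2 g}<\infty$, yields $\bigl\lvert\widehat{W}_\epsilon(k)-f(\epsilon)\epsilon^{d}g(0)\bigr\rvert\le c_0\,f(\epsilon)\epsilon^{d+2}$ for all $\abs{k}\le R_0$ and small $\epsilon$, with $c_0$ depending only on $W$ and $R_0$. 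Together with $\min_{k'\neq0}\widehat{W}_\epsilon(k')\ge f(\epsilon)\epsilon^{d}g(0)$ this shows $k^a,k^b,k^c\in K^{c_0 f(\epsilon)\epsilon^{d+2}}$, hence $\delta_*\le c_0\,f(\epsilon)\epsilon^{d+2}$.

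Finally I would apply \thref{thm:summary}~\ref{p3} along an arbitrary sequence $\epsilon_n\downarrow0$, viewing $\set{W_{\epsilon_n}}_n$ as the sequence of potentials. Since $f(\epsilon)\lesssim\epsilon^{m}$ and $m>-d-2$, $\delta_*\lesssim\epsilon_n^{m+d+2}\to0$. For the growth condition the decisive arithmetic is that $\ell<\tfrac{m-d}{2}+1$ together with $m\le\ell$ forces $\ell<2-d$, i.e.\ $\ell+d<2$; hence $\tfrac{\ell+d}{\ell+d+2}<\tfrac12$ and one may fix $\gamma\in\bigl(\tfrac{\ell+d}{\ell+d+2},\tfrac12\bigr)$. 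Using $\bigl\lvert\min_{k\neq0}\widehat{W}_\epsilon(k)\bigr\rvert\gtrsim f(\epsilon)\epsilon^{d}$, $\delta_*^{\gamma}\lesssim\bigl(f(\epsilon)\epsilon^{d+2}\bigr)^{\gamma}$ and $f(\epsilon)\gtrsim\epsilon^{\ell}$ one gets
\[
\frac{\bigl\lvert\min_{k\neq0}\widehat{W}_\epsilon(k)\bigr\rvert}{\delta_*^{\gamma}}\gtrsim f(\epsilon)^{1-\gamma}\epsilon^{d-(d+2)\gamma}\gtrsim\epsilon^{\ell(1-\gamma)+d-(d+2)\gamma},
\]
and the exponent $\ell(1-\gamma)+d-(d+2)\gamma\le0$ holds precisely because $\gamma\ge\tfrac{\ell+d}{\ell+d+2}$, so the ratio stays bounded below by a positive constant. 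Therefore $\bigl\lvert\min_{k\neq0}\widehat{W}_{\epsilon_n}(k)\bigr\rvert\ge C\,\delta_*^{\gamma}$ for a fixed $C>0$ and all large $n$, and \thref{thm:summary}~\ref{p3} gives that $\beta_{c,\epsilon_n}$ is a discontinuous transition point with $\beta_{c,\epsilon_n}<L^{d/2}/\bigl\lvert\min_{k\neq0}\widehat{W}_{\epsilon_n}(k)\bigr\rvert$ for $n$ large, which is the claim. I expect the only real obstacle to be the bookkeeping of the exponents of $\epsilon$: one must obtain the two-sided estimate $\bigl\lvert\min_{k\neq0}\widehat{W}_\epsilon(k)\bigr\rvert\asymp f(\epsilon)\epsilon^{d}$ and the one-sided estimate $\delta_*\lesssim f(\epsilon)\epsilon^{d+2}$ with exactly these powers, and then to verify that the resulting exponent gap of $2$ is what makes an admissible $\gamma<\tfrac12$ exist under the constraint $\ell+d<2$.
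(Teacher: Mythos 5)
Your proposal is correct and follows essentially the same route as the paper's proof: the change of variables giving $\widehat{W}_\epsilon(k)=f(\epsilon)\epsilon^d g(\epsilon k)$, the lower bound on $\min_{k\neq 0}\widehat{W}_\epsilon(k)$ from \eqref{eq:ftlb}, the second-order expansion (the paper expands the cosine explicitly, you Taylor-expand $g$ using $\nabla g(0)=0$) to get $\delta_*\lesssim f(\epsilon)\epsilon^{d+2}$, and then the exponent bookkeeping feeding into \thref{thm:summary}\ref{p3}. The only cosmetic differences are that you choose an explicitly distinct triple $k^a=k^b+k^c$ (the paper's $\set{k^*,2k^*}$ implicitly relies on $-k^*\in K^\delta$ by evenness) and that your admissible $\gamma$ sits just above $\tfrac{\ell+d}{\ell+d+2}$ rather than at the paper's $\tfrac{\ell+d}{m+d+2}$; both are below $\tfrac12$ under the stated constraints.
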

\begin{proof}
We proceed by checking that the conditions of~\thref{thm:summary}\ref{p3} hold for this class of potentials. We first check that for $\epsilon$ small enough, $W_{\epsilon}$ has at least one negative Fourier mode. Let $V:= \supp W$ and $V_\epsilon:=\supp W_\epsilon$. We have for
$k \in \Z^d$,
\begin{align}
\widehat{W}_\epsilon(k)& = L^{-d/2}\intT{W_\epsilon(x) e^{i\frac{2 \pi  k \cdot x}{L}}} \\
&= L^{-d/2}f(\epsilon)\int_{V^\epsilon} W(x/\epsilon) e^{i\frac{2 \pi  k \cdot x}{L}} \dx{x} \\
&= L^{-d/2}f(\epsilon) \epsilon^d \int_{V} W(x) e^{i\frac{2 \pi \epsilon  k \cdot x}{L}} \dx{x} \, \label{eq:est1} .
\end{align}
Since $ e^{i\frac{2 \pi \epsilon  k \cdot x}{L}}\to 1 $ uniformly  on $V$ as $\epsilon \to 0$, it follows that eventually $\widehat{W}_\epsilon(k)<0$
for $\epsilon$ sufficiently small since $\int_{V} W(x) e^{i\frac{2 \pi \epsilon  k \cdot x}{L}} \dx{x} \to \bra*{\intT{W}} <0 $. Using~\eqref{eq:ftlb}
and~\eqref{eq:est1}, we can now obtain the following bound
\begin{align}
\min\limits_{k \in \Z^d, k \neq 0} \widehat{W}_\epsilon(k) \geq -C f(\epsilon) \epsilon^d \, .
\end{align}
Since $W$ is even along every coordinate we have that
\begin{align} 
\intT{W(x) e^{i\frac{2 \pi \epsilon  k \cdot x}{L}}} &= \intT{W(x) \cos\bra*{\frac{2 \pi \epsilon  k \cdot x}{L}}} \\
&=\intT{W(x)\bra*{1 + \bra*{\frac{2 \pi |k|\epsilon x}{L}}^2 + O(\epsilon^4)}}
\end{align}
Fix some $k^* \in \Z^d$.
The above expansion tells us that we can find some $\epsilon_1$ sufficiently small and some $C_1>0$ independent of $\epsilon$ such that
\begin{align}
\widehat{W}_\epsilon(k^*) , \widehat{W}_\epsilon(2k^*)  \leq f(\epsilon) \epsilon^d (-C +C_1 \epsilon^2 ) \quad \text{ for all } \epsilon<\epsilon_1 \, .
\end{align}
  We can thus obtain the following bound
\begin{align}
-C f(\epsilon) \epsilon^d \leq \min\limits_{k \in \Z^d, k \neq 0} \widehat{W}_\epsilon(k) \leq f(\epsilon) \epsilon^d (-C +C_1 \epsilon^2 )
 \quad \text{ for all } \epsilon<\epsilon_1 \, . \label{eq:minb}
\end{align}
Combining the two of them we derive
\begin{align}
\begin{drcases}
\widehat{W}_\epsilon(k^*)-\min\limits_{k \in \Z^d, k \neq 0} \widehat{W}_\epsilon(k)
\leq C_1 f(\epsilon) \epsilon^{2+d}  \\
\widehat{W}_\epsilon(2k^*)-\min\limits_{k \in \Z^d, k \neq 0} \widehat{W}_\epsilon(k)
\leq C_1 f(\epsilon) \epsilon^{2+d} 
\end{drcases}
  \quad\text{ for all } \epsilon<\epsilon_1 \, ,
\end{align}
which tells us that $k^*,2k^* \in K^{C_1 f(\epsilon) \epsilon^{2+d}}$ and that $\delta_* \leq C_1 f(\epsilon) \epsilon^{2+d}$. Thus, $\delta_*  \lesssim  \epsilon^{m+d+2}$ and since $m >-d-2$, $\delta_* \to 0$ as $\epsilon \to 0$. Furthermore, using~\eqref{eq:minb} we can deduce
\begin{align}
\abs*{\min\limits_{k \in \Z^d, k \neq 0}\widehat{W}_\epsilon(k)} &\geq 
f(\epsilon) \epsilon^d (C - C_1 \epsilon^2 ) \geq C_2 \epsilon^{\ell +d}\, .
\end{align}
The fact that $\ell \geq -d$ tells us that
\begin{align}
 \abs*{\min\limits_{k \in \Z^d, k \neq 0} \widehat{W}_\epsilon(k)}\geq C_3 \delta_*^{\frac{\ell+d}{m+d+2}} \,.
\end{align}
We now use the assumption that $l <\frac{m-d}{2} +1 $ and apply~\thref{thm:summary}\ref{p3}, to obtain the desired result.
\end{proof}
The choice $f(\eps)= \eps^{-d}$ is an admissible scaling for the function $f$ in~\thref{cor:lm}.
Now that we have a set of concrete conditions under which we can expect there to be two distinct minimisers at a particular parameter value, we can try to apply the mountain pass theorem. To apply~\thref{thm:mpwi}, it is sufficient to show that $\mu^L$ is a strict local minima at parameter values $\beta_c$ and that this property is uniform, i.e.  we can find a ball $B_r^{W_2}(\mu^L)$ around $\mu^L$ in $W_2$ such that $I(\mu) \geq I(\mu^L) + \delta$ for all $\mu \in \partial B_r^{W_2}(\mu^L)$ and some $\delta>0$. 
In order to show this, we need the following comparison of $W_2$ with the homogeneous negative Sobolev space $\SobH^{-1}(\T)$, which we identify with all formal Fourier series of $\mu$ as defined in~\eqref{e:FT:W} given by $\sum_{k \in \Z^d \setminus \set{0}} \widehat\mu(k) e_k$ such that $\sum_{k \in \Z^d \setminus\set{0}}\frac{1}{|k|^2} \abs*{\widehat\mu(k)}^2 < \infty$. Note that the functions $\set{e_k}_{k \in \Z^d \setminus \set{0}}$ form an orthogonal basis for $\SobH^{-1}(\T)$ with respect to the inner product defined by duality with the homogeneous space $\SobH^{1}(\T)$. We have that $\skp{\mu,f}_{\SobH^{-1},\SobH^1}= \sum_{k \in \Z^d \setminus \set{0}} \widehat{\mu}(k) \widehat{f}(-k)$. Also, the inner product on $\SobH^1(\T)$ is defined as $\bra*{f,g}_{\SobH^1}=\sum_{k \in \Z^d \setminus\set{0}}|k|^2 \widehat{f}(k) \widehat{g}(-k)$. It is easy to check then that the Riesz representation of any $\mu \in \SobH^{-1}(\T)$ is given by $\sum_{k \in \Z^d \setminus{0}} \frac{1}{|k|^2}\widehat{\mu}(k) e_k \in \SobH^1(\T)$. 
\begin{lem}[Comparison of $\SobH^{-1}$ with $W_2$]\thlabel{lem:hwcomp}
Let $\mu_0, \mu_1 \in \cP(\T) \cap \Leb^\infty(\T)$. Then the following estimate
holds
\begin{align}
\norm{\mu_0 -\mu_1}_{\SobH^{-1}(\T)} \leq \bra*{\max \pra*{\norm{\mu_0}_{\Leb^\infty(\T)}, \norm{\mu_1}_{\Leb^\infty(\T)}}}^{1/2} W_2(\mu_0,\mu_1)
\end{align}
\end{lem}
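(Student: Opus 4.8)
My plan is to combine the Kantorovich dual characterisation of $\SobH^{-1}(\T)$ with displacement interpolation along the Wasserstein geodesic joining $\mu_0$ and $\mu_1$. Put $M := \max\set{\norm{\mu_0}_{\Leb^\infty(\T)}, \norm{\mu_1}_{\Leb^\infty(\T)}}$. Since $\mu_1 - \mu_0$ has zero total mass, its $\SobH^{-1}$--norm is obtained by testing against mean-zero smooth functions,
\[
\norm{\mu_0 - \mu_1}_{\SobH^{-1}(\T)} = \sup\set*{\int_\T f \,\dx{(\mu_1 - \mu_0)} : f \in C^\infty(\T),\ \int_\T f \,\dx{x} = 0,\ \norm{f}_{\SobH^1(\T)} \le 1} ,
\]
so it suffices to prove $\int_\T f \,\dx{(\mu_1 - \mu_0)} \le M^{1/2}\,\norm{f}_{\SobH^1(\T)}\,W_2(\mu_0, \mu_1)$ for every such $f$. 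As $\mu_0 \ll \dx{x}$, \thref{thm:monge} supplies a Kantorovich potential $\phi$ and the optimal map $F(x) = \exp_x\bra{-\nabla \phi(x)}$ pushing $\mu_0$ onto $\mu_1$; let $F_s(x) = \exp_x\bra{-s \nabla \phi(x)}$ be the optimal interpolant of \thref{def:opip} and $\mu_s := (F_s)_\# \mu_0$, which is absolutely continuous for every $s \in [0,1]$ by \thref{lem:oiprop}. Write $\rho_s$ for its density, so $\rho_0 = \dx{\mu_0}/\dx{x}$ and $\rho_1 = \dx{\mu_1}/\dx{x}$.

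First I would rewrite the pairing as an integral along these geodesics. For each $x$ the curve $s \mapsto F_s(x)$ is the minimising constant-speed geodesic from $x$ to $F(x)$, so $\abs{\dot F_s(x)} = d_\T(x, F(x))$ for all $s$ and $\mu_0$-a.e.\ $x$; writing $f(F(x)) - f(x) = \int_0^1 \skp{\nabla f(F_s(x)),\, \dot F_s(x)} \,\dx{s}$ and integrating in $x$ against $\mu_0$,
\[
\int_\T f \,\dx{(\mu_1 - \mu_0)} = \int_0^1 \int_\T \skp{\nabla f(F_s(x)),\, \dot F_s(x)} \,\dx{\mu_0(x)}\,\dx{s} .
\]
Estimating the integrand by $\abs{\nabla f(F_s(x))}\, d_\T(x, F(x))$ and applying Cauchy--Schwarz in $\Leb^2(\mu_0)$ in the variable $x$, for each fixed $s$,
\[
\int_\T \abs{\nabla f(F_s(x))}\, d_\T(x, F(x)) \,\dx{\mu_0(x)} \le \bra*{\int_\T \abs{\nabla f(F_s(x))}^2 \,\dx{\mu_0(x)}}^{1/2} \bra*{\int_\T d_\T(x, F(x))^2 \,\dx{\mu_0(x)}}^{1/2} .
\]
The last factor equals $W_2(\mu_0, \mu_1)$ by \thref{thm:monge}; since $\mu_s = (F_s)_\# \mu_0$, the change of variables $y = F_s(x)$ gives $\int_\T \abs{\nabla f(F_s(x))}^2 \,\dx{\mu_0(x)} = \int_\T \abs{\nabla f}^2 \rho_s \,\dx{x} \le \norm{\rho_s}_{\Leb^\infty(\T)}\,\norm{f}_{\SobH^1(\T)}^2$.

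It remains to establish the \emph{density maximum principle} $\norm{\rho_s}_{\Leb^\infty(\T)} \le M$ for all $s \in [0,1]$, which is the heart of the matter. Granting it, the last two displays yield $\int_\T f \,\dx{(\mu_1 - \mu_0)} \le \int_0^1 M^{1/2}\,\norm{f}_{\SobH^1(\T)}\,W_2(\mu_0, \mu_1) \,\dx{s} = M^{1/2}\,\norm{f}_{\SobH^1(\T)}\,W_2(\mu_0, \mu_1)$, and taking the supremum over admissible $f$ finishes the proof. For the maximum principle I would invoke the displacement convexity of internal energies on non-negatively curved manifolds (the flat torus has vanishing Ricci curvature): for every $p \ge 1$ the functional $\nu \mapsto \int_\T (\dx{\nu}/\dx{x})^p \,\dx{x}$ is convex along the geodesic $(\mu_s)_{s \in [0,1]}$ (cf.~\cite{cordero01}), hence $\norm{\rho_s}_{\Leb^p(\T)} \le \max\set{\norm{\rho_0}_{\Leb^p(\T)}, \norm{\rho_1}_{\Leb^p(\T)}}$ for all $p$, and letting $p \to \infty$ (legitimate because $\T$ has finite volume) gives $\norm{\rho_s}_{\Leb^\infty(\T)} \le M$. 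Alternatively one may argue by hand: lifting $F_s$ to the universal cover $\R^d$ one has $F_s = (1-s)\id + s\nabla \psi$ with $\psi$ convex and $\nabla \psi = F_1$, so Minkowski's determinant inequality gives $\det DF_s(x)^{1/d} \ge (1-s) + s\,\det D^2\psi(x)^{1/d} \ge \min\set{1,\, \det D^2\psi(x)^{1/d}}$, whence $\rho_s(F_s(x)) = \rho_0(x)/\det DF_s(x) \le \max\set{\rho_0(x),\, \rho_1(\nabla \psi(x))} \le M$ by the Monge--Amp\`ere identity $\rho_0(x) = \rho_1(\nabla \psi(x))\,\det D^2\psi(x)$.

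The only genuine obstacle is this density maximum principle along the Wasserstein geodesic; the rest is the $\SobH^{-1}$--$\SobH^1$ duality together with two applications of Cauchy--Schwarz. A minor technical point is to justify differentiating $s \mapsto f(F_s(x))$ and the ensuing change of variables, which reduce to the smoothness of $f$, the almost-everywhere differentiability of $c$-concave functions, and the absolute continuity of $\mu_s$ from \thref{lem:oiprop}.
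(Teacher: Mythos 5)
Your proof is correct, but it takes a genuinely different route from the paper's. The paper follows Loeper's Eulerian argument: it takes the Benamou--Brenier velocity field $v(t)$ for the optimal interpolant, solves the Poisson problems $\Delta\Psi_t=\mu(t)-L^{-d}$, differentiates in $t$ to get $\Delta\partial_t\Psi_t=-\nabla\cdot(\mu(t)v(t))$, and runs an energy estimate on $\nabla\Psi_t$, identifying $\Psi_t$ with the Riesz representative of $\mu(t)$ in $\SobH^1(\T)$. You instead work in Lagrangian/dual form: the $\SobH^{-1}$--$\SobH^1$ duality, the fundamental theorem of calculus along the geodesics $s\mapsto F_s(x)$, and Cauchy--Schwarz in $\Leb^2(\mu_0)$. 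Both arguments hinge on exactly the same key fact --- the density maximum principle $\norm{\rho_s}_{\Leb^\infty}\le\max\set{\norm{\rho_0}_{\Leb^\infty},\norm{\rho_1}_{\Leb^\infty}}$ along the displacement interpolant --- which the paper simply cites from \cite[Corollary 17.19]{villani2008optimal}, whereas you supply two self-contained proofs (displacement convexity of $\int\rho^p$ with $p\to\infty$, or Minkowski's determinant inequality via the Monge--Amp\`ere equation); either is valid on the flat torus. Your version buys you freedom from justifying the existence and integrability of the Benamou--Brenier velocity and the regularity of $\partial_t\Psi_t$, at the price of the (routine, and correctly flagged) almost-everywhere differentiability of the Kantorovich potential and the change of variables under $F_s$; the paper's version stays entirely at the level of measures and weak solutions of elliptic equations. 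Both generalise identically, and the constants agree.
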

\begin{proof}
The proof follows the argument in~\cite[Proposition 2.1]{loeper06}. Let $\mu(\cdot)$ be the optimal interpolant between $\mu_0$ and $\mu_1$ from~\thref{thm:monge}. Then by the Benamou--Brenier formulation of the optimal transport problem, there exists a vector field $[0,1]\ni t \mapsto v(t) \in \Leb^2(\mu(t); \R^d)$ such that the pair $(\mu(t),v(t))$ satisfies
\begin{align}
\partial_t \mu + \nabla \cdot(\mu v) =0 \,, \quad\text{for } t\in [0,1],
\end{align} 
in the sense of distributions. Now, we consider the sequence of parameterised problems given by
\begin{align}
\Delta \Psi_{t} =\mu(t) - L^{-d} \, \quad\text{for } t\in [0,1] .
\end{align}
Note that $\norm{\mu(t)}_{\Leb^\infty(\T)} \leq \max\set*{\norm{\mu_0}_{\Leb^\infty(\T)}, \norm{\mu_1}_{\Leb^\infty(\T)}}$~\cite[Corollary 17.19]{villani2008optimal}, and thus the above equation has  a unique weak solution in $\SobH^{1}(\T)$ for all $t \in [0,1]$. We know that
$\intT{\abs{v(t)}^2 \mu(t)}= W_2^2(\mu(t),\mu_0)=t^2 W_2^2(\mu_0,\mu_1) < \infty$. From this it follows that $\mu(t) v(t) \in \Leb^2(\T; \R^d)$ and thus $\nabla \cdot(\mu(t) v(t)) \in \SobH^{-1}(\T)$. Differentiating with respect to $t$ we have
\begin{align}
\Delta \partial_t \Psi_t = - \nabla \cdot(\mu(t) v(t)) \, .
\end{align}
It follows then that $\partial_t \Psi_t \in \SobH^1(\T)$. Multiplying by $\partial_t \Psi_t$ and integrating by parts with respect to the space variable and then integrating with respect to time, we obtain
\begin{align}
\norm{\nabla \Psi_1 -\nabla \Psi_0 }_{\Leb^2(\T)} &\leq \norm{\mu(t)}_{\infty}^{1/2}
W_2(\mu_0,\mu_1) \\
&\leq  \bra*{\max \pra*{\norm{\mu_0}_{\Leb^\infty(\T)}, \norm{\mu_1}_{\Leb^\infty(\T)}}}^{1/2} W_2(\mu_0,\mu_1)
\end{align} 
Since $\Psi_t$ is precisely the Riesz representation of $\mu(t)$ in $\SobH^{1}(\T)$, the claimed estimate holds.
\end{proof} 
\begin{rem}\thlabel{rem:hwcomp1}
For $d=1$, we remark that the $\SobH^{-1}(\mathbb{T})$-norm and $W_2(\cdot,\cdot)$-distance are comparable in both directions. Indeed, from the Kantorovich--Rubinstein dual formulation of the $W_1$ distance, we have that
\begin{align}
W_1(\mu,\nu)& = \sup_{\Lip(\varphi)\leq 1} \int_{\mathbb{T}} \varphi \dx{(\mu-\nu)} \\
& \leq L^{1/2}\sup_{ \norm{\varphi}_{\SobH^1(\mathbb{T})} \leq 1 } \int_{\mathbb{T}} \varphi \dx{(\mu-\nu)} =L^{1/2} \norm{\mu-\nu}_{\SobH^{-1}(\mathbb{T})} \, .
\end{align}
 Furthermore, since $\mathbb{T}$ is compact, we have that $W_2(\mu,\nu) \leq C \sqrt{W_1(\mu,\nu)}$. Thus, we have that
 \begin{align}
W_2(\mu,\nu) \leq C_1 \norm{\mu-\nu}_{\SobH^{-1}(\mathbb{T})}^{1/2} \, .
 \end{align}
 Furthermore, by using~\cite[Lemma 4.1 (ii) and (iii)]{MM13} we obtain that for $d=1$,
 \begin{align}
\norm{\mu-\nu}_{\SobH^{-1}(\mathbb{T})} \leq C_2 \sqrt{W_2(\mu,\nu)} \, .
 \end{align}
Thus, for $d=1$, the argument in the proof~\thref{lem:hwcomp} is not needed.
\end{rem}
The following lemma establishes the strictness of local minima in $W_2$ for discontinuous transition points.
\begin{lem}~\thlabel{lem:locmin}
Assume $W \in C^2(\T)$ with $\beta_c>0$ a discontinuous transition point. Then,  for $\beta \leq \beta_c$, the measure $\mu^L =L^{-d} \dx{x}$ is a strict local minimum of $I$. 
\end{lem}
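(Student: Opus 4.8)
The case $\beta<\beta_c$ is immediate: by \thref{defn:tp}(1) the uniform measure $\mu^L$ is then the unique minimiser of $I$ on all of $\cP(\T)$, hence \emph{a fortiori} the unique minimiser of $I$ on every ball $B_r^{W_2}(\mu^L)$. So the real content is the borderline value $\beta=\beta_c$, which I would treat in two steps: first a sharp coercivity estimate for measures whose density barely exceeds $L^{-d}$, then an upgrade to a genuine $W_2$-neighbourhood statement by a compactness argument on constrained minimisers.

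\emph{Step 1 (sharp inequality near $\mu^L$ in the sup norm).} For $\mu\ll\dx{x}$ with bounded density, set $\nu=\mu-\mu^L$ and $f=L^{d}\tfrac{\dx{\mu}}{\dx{x}}$. Since $\int\nu=0$ and $W\star\mu^L$ is constant, the interaction cross terms vanish, and expanding the entropy as $\beta^{-1}(\Ent(\mu)-\Ent(\mu^L))=\beta^{-1}L^{-d}\intT{g(f)}$ with $g(t)=t\log t-t+1\geq0$ one obtains
\[
I(\mu)-I(\mu^L)=\beta^{-1}L^{-d}\intT{g(f)}+\tfrac{L^{d/2}}{2}\sum_{k\neq0}\widehat{W}(k)\,\abs{\widehat\nu(k)}^2.
\]
Because $\beta_c$ is a transition point one has $w_*:=\abs{\min_{k\neq0}\widehat{W}(k)}>0$ (otherwise $\mu^L$ would minimise $I$ for every $\beta$), so by Parseval the interaction sum is $\geq-\tfrac{L^{d/2}w_*}{2}\norm{\nu}_{L^2(\T)}^2$. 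On the other hand $g(t)\geq\tfrac{1}{2(1+\delta)}(t-1)^2$ for all $t\in[0,1+\delta]$ (elementary: $\log t\leq t-1$ when $t\leq1$, and $g''(t)=1/t\geq(1+\delta)^{-1}$ when $t\in[1,1+\delta]$), hence whenever the density of $\mu$ is at most $(1+\delta)L^{-d}$,
\[
I(\mu)-I(\mu^L)\geq\tfrac{L^{d/2}}{2}\Bigl(\tfrac{L^{d/2}}{\beta(1+\delta)}-w_*\Bigr)\norm{\nu}_{L^2(\T)}^2.
\]
By \thref{thm:summary}\ref{p21} a discontinuous transition point satisfies $\beta_c<L^{d/2}/w_*$, so for $\beta\leq\beta_c$ one can fix $\delta>0$ small enough that the bracket is a positive constant; thus $I(\mu)>I(\mu^L)$ for every $\mu\neq\mu^L$ whose density is $\leq(1+\delta)L^{-d}$.

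\emph{Step 2 (upgrading to a $W_2$-ball).} Assume $\mu^L$ is not a strict local minimum; then along some $r_n\downarrow0$ there are $\mu_n\neq\mu^L$ minimising $I$ over $\overline{B}_{r_n}^{W_2}(\mu^L)$ with $I(\mu_n)\leq I(\mu^L)$ (minimisers exist since $\cP(\T)$ is $W_2$-compact and $I$ is l.s.c., and necessarily $\mu_n\ll\dx{x}$, since $I(\mu_n)\le I(\mu^L)<\infty$ forces finite entropy). If $\mu_n$ is interior to the ball it is an unconstrained critical point, so $\mu_n=Z_n^{-1}e^{-\beta W\star\mu_n}$; since $\norm{W\star\mu_n-W\star\mu^L}_{L^\infty}\leq\Lip(W)\,W_2(\mu_n,\mu^L)\to0$, normalisation forces $\mu_n\to\mu^L$ in $L^\infty$, so its density is $\leq(1+\delta)L^{-d}$ for large $n$ and Step 1 gives $I(\mu_n)>I(\mu^L)$, a contradiction. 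Otherwise $W_2(\mu_n,\mu^L)=r_n$, and $\mu_n$ satisfies the constrained Euler--Lagrange equation $\mu_n=Z_n^{-1}e^{-\beta(W\star\mu_n+\Lambda_n\varphi_n)}$ with a multiplier $\Lambda_n\geq0$ for the constraint and $\varphi_n$ a Kantorovich potential for the transport of $\mu_n$ onto $\mu^L$. Using that $\mu^L$ is a global minimiser of $I$ at $\beta_c$ and that $I$ is $\lambda$-geodesically convex (\thref{def:lconvex}), integrating along the geodesic (\thref{prop:geodesic}) from $\mu^L$ shows $I\geq I(\mu^L)-\tfrac{|\lambda|}{2}r_n^2$ on $\overline{B}_{r_n}^{W_2}(\mu^L)$; feeding this into the first-order optimality condition along the geodesic joining $\mu_n$ to $\mu^L$ pins $\Lambda_n\leq|\lambda|/2$ uniformly in $n$. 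Since the torus $\T$ admits a Poincaré inequality and $c$-concave functions on it are uniformly Lipschitz, the potentials equidistribute, $\osc\varphi_n\to0$, as $\mu_n\to\mu^L$ in $W_2$. Consequently $W\star\mu_n+\Lambda_n\varphi_n$ is constant up to an $L^\infty$-error that tends to $0$, so once again normalisation yields $\mu_n\to\mu^L$ in $L^\infty$, the density of $\mu_n$ is $\leq(1+\delta)L^{-d}$ for large $n$, and Step 1 produces the contradiction $I(\mu_n)>I(\mu^L)$.

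The hard part is the boundary alternative of Step 2: establishing the uniform bound on the Lagrange multiplier $\Lambda_n$ and the equidistribution $\osc\varphi_n\to0$ of the optimal Kantorovich potential as $\mu_n\to\mu^L$, so that the Euler--Lagrange equation can be bootstrapped to $L^\infty$-closeness of $\mu_n$ to $\mu^L$; once that regularity is in hand everything reduces to the elementary sharp inequality of Step 1 and the $W_2$-compactness of $\cP(\T)$.
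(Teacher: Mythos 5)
Your Step 1 is correct and is a genuinely more elementary route than the paper's: the paper obtains local uniqueness of the critical point via the inverse function theorem applied to the fixed-point map $T(\mu)=\mu-F(\mu)$ on $\SobH^{-1}(\T)$, whereas you get a quantitative coercivity bound directly from the quadratic lower bound on $g(t)=t\log t-t+1$ and Parseval, using the same key input $\beta_c<L^{d/2}/w_*$ from \thref{thm:summary}\ref{p21}. Both arguments then face the same obstacle — passing from ``density uniformly close to $L^{-d}$'' to an honest $W_2$-neighbourhood — and both resolve it with the comparison of $\SobH^{-1}$ (or $\Leb^2$) smallness against $W_2$ smallness for measures with controlled densities.

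The gap is in the boundary alternative of Step 2, and you have correctly identified it as the weak point: the ``constrained Euler--Lagrange equation'' $\mu_n=Z_n^{-1}e^{-\beta(W\star\mu_n+\Lambda_n\varphi_n)}$ is asserted, not proved. A Lagrange multiplier rule for minimisation of $I$ subject to $W_2(\mu,\mu^L)=r_n$ is not available off the shelf: the functional $\mu\mapsto W_2^2(\mu,\mu^L)$ is only super-/sub-differentiable, the constraint set is not a manifold, and a constraint qualification would have to be verified. The subsequent claims — the uniform bound $\Lambda_n\leq\abs{\lambda}/2$ and the decay $\osc\varphi_n\to 0$ — each depend on that equation (the Poincar\'e argument for $\osc\varphi_n\to 0$ needs the density of $\mu_n$ bounded below, which you would only get \emph{from} the Euler--Lagrange equation, so the reasoning threatens to be circular). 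Fortunately the entire boundary case is unnecessary. For $\beta\leq\beta_c$ the measure $\mu^L$ is a \emph{global} minimiser of $I$ over $\cP(\T)$ (\thref{defn:tp}), so any competitor $\mu\neq\mu^L$ with $I(\mu)\leq I(\mu^L)=\min_{\cP(\T)}I$ is itself a global minimiser and therefore satisfies the \emph{unconstrained} fixed-point equation $\mu=Z^{-1}e^{-\beta W\star\mu}$ (this is exactly the reduction the paper makes: it only ever has to rule out nearby solutions of $T(\mu)=0$). Your interior-case argument then applies verbatim: $\norm{W\star\mu-W\star\mu^L}_{\Leb^\infty}\leq\Lip(W)\,W_2(\mu,\mu^L)$, so $\mu\to\mu^L$ in $\Leb^\infty$ as $W_2(\mu,\mu^L)\to 0$, the density falls below $(1+\delta)L^{-d}$, and Step 1 yields the contradiction. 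With that replacement your proof is complete and, in my view, cleaner than invoking the inverse function theorem.
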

\begin{proof}
By the definition of $\beta_c$ from~\thref{defn:tp}, we have that for $\beta\leq \beta_c$, $\mu^L$ is a minimiser of~$I$. The proof for $\beta<\beta_c$ is obvious. The idea of the proof is based on the fact that any minimiser of the free energy must be a solution of $T(\mu)=\mu- F(\mu)=0$ (cf.~\cite[Proposition 2.4]{CGPS18}), where $F : \SobH^{-1}(\T) \to \SobH^{-1}(\T)$ is the map given by
\begin{align}
F(\mu)= \exp\bra*{-\beta W \star \mu - \log \intT{\exp(-\beta W \star \mu)}} \, .
\end{align}
By the result of~\thref{thm:summary}\ref{p1}, all minimisers are smooth, and hence it is sufficient to consider the fixed point map $F$ on the space $\SobH^{-1}(\T)$. Note that here we identify a measure $\mu \in \cP(\T)$ with the the formal Fourier series $\sum_{k \in \Z^d \setminus \set{0}} \widehat{\mu}(k) e_k$.
It is possible to check now that, for $\beta \leq\beta_c$, $D T(\mu^L): \SobH^{-1}(\T) \to \SobH^{-1}(\T)$ is a bounded, linear, isomorphism. Indeed, we have that
\begin{align}
D T(\mu^L) (\eta) = \eta - \beta \mu^L \bra*{W \star \eta}  - \beta \mu^L \int_{\T} {W \star \eta}\dx{\mu^L}
\end{align} 
The above operator is bounded on $\SobH^{-1}(\T)$ since $W \in C^2(\T) \subset H^1(\T)$. 
Diagonalising $D T(\mu^L)$ using $\set{e_k}_{k \in \Z^d \setminus \set {0}}$, we obtain
\begin{align}
DT(\mu^L) e_k= \bra[\big]{1- \beta L^{-d/2} \widehat{W}(k)} e_{-k} \, .
\end{align}
It follows  that if $\beta \leq L^{d/2}/ \min_{k \in \Z^d \setminus \set{0}}\widehat{W}(k)$, then the above map is a bijection. That it is an injection is clear from the fact that if $DT(\mu^L) \eta_1 =DT(\mu^L) \eta_2$ then $\widehat{\eta_1}(k)=\widehat{\eta_2}(k)$ for all $k \in \Z^d \setminus \set{0}$. It is also surjective since for any $\eta \in \SobH^{-1}(\T)$, we have that $\sum_{k \in \Z^d \setminus \set{0}} \frac{\widehat{\eta}(k)}{1- \beta L^{-d/2} \widehat{W}(-k)}e_{-k} $ maps to $\eta$
under $D T(\mu^L)$. We know from~\thref{thm:summary}\ref{p21} that $\beta_c$ is lesser than this value and hence the result.  Now, by the inverse function theorem, there exists for some $\varepsilon>0$ an $\varepsilon$-open ball $B_{\varepsilon}^{\SobH^{-1}}(\mu^L)$ around $\mu^L$ in $\SobH^{-1}(\T)$ such that it is the unique solution of $T(\mu)=0$ in this ball. This tells us that $\mu^L$ is the unique minimiser of the free energy in $B_\varepsilon^{\SobH^{-1}}(\mu^L)$ at $\beta=\beta_c$. Note further that we have the following bounds for all $\mu \in B_{\varepsilon}^{\SobH^{-1}}(\mu^L)$
\begin{align}
\mu^L \exp\bra*{-2 \beta \norm{W}_{\SobH^{1}(\T)} \norm{\mu}_{\SobH^{-1}(\T)}}\leq F(\mu) \leq  \mu^L \exp\bra*{2 \beta \norm{W}_{\SobH^{1}(\T)} \norm{\mu}_{\SobH^{-1}(\T)}}  \, .
\end{align}
Additionally we have that $\norm{\mu-\mu^L}_{\SobH^{-1}(\T)} < \varepsilon$ from which it follows that
\begin{align}
\frac{\mu^L}{C} \leq F(\mu) \leq  C \, \mu^L  \qquad\text{with }\quad C:= \exp\bra*{2 \beta \norm{W}_{\SobH^{1}(\T)} (\norm{\mu^L}_{\SobH^{-1}(\T)} + \varepsilon)} \,,
\end{align}
for all $\mu \in B_\varepsilon^{\SobH^{-1}}(\mu^L)$.  Consider the set
\begin{align}
\cI:=\set*{\mu \in \SobH^{-1}(\T) \cap \cP(\T) \cap \Leb^{\infty}(\T) : \frac{\mu^L}{C} \leq \mu \leq C \mu^L } \,.
\end{align}
Then, any minimiser of $I$ must lie in $\cI$ by construction. Additionally, for all
$\mu \in \cI$ we have from \thref{lem:hwcomp} for some fixed constant $C_0=C_0(\mu^L,C)$ the bound
\begin{align}
\norm{\mu^L - \mu}_{\SobH^{-1}(\T)} \leq C_0 W_2(\mu^L,\mu) \, .
\end{align}
We can thus pick a ball $B_r^{W_2}(\mu^L)$ with $r>0$ sufficiently small such that $\norm{\mu-\mu^L}_{\SobH^{-1}(\T)}< \varepsilon$ for all $\mu \in B_r^{W_2}(\mu^L) \cap \cI $. Since all minimizers lie in $\cI$, it thus follows that we can find a ball in $W_2$ for which $\mu^L$ is the unique minimiser of $I$. Thus, $\mu^L$ is a strict local minima in $\cP(\T)$ equipped with the Wasserstein metric. 

The boundary of the ball $\partial B_r^{W_2}(\mu^L)$ is a compact set in $(\cP(\T),W_2)$, since $\T$ is compact. Hence, the l.s.c. functional $I$ attains its minimiser on this set, say $\mu^*$. Setting $\delta= I(\mu^*)-I(\mu^L)>0$ concludes the estimate in the lemma.
\end{proof}
We can now prove the existence of a mountain pass point in the presence of a discontinuous phase transition.
\begin{cor} \thlabel{cor:mpf}
Assume $W \in C^2(\T)$ with $\beta_c>0$ a discontinuous transition point, i.e.~there exist at least two distinct minimisers of $I$ at $\beta=\beta_c$ such that one is $\mu^L$ and the other is some $\bar{\mu} \in \cP(\T)$. It follows then that there exists a $\mu^* \in \cP(\T)$ distinct from $\mu^L$ and $\bar{\mu}$ such that $\abs{\partial I}(\mu^*)=\abs{d I}(\mu^*)=0$. Additionally,
$I(\mu^*)=c$ with 
\begin{align}
c= \inf\limits_{\gamma \in \Gamma} \sup_{t \in[0,1]} I(\gamma(t)) \, ,
\end{align}
where $\Gamma= \set{C([0,1]; \cP(\T)): \gamma(0)=\mu^L, \gamma(1)=\bar{\mu}}$.
\end{cor}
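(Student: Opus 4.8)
The plan is to apply~\thref{intro:mpf} directly with the choices $\mu=\mu^L$ and $\nu=\bar\mu$. Since $\beta_c$ is a discontinuous transition point, \thref{thm:summary}\ref{p21} guarantees that at $\beta=\beta_c$ the free energy $I$ admits at least two distinct minimisers, namely $\mu^L$ and some $\bar\mu\in\cP(\T)$; in particular both are global minimisers, so $I(\bar\mu)=I(\mu^L)$ and a fortiori $I(\bar\mu)\leq I(\mu^L)$. Moreover, \thref{lem:locmin}, applied at $\beta=\beta_c$, shows that $\mu^L$ is a \emph{strict} local minimum of $I$ in $(\cP(\T),W_2)$. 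Thus the hypotheses of~\thref{intro:mpf} are satisfied, and we obtain a point $\mu^*\in\cP(\T)$, distinct from $\mu^L$ and $\bar\mu$, with $\abs{\partial I}(\mu^*)=\abs{dI}(\mu^*)=0$ and $I(\mu^*)=c$, where $c=\inf_{\gamma\in\Gamma}\sup_{t\in[0,1]}I(\gamma(t))$ and $\Gamma$ is the set of continuous curves in $\cP(\T)$ joining $\mu^L$ to $\bar\mu$. This is exactly the assertion of the corollary.

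For completeness one should record why $\mu^*$ is genuinely distinct from the two endpoints: inspecting the proof of~\thref{intro:mpf} together with~\thref{lem:locmin}, the strictness of the local minimum yields a radius $r>0$ and a gap $\delta=\delta(r)>0$ with $I\geq I(\mu^L)+\delta$ on $\partial B^{W_2}_r(\mu^L)$, and since every $\gamma\in\Gamma$ must cross $\partial B^{W_2}_r(\mu^L)$ once $r<W_2(\mu^L,\bar\mu)$, one gets $c\geq I(\mu^L)+\delta > I(\mu^L)=I(\bar\mu)=\max\set{I(\mu^L),I(\bar\mu)}$. As $I(\mu^*)=c$ strictly exceeds the common value $I(\mu^L)=I(\bar\mu)$, necessarily $\mu^*\neq\mu^L$ and $\mu^*\neq\bar\mu$. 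The coincidence $\abs{\partial I}(\mu^*)=\abs{dI}(\mu^*)$ is already built into~\thref{intro:mpf}, coming from~\thref{prop:equivslope} on the equivalence of the weak and strong metric slopes for $\lambda$-geodesically convex functionals; recall that $\lambda$-geodesic convexity of $I$ holds here because $\norm{D^2 W}_{\Leb^\infty(\T)}<\infty$, and that $(\mathrm{MPS})$ is automatic since $(\cP(\T),W_2)$ is compact.

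The substantive content of the corollary lies entirely in the earlier results: the abstract mountain pass theorem~\thref{thm:mpwi} and its specialisation~\thref{intro:mpf}, together with~\thref{lem:locmin}, whose proof, via the inverse function theorem applied to the fixed-point map $F$ at $\mu^L$ and the comparison between $\SobH^{-1}(\T)$ and $W_2$ from~\thref{lem:hwcomp}, is where the real work happens. Given those inputs, the corollary is an immediate instantiation and I do not anticipate any further obstacle; the only point requiring a moment's care is the distinctness of $\mu^*$ from the endpoints $\mu^L$ and $\bar\mu$, handled by the strict-inequality argument above.
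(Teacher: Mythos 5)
Your proof is correct and follows exactly the paper's route: the corollary is proved by applying \thref{intro:mpf} with $\mu=\mu^L$, $\nu=\bar\mu$, where strictness of the local minimum at $\mu^L$ is supplied by \thref{lem:locmin} and $I(\bar\mu)=I(\mu^L)$ holds since both are global minimisers at the discontinuous transition point. Your additional remarks on the distinctness of $\mu^*$ and the slope equivalence are consistent with (and slightly more explicit than) the paper's one-line proof.
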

\begin{proof}
 We can directly apply~\thref{intro:mpf}, once we show that $\mu^L$ is a strict local minimum of $I$, which is established in~\thref{lem:locmin}. Thus, the result follows.
\end{proof}
\begin{rem}\thlabel{rem:whydc}
We have chosen to apply~\thref{intro:mpf} at a discontinuous transition point $\beta_c$ because we know from~\thref{lem:locmin} that $\mu^L$ is a strict local minimum. Furthermore, since $\beta_c>0$ is discontinuous, we know that $\min_{\cP(\T)}I(\mu)=I(\mu^L)=I(\bar{\mu})$. However, we expect, by the continuity of the minimum value of the free energy in $\beta$ (cf.~\cite[Proposition 2.4]{chayes2010mckean}), that the result of~\thref{intro:mpf} holds in a small neighbourhood of the critical value $\beta_c$. 

The situation is different at a continuous transition point $\beta_c>0$, where by~\thref{defn:tp}, the minimisers are unique at $\beta=\beta_c$. In this situation the mountain pass argument of~\thref{intro:mpf} can only be applied either by finding another measure $\mu \in D(I)$ (possibly a critical point) such the barrier value between $\mu^L$ and $\mu$ exceeds the maximum of their energies or by showing that for $\beta>\beta_c$ we can find measures satisfying the assumptions of~\thref{intro:mpf}. In principle, this may be possible for specific choices of $W$, but proving the existence of new critical points is usually non-constructive (cf.~\cite[Theorems 5.11 and 5.19]{CGPS18}). Thus, extracting any information about the value of their free energy is a challenging problem.
\end{rem}
We turn now, to the large deviations principle of the underlying particle system and the study of escape probabilities.
We start by stating, without proof, the reformulated version of the main result from~\cite{DG1987}. We also refer the reader to~\cite[Example 9.35, Section 13.3]{FengKurtz2006} for a discussion and proof of such large deviations principles for weakly interacting diffusions.
\begin{thm}~\thlabel{thm:dg}
Let $\cP^{(N)}(\T)$ be the space of empirical probability measures on $\T$, that is
\[
\cP^{(N)}(\T):= \set*{\mu \in \cP(\T): \mu= \frac{1}{N}\sum_{i=1}^N \delta_{x_i}, x_i \in \T} \, .
\] 
Assume that $\mu^{(N)}_0 \in \cP^{(N)}(\T)$ is such that there exists $\mu_0 \in \cP(\T)$ with $W_2(\mu^{(N)}_0,\mu_0) \to 0$ as $N \to \infty$.  Denote by $\cC_T$
the space $C([0,T]; \cP(\T))$, equipped with the topology of uniform convergence.
\begin{tenumerate}
\item For all open subsets $G$ of $\cC_T$ holds \label{dg1}
\[
\liminf_{N \to \infty} N^{-1} \log \mathbb{P}\bra*{\mu^{(N)}(\cdot) \in G , \mu^{(N)}(0)=\mu^{(N)}_0} \geq -\inf\limits_{\mu(\cdot) \in G, \mu(0)=\mu_0} S(\mu(\cdot)) \, .
\]
\item For all closed subsets $F$ of $\cC_T$ holds \label{dg2}
\[
\limsup_{N \to \infty} N^{-1} \log \mathbb{P}\bra*{\mu^{(N)}(\cdot) \in F , \mu^{(N)}(0)=\mu^{(N)}_0} \leq -\inf\limits_{\mu(\cdot) \in F, \mu(0)=\mu_0} S(\mu(\cdot)) \, ,
\]
\item For each compact subset $K$ of $\cP(\T)$ and $s \geq 0$ is the set
\[
\Phi_K(s)= \set*{\mu(\cdot) \in \cC_T: S(\mu(\cdot)) \leq s, \mu(0) \in K} \, ,
\]
compact.
\end{tenumerate}
Here $S: \cC_T \to \R \cup \set{+ \infty}$ is the action or rate functional 
given for $\mu \in AC^2([0,T]; \cP(\T))$ by
\begin{align}
S(\mu(\cdot)):= \frac{1}{4} \int_0^T \norm{\partial_t \mu - \nabla \cdot (\mu \nabla \bra{\beta^{-1} \log \mu +  W \star \mu})}_{\SobH^{-1}(\T,\mu)}^2 \dx{t} \, .
\end{align}
and by $+\infty$ otherwise.
\end{thm}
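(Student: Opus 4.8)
I would establish~\thref{thm:dg} along the lines of Dawson and G\"artner~\cite{DG1987} (see also~\cite{FengKurtz2006}): lift the problem from $\cC_T$ to the space $\cP\bra{C([0,T];\T)}$ of probability measures on path space, where the mean-field interaction is removed by a change of measure, prove the LDP there, and project back to $\cC_T$ by the contraction principle. Write $\Lambda^{(N)} := N^{-1}\sum_{i=1}^N \delta_{X^i_{[0,T]}} \in \cP\bra{C([0,T];\T)}$ for the empirical measure of trajectories and let $\Theta : \cP\bra{C([0,T];\T)} \to \cC_T$ be the continuous push-forward map $Q \mapsto \bra{t \mapsto (e_t)_\# Q}$, with $e_t$ evaluation at time $t$, so that $\mu^{(N)}(\cdot) = \Theta(\Lambda^{(N)})$.

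First I would fix as reference the law $\mathbb W^{\otimes N}$ under which the particles are independent $\T$-valued diffusions with generator $\beta^{-1}\Delta$ started from the prescribed points $\mu^{(N)}_0$. Since the drift felt by particle $i$ is $-\nabla\bra{W\star\mu^{(N)}_t}(X^i_t)$, Girsanov's theorem gives $\frac{\dx{\bP^N}}{\dx{\mathbb W^{\otimes N}}}(X) = \exp\bra[\big]{N\,G(\Lambda^{(N)})}$ for a functional $G$ whose stochastic-integral part is recast, via the It\^o--Wentzell formula and the smoothness of $W\star\mu^Q_t$, into boundary and time-integral terms, making $G$ bounded and weakly continuous. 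A (generalised) Sanov theorem, using $W_2(\mu^{(N)}_0,\mu_0)\to 0$, yields a good LDP for $\Lambda^{(N)}$ under $\mathbb W^{\otimes N}$ with rate $Q\mapsto \Ent\bra{Q\,|\,\mathbb W_{\mu_0}}$ (finite only if $(e_0)_\#Q=\mu_0$), and Varadhan's lemma --- valid once suitable superexponential tail bounds on $G$ are in place --- upgrades this, under $\bP^N$, to a good LDP with rate $\widehat S(Q) = \Ent\bra[\big]{Q\,|\,\mathbb M^{\mu^Q}}$, where $\mathbb M^{\mu}$ is the law of the single diffusion with generator $\beta^{-1}\Delta - \nabla(W\star\mu_t)\cdot\nabla$ started from $\mu_0$; the infimum subtracted in Varadhan's lemma vanishes, being attained at the McKean--Vlasov flow. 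This $\widehat S$ is the Dawson--G\"artner rate functional on path space.

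Next I would apply the contraction principle to the continuous map $\Theta$, obtaining a good LDP for $\mu^{(N)}(\cdot)=\Theta(\Lambda^{(N)})$ on $\cC_T$ with rate $S(\mu(\cdot)) = \inf\set[\big]{\widehat S(Q) : (e_t)_\#Q = \mu(t)\ \forall t}$; this gives~\ref{dg1} and~\ref{dg2} directly, while compactness of the sets $\Phi_K(s)$ follows since goodness of $\widehat S$ is preserved under the contraction and the sublevel sets are equicontinuous in time (uniform bound on the integrated metric speed). It remains to identify this infimum with the stated $\SobH^{-1}(\T,\mu)$-functional, which I would do in three steps: (i) the infimum over path laws with prescribed one-time marginals is attained at a \emph{Markovian} law --- ``Markovianising'' a competitor with fixed marginal flow does not increase the relative entropy, by the chain rule for $\Ent$ and convexity; (ii) for a Markovian law with current (continuity-equation) velocity $w$, $\widehat S(Q)$ equals, by Girsanov, a fixed multiple of the weighted kinetic energy $\int_0^T\!\int_{\T}\abs{w_t - w^0_t}^2\dx{\mu_t}\dx{t}$ of the deviation of $w$ from the McKean--Vlasov velocity $w^0 = -\nabla\bra{\beta^{-1}\log\mu + W\star\mu}$; (iii) minimising this energy over all $w$ with $\partial_t\mu = -\dive(\mu w)$, by a Hilbert-projection / Lagrange-multiplier argument whose minimiser is a gradient, produces exactly the weighted $\SobH^{-1}(\T,\mu)$-norm of $\partial_t\mu - \dive\bra[\big]{\mu\nabla(\beta^{-1}\log\mu + W\star\mu)}$, i.e. the functional $S$ of the statement (up to the normalisation built into that weighted norm).

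The hardest part is the rate-functional identification in (i)--(iii): the reduction to Markovian laws and the ensuing variational computation must be carried out on $AC^2([0,T];\cP(\T))$ with care where $\mu_t$ degenerates or fails to be smooth, using approximation and lower semicontinuity of the action, and one needs both inequalities --- the nontrivial direction being that the explicit functional dominates the infimum, which amounts to exhibiting the optimal Markovian competitor realising the prescribed flow with the minimal-energy velocity. The other delicate point is the justification of the Varadhan tilt, i.e. the superexponential control of $G$; this can instead be bypassed via the nonlinear-semigroup / Laplace-principle route of Feng and Kurtz~\cite{FengKurtz2006}, at the price of proving a comparison principle for the associated Hamilton--Jacobi equation on $\cP(\T)$.
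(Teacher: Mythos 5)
There is no proof of~\thref{thm:dg} in the paper to compare against: the theorem is quoted verbatim (in reformulated torus form) from Dawson and G\"artner~\cite{DG1987}, and the text explicitly says it is stated \emph{without proof}, pointing also to~\cite{FengKurtz2006}. So the only meaningful comparison is between your sketch and the cited literature. Your outline --- lift to the empirical measure on path space, remove the interaction by Girsanov, recast the stochastic-integral part of the exponent via It\^o so that the tilt is a bounded continuous functional of $\Lambda^{(N)}$, apply a generalised Sanov theorem and Varadhan's lemma, then contract along $\Theta$ and identify the contracted rate with the weighted $\SobH^{-1}$ action --- is a legitimate and by now standard route, but it is closer in spirit to later treatments (Budhiraja--Dupuis--Fischer-type weak-convergence arguments, or the Laplace-principle machinery of Feng--Kurtz) than to Dawson and G\"artner's original argument, which works directly on $C([0,T];\cP)$ via exponential martingales, local LDP estimates over finite collections of test functions, and a projective-limit construction of the rate functional, with the $\SobH^{-1}(\T,\mu)$ representation established afterwards as a separate identification lemma. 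Your route buys a cleaner conceptual decomposition (Sanov + tilt + contraction); the original buys avoidance of the path-space rate identification.

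As an outline your proposal is sound, but be aware that essentially all of the difficulty is concentrated in the steps you name and defer: the superexponential control needed for the Varadhan tilt (or, on your alternative route, the comparison principle for the Hamilton--Jacobi equation on $\cP(\T)$), the generalised Sanov theorem for non-identically distributed initial data converging only in $W_2$, and above all the identification (i)--(iii) of $\inf\set{\widehat S(Q): (e_t)_\# Q = \mu(t)}$ with the $\SobH^{-1}(\T,\mu)$ functional, where the Markovianisation step and the lower bound (showing the explicit functional dominates the infimum, i.e.\ constructing the optimal Markovian competitor for a general $AC^2$ flow that may degenerate) each require a careful approximation argument. None of this affects the present paper, which imports the result; but as a standalone proof your text is a roadmap rather than an argument.
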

We are interested in using the above result to understand the probability 
of the empirical process escaping from the uniform state $\mu^L$ and reaching the clustered state $\bar{\mu}$ in time $T>0$. 
\begin{thm}\thlabel{ldfinal}
Assume $W \in C^2(\T)$ with $\beta_c$ a discontinuous transition point, i.e. there exist at least two distinct minimisers of $I$ at $\beta=\beta_c$ such that one is $\mu^L$ and the other is some $\bar{\mu} \in \cP(\T)$. It follows then that the underlying empirical process $\mu^{(N)} \in \cC_T$ 
with initial i.i.d uniformly distributed particles satisfies
\begin{align}
\mathbb{P}(\mu^{N}(T) \in \overline{B}_\eps^{W_2}\bra{\bar{\mu}}, \mu^{(N)}(0)=\mu^{(N)}_0) \leq \exp\bra*{-N \bra[\big]{\Delta - O(\eps^2))}- o_T(1)}
\end{align}
for $N$ sufficiently large, where $\overline{B}_\eps^{W_2}\bra{\bar{\mu}}$ is the closed ball of size $\eps>0$ around $\bar{\mu}$, $\Delta:= I(\mu^*)-I(\mu^L)$, where $\mu^*$ is the critical point defined in~\thref{cor:mpf}. Here, $o_T(1)$ is a constant which vanishes as $N\to \infty$ with a rate depending on the time interval $T>0$.
\end{thm}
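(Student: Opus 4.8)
\emph{Strategy.} The plan is to run the $\cP(\T)$-analogue of the Freidlin--Wentzell computation from the introduction through the Dawson--G\"artner large deviations principle \thref{thm:dg}. First I would apply the upper bound \thref{thm:dg}\ref{dg2} to the set
\[
F := \set*{ \mu(\cdot) \in \cC_T : \mu(T) \in \overline{B}^{W_2}_\eps(\bar\mu) } ,
\]
which is closed in $\cC_T$ because $\mu(\cdot)\mapsto\mu(T)$ is continuous and $\overline{B}^{W_2}_\eps(\bar\mu)$ is $W_2$-closed in $\cP(\T)$. Since the initial particles are i.i.d.\ uniform on the compact space $\T$, the initial empirical measures satisfy $W_2(\mu^{(N)}_0,\mu^L)\to 0$ (law of large numbers for empirical measures), so \thref{thm:dg} applies with limiting datum $\mu_0=\mu^L$ and yields
\[
\limsup_{N\to\infty} N^{-1}\log \bP\bra*{ \mu^{(N)}(T)\in\overline{B}^{W_2}_\eps(\bar\mu),\ \mu^{(N)}(0)=\mu^{(N)}_0 } \ \leq\ -\inf\set*{ S(\mu(\cdot)) : \mu(\cdot)\in F,\ \mu(0)=\mu^L } .
\]
Everything then reduces to the deterministic lower bound $\inf\set{ S(\mu(\cdot)) : \mu(\cdot)\in F,\ \mu(0)=\mu^L } \geq \Delta - \tfrac{|\lambda|}{8}\eps^2$; the term $o_T(1)$ in the statement collects the sub-exponential correction implicit in the $\limsup$ together with the fact that $\mu^{(N)}_0$ only converges to $\mu^L$.

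\emph{Completing the square.} Next I would fix a curve $\mu(\cdot)\in F$ with $\mu(0)=\mu^L$ and $S(\mu(\cdot))<\infty$, so that $\mu\in AC^2([0,T];\cP(\T))$ and the metric slope of $I$ is square-integrable along $\mu$. Because $I$ is $\lambda$-geodesically convex (recall $\norm{D^2W}_{L^\infty(\T)}<\infty$), the chain rule for curves of maximal slope applies (cf.~\cite{ambrosio2008gradient}, and \cite{DS10} for the McKean--Vlasov energy): $t\mapsto I(\mu(t))$ is absolutely continuous with $\tfrac{\dx{}}{\dx{t}}I(\mu(t))=\skp{ v_t,\ \nabla\tfrac{\delta I}{\delta\mu}(\mu(t)) }_{L^2(\mu(t))}$ a.e., where $v_t$ is the minimal velocity of $\mu$. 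The integrand of $S$ equals $\norm{ v_t+\nabla\tfrac{\delta I}{\delta\mu}(\mu(t)) }^2_{L^2(\mu(t))}$, so completing the square on any subinterval $[0,t_0]$ gives
\[
\tfrac14\int_0^{t_0}\norm[\big]{ v_t+\nabla\tfrac{\delta I}{\delta\mu} }^2_{L^2(\mu(t))}\dx{t} = \tfrac14\int_0^{t_0}\norm[\big]{ v_t-\nabla\tfrac{\delta I}{\delta\mu} }^2_{L^2(\mu(t))}\dx{t} + I(\mu(t_0))-I(\mu^L) \ \geq\ I(\mu(t_0))-I(\mu^L) .
\]
Since the integrand of $S$ is non-negative, $S(\mu(\cdot))\geq I(\mu(t_0))-I(\mu^L)$ for every $t_0\in[0,T]$, and taking the supremum over $t_0$ gives $S(\mu(\cdot))\geq \sup_{t\in[0,T]}I(\mu(t))-I(\mu^L)$.

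\emph{Identifying the barrier with the mountain pass value.} It then remains to bound $\sup_{t\in[0,T]}I(\mu(t))$ below by $c=I(\mu^*)$ from \thref{cor:mpf}. Reparametrising $\mu$ onto $[0,1]$ (which does not change the supremum) and concatenating it with a unit-speed minimising $W_2$-geodesic $\sigma$ from $\mu(T)$ to $\bar\mu$ (\thref{prop:geodesic}) produces $\tilde\gamma\in\Gamma=\set{ \gamma\in C([0,1];\cP(\T)) : \gamma(0)=\mu^L,\ \gamma(1)=\bar\mu }$. Along $\sigma$, $\lambda$-geodesic convexity (\thref{def:lconvex}) gives $I(\sigma(s))\leq\max\set{ I(\mu(T)),I(\bar\mu) }+\tfrac{|\lambda|}{8}W_2^2(\mu(T),\bar\mu)$; using $I(\bar\mu)=I(\mu^L)=\min_{\cP(\T)}I\leq\sup_t I(\mu(t))$, $I(\mu(T))\leq\sup_t I(\mu(t))$ and $W_2(\mu(T),\bar\mu)\leq\eps$, this is at most $\sup_{t\in[0,T]}I(\mu(t))+\tfrac{|\lambda|}{8}\eps^2$. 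Hence $\Upsilon(\tilde\gamma)\leq\sup_{t\in[0,T]}I(\mu(t))+\tfrac{|\lambda|}{8}\eps^2$, while $\tilde\gamma\in\Gamma$ forces $\Upsilon(\tilde\gamma)\geq c=I(\mu^*)$. Therefore $\sup_{t\in[0,T]}I(\mu(t))\geq I(\mu^*)-\tfrac{|\lambda|}{8}\eps^2$ and, combined with the previous step, $S(\mu(\cdot))\geq I(\mu^*)-I(\mu^L)-\tfrac{|\lambda|}{8}\eps^2=\Delta-\tfrac{|\lambda|}{8}\eps^2$. Taking the infimum over admissible curves and feeding this into \thref{thm:dg}\ref{dg2} yields the claim, with $O(\eps^2)=\tfrac{|\lambda|}{8}\eps^2$.

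\emph{Main obstacle.} The mountain pass bookkeeping and the use of $\lambda$-convexity are routine; the delicate step will be making the completing-the-square argument rigorous, i.e.\ knowing that along a curve with $S(\mu(\cdot))<\infty$ the energy $t\mapsto I(\mu(t))$ is finite and absolutely continuous, and that the Otto-calculus identity writing the integrand of $S$ as $\norm{v_t+\nabla\tfrac{\delta I}{\delta\mu}}^2_{L^2(\mu(t))}$, together with the accompanying chain rule, is valid. This is exactly the point at which one leans on $\lambda$-convexity of $I$, the theory of curves of maximal slope, and the identification of $S$ with the metric slope $\abs{\partial I}$ from \cite{DS10,ambrosio2008gradient}. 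A secondary point to verify is that $\mu^{(N)}_0\to\mu^L$ in $W_2$ with enough uniformity to absorb the resulting error into $o_T(1)$.
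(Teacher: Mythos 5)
Your proposal is correct and follows essentially the same route as the paper: apply the Dawson--G\"artner upper bound to the closed set $F$, complete the square in the rate functional using the gradient-flow chain rule to get $S(\mu(\cdot))\geq\sup_{t}I(\mu(t))-I(\mu^L)$, and then relate that supremum to the mountain-pass value $c=I(\mu^*)$, using $\lambda$-geodesic convexity to control the error coming from the fact that the curve only ends in an $\eps$-ball around $\bar\mu$ rather than at $\bar\mu$ itself. The one place you diverge is the last step: the paper lifts everything to the epigraph, introduces $F_{\epi}$ and $\Gamma_{\epi}$, and runs a contradiction argument with an intermediate $\delta$, whereas you simply concatenate the reparametrised curve with a unit-speed geodesic from $\mu(T)$ to $\bar\mu$ and bound $\Upsilon$ of the concatenation directly via $\lambda$-convexity. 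Your version is cleaner (it avoids the epigraph detour entirely, needing only continuity of the concatenated curve in $W_2$ and lower semicontinuity of $I$ for $\Upsilon$ to make sense) and even yields the slightly better constant $\tfrac{|\lambda|}{8}\eps^2$ in place of the paper's $|\lambda|\eps^2$; both are of course $O(\eps^2)$. The delicate point you flag --- validity of the chain rule and the Otto-calculus rewriting of the integrand of $S$ along curves of finite action --- is exactly the point the paper also leans on, citing the same references.
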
 
\begin{proof}
In order to prove this result we need to relate the rate functional $S$ with the energy functional $I$. We can assume without loss of generality that $\mu \in AC^2([0,T];H^1(\T) \cap \cP(\T))$ since $S(\mu)= + \infty$ otherwise. It follows that there exists $\phi \in \Leb^2([0,T]; \SobH^1(\T, \mu))$ \cite[Theorem 5.14]{santam} such that
\begin{align}
\partial_t \mu =  \nabla \cdot(\mu \nabla \phi) \, ,
\end{align}
where the above equation is satisfied in $\SobH^{-1}(\T,\mu)$. Thus, for $\mu \in AC^2([0,T];H^1(\T) \cap \cP(\T))$ we can rewrite the rate functional, using the chain rule for gradient flows discussed in~\cite[Section 10.1.2 E., Lemma 8.1.2]{ambrosio2008gradient} (see also~\cite{FengNguyen2012}), as follows
\begin{align}
S(\mu)&= \frac{1}{4}  \int_0^T \norm*{\partial_t \mu - \nabla \cdot (\mu  \nabla \bra{\beta_c^{-1}\log \mu +  W \star \mu})}_{\SobH^{-1}(\T,\mu)}^2 \dx{t} \\
&= \frac{1}{4}  \int_0^T \norm*{\phi-  \bra*{\beta_c^{-1}\log \mu + W \star \mu}}_{\SobH^{1}(\T,\mu)}^2 \dx{t}  \\
&= \frac{1}{4}  \int_0^T \norm*{\phi+  \bra*{\beta_c^{-1}\log \mu + W \star \mu}}_{\SobH^{1}(\T,\mu)}^2 \dx{t}  \\
%&= \frac{1}{4}  \int_0^T \norm*{\phi}_{\SobH^1(\T,\mu)}^2 \dx{t} +
%\frac{1}{4}\int_0^T \norm*{\beta_c^{-1}\log \mu + W \star \mu}_{\SobH^1(\T,\mu)}^2 \dx{t} \\
&\qquad +\int_0^T \skp*{\beta_c^{-1}\log \mu + W \star \mu,\phi}_{\SobH^1(\T,\mu)} \dx{t}   \\
%&=\frac{1}{4}  \int_0^T \norm*{\phi}_{\SobH^1(\T,\mu)}^2 \dx{t} +
&=\frac{1}{4}  \int_0^T \norm*{\partial_t \mu + \nabla \cdot (\mu  \nabla \bra{\beta_c^{-1}\log \mu +  W \star \mu})}_{\SobH^{-1}(\T,\mu)}^2 \dx{t} \\
%\frac{1}{4}\int_0^T \norm*{\beta_c^{-1}\log \mu + W \star \mu}_{\SobH^1(\T,\mu)}^2 \dx{t} \\ 
&\qquad +\int_0^T \skp[\big]{(\beta_c^{-1}\log \mu + W \star \mu),\partial_t \mu}_{\SobH^1(\T,\mu),\SobH^{-1}(\T,\mu)} \dx{t} \,.
\end{align}
We choose the closed subset $F=  \set{\mu \in \cC_T:\mu(T) \in \overline{B}_\eps^{W_2}\bra{\bar{\mu}}, \mu(0)=\mu^L}$ and we set $T^* = \arg \max_{t \in [0,T]} \bra[\big]{I(\mu(t))-I(\mu^L)}$ if it is uniquely defined or pick any one if it is not. We can then rewrite the rate functional as follows
\begin{align}
S(\mu)&= \frac{1}{4}   \int_0^T \norm*{\partial_t \mu + \nabla \cdot (\mu  \nabla \bra{\beta_c^{-1}\log \mu +  W \star \mu})}_{\SobH^{-1}(\T,\mu)}^2 \dx{t}v\\ 
&\qquad +\int_0^{T^*} \skp[\big]{(\beta_c^{-1}\log \mu + W \star \mu),\partial_t \mu}_{\SobH^1(\T,\mu),\SobH^{-1}(\T,\mu)} \dx{t}  \\
&\qquad + \frac{1}{4}  \int_{T^*}^T \norm*{\partial_t \mu - \nabla \cdot (\mu \nabla \bra{\beta_c^{-1}\log \mu +  W \star \mu})}_{\SobH^{-1}(\T,\mu)}^2 \dx{t} \\
& \geq  \int_0^{T^*} \skp[\big]{(\beta_c^{-1}\log \mu + W \star \mu),\partial_t \mu}_{\SobH^1(\T,\mu),\SobH^{-1}(\T,\mu)} \dx{t} \\
&= \max_{t \in [0,T]} (I(\mu(t))-I(\mu^L)) \, .
\end{align}
Note that we have again applied the chain rule for gradient flows from~\cite[Section 10.1.2 E]{ambrosio2008gradient}. The estimate implies the lower bound
\[ 
  \inf_{\mu \in F}S(\mu) \geq \inf_{\mu \in F \cap AC^2}\max_{t \in [0,T]} \bra[\big]{I(\mu(t))-I(\mu^L)} \,.
\]
At this point, we cannot apply \thref{intro:mpf} directly, since $F$ contains curves with varying endpoints not necessarily critical points. To handle this case, we define
\begin{align*}
F_{\epi}= \Bigl\{(\mu(\cdot),\xi(\cdot)) \in C([0,T]; \epi(I)): &(\mu(0), \xi(0))=(\mu^L,I(\mu^L)),\\
&(\mu(T), \xi(T)) \in \bigcup\nolimits_{\mu \in \overline{B}_\eps^{W_2}\bra{\bar{\mu}} }(\mu, I(\mu)) \Bigr\}
\end{align*}
If $\mu \in F \cap AC^2$, then the function $t \mapsto I(\mu(t))$ is absolutely continuous by \cite[Section 10.1.2 E.]{ambrosio2008gradient}. Thus, the curve $(\mu(\cdot),I(\mu(\cdot)))$ lies in the set $F_{\epi}$  with $\max_{t \in [0,T]} (I(\mu(t))-I(\mu^L))= \max_{t \in [0,T]} (\cG_I(\mu(t), I(\mu(t)))-\cG_I(\mu^L, I(\mu^L)))$. Thus, we have that
\[
\inf_{\mu \in F \cap AC^2}\max_{t \in [0,T]} \bra[\big]{I(\mu(t))-I(\mu^L)} \geq \inf_{\mu \in F_{\epi}}\max_{t \in [0,T]} \bra[\big]{\cG_I(\mu(t), \xi(t))-\cG_I(\mu^L, I(\mu^L)) }
\]
Now, we argue that if $\eps$ is small enough the above quantity can be made arbitrarily close to $\Delta$. For doing so, we define $\delta>0$ such that $\inf_{\mu \in F_{\epi}}\max_{t \in [0,T]} \bra[\big]{\cG_I(\mu(t), \xi(t))-\cG_I(\mu^L, I(\mu^L)) } = \Delta -2\delta$. Then, we find $(\tilde{\mu}(t), \xi(t)) \in F_{\epi}$ with $\max_{t \in [0,T]} \bra[\big]{\cG_I(\tilde{\mu}(t), \xi(t))-\cG_I(\mu^L, I(\mu^L)) }\leq \Delta-\delta$ from which it follows that $I(\tilde{\mu}(T))-I(\mu^L) \leq \Delta-\delta.$  
Let
\begin{align*} 
\Gamma_{\epi}=\Bigl\{ (\mu(\cdot),\xi(\cdot)) \in C([0,T];\epi(I)) : &(\mu(0),\xi(0)) = (\mu^L,I(\mu^L)), \\
&(\mu(T),\xi(T))=(\bar\mu, I(\bar\mu)) \Bigr\} \subset F_{\epi}\,
\end{align*}
We know that $\inf_{\mu \in \Gamma_{\epi}}\max_{t \in [0,T]} \bra[\big]{\cG_I(\mu(t), \xi(t))-\cG_I(\mu^L, I(\mu^L)) }=\Delta$. Thus, if we take any continuous curve $(\mu(s), \xi(s))$ in $\epi(I)$ from $(\tilde{\mu}(T), I(\tilde{\mu}(T)) )$ to $(\bar{\mu},I(\bar{\mu}))$ parametrised by $s \in [0,1]$, $\cG_I(\cdot,\cdot)$ must exceed or be equal to $I(\bar{\mu})+ \Delta$ at some $s \in [0,1]$. Indeed, if this would not be the case then we could concatenate $(\tilde{\mu}(t), \xi(t))$ and $(\mu(s),\xi(s))$ to obtain, after reparametrisation, a new curve $[0,1]\ni t \mapsto (\mu(t), \xi(t))$ in $\epi(I)$ from $(\mu^L,I(\mu^L))$ to $(\bar{\mu},I(\bar{\mu}))$ such that $\max_{t \in [0,1]} \cG_I(\mu(t), \xi(t)) < \Delta$, a contradiction, since this curve is also an element of~$\Gamma_{\epi}$. 

We pick the curve $(\mu(\cdot), I(\mu(\cdot))$ where $\mu \in C([0,1];\cP_2(M))$ is a unit speed minimizing geodesic between $\tilde{\mu}(T)$ and $\bar{\mu}$, as defined in~\thref{prop:geodesic}. Let $t'$ be the time at which $I(\mu(t'))$ exceeds $I(\bar{\mu}) + \Delta$. By $\lambda$-geodesic convexity of $I$ we have
\begin{align}
I(\bar{\mu}) + \Delta \leq I(\mu(t')) \leq  (1-t')I(\tilde{\mu}(T)) + t' I(\bar{\mu}) +\frac{\abs{\lambda}}{2}t'(1-t') \eps^2
\end{align}
Bounding $I(\tilde{\mu}(T))$ by $I(\bar{\mu}) + \Delta -\delta$, we obtain,
\begin{align}
I(\bar{\mu}) + \Delta &\leq I(\bar{\mu}) + (1- t')\Delta -(1-t')\delta +\frac{\abs{\lambda}}{2}t'(1-t') \eps^2 \\
& \leq I(\bar{\mu}) + \Delta -(1-t')\delta +\frac{\abs{\lambda}}{2}(1-t') \eps^2 \, .
\end{align}
From this it follows that
\[
\delta \leq \frac{\abs{\lambda}}{2} \eps^2 \, .
\]
Thus, we obtain
\[
\inf_{\mu \in F}S(\mu) \geq \inf_{\mu \in F_{\epi}}\max_{t \in [0,T]} \bra[\big]{\cG_I(\mu(t), \xi(t))-\cG_I(\mu^L, I(\mu^L)) } =\Delta-2 \delta  \geq \Delta - \abs{\lambda}\eps^2
\]
Finally, we can apply the result of~\thref{thm:dg}~\ref{dg2}, to obtain that
\begin{align}
\limsup_{N \to \infty} N^{-1} \log \mathbb{P}\bra*{\mu^{(N)}(\cdot) \in F , \mu^{(N)}(0)=\mu^{(N)}_0} &\leq -\inf\limits_{\mu(\cdot) \in F, \mu(0)=\mu^L} S(\mu(\cdot)) \leq  -\Delta + \abs{\lambda} \eps^2 \, ,
\end{align}
where we use that $W_2(\mu_0^{(N)}, \mu^L) \to 0$ as $N \to \infty$ is implied by the strong law of large numbers.
We set 
\begin{align}
a_N= N^{-1} \log \mathbb{P}\bra*{\mu^{(N)}(\cdot) \in F , \mu^{(N)}(0)=\mu^{(N)}_0} \, .
\end{align}
It follows that
\begin{align}
a_N \leq&\sup_{N_1 \geq N} a_{N_1} =  \bra*{\sup_{N_1 \geq N} a_{N_1} - \limsup_{N \to \infty} a_N}+\limsup_{N \to \infty} a_N 
 \leq C_{N,T} -\Delta + \abs{\lambda} \eps^2 \, ,
\end{align}
where $C_{N,T}=\bra*{\sup_{N_1 \geq N} a_{N-1} - \limsup_{N \to \infty} a_N}= o_T(1)$. Plugging in the expression for $a_N$, we obtain
\begin{align}
\mathbb{P}\bra*{\mu^{(N)}(\cdot) \in F , \mu^{(N)}(0)=\mu^{(N)}_0} &\leq e^{-N\bra[\big]{\Delta - \abs{\lambda}\eps^2} - o_T(1)} 
\end{align}
The result then follows from the above estimate and the definition of the set $F$.
\end{proof}
\begin{rem}
To obtain a result that is uniform in $T>0$ would require something stronger than the Dawson--G\"artner large deviations principle in~\thref{thm:dg}. The approach of quasi-potentials discussed in~\cite{DG1986,DG1989} may be the correct idea to use to obtain such a result. However, this would require much more information about the structure of the non-trivial minimiser and its basin of attraction. Since this is not the focus of this work, we refer to~\cite{Bashiri2020} for a first step in this direction for a particular choice of the interaction (and confining) potential. We hope to treat the general case in a future work.

Similarly, the $O(\eps^2)$ appearing in the exponent $\exp\bra[\big]{-N(\Delta- O(\eps^2))}$ can be removed if one can show that the minimiser $\bar{\mu}$ is a local basin of attraction for the McKean--Vlasov dynamics, i.e.  there exists some $\eps>0$ such that all measures in $\overline{B}_\eps^{W_2}(\bar{\mu})$ converge to $\bar{\mu}$
under the flow of the McKean--Vlasov PDE as $t \to \infty$. In this case we can choose the continuous curve between $\tilde{\mu}(T)$ and $\bar{\mu}$ (in the proof of~\thref{ldfinal}) to be the solution of the McKean--Vlasov PDE starting $\tilde{\mu}(T)$. This solution does not increase the energy and thus the $O(\eps^2)$ error from the $\lambda$-convexity argument will not appear in the exponent. Such a characterisation of $\bar{\mu}$ is expected under more specific assumptions on the potential~$W$.
\end{rem}
\subsection*{Acknowledgements}
The authors would like to thanks Jos\'e A. Carrillo and Greg Pavliotis for useful discussions during the course of this work. The authors would also like to thank the anonymous referees for their careful reading of the draft manuscript and their useful comments and suggestions.

\bibliographystyle{myalpha}
\bibliography{refs}

\end{document}